  \theoremstyle{plain}
  \newtheorem{proposition}{Proposition}[section]
  \newtheorem{theorem}[proposition]{Theorem}
  \newtheorem{lemma}[proposition]{Lemma}
  \newtheorem{corollary}[proposition]{Corollary}
	\declaretheorem[name=Theorem,qed={\tiny$\blacksquare$},numbered=no]{theorem*}
  \theoremstyle{remark}
  \theoremstyle{definition}
  \newtheorem{definition}[proposition]{Definition}
  \declaretheorem[ name=Example, style=definition, qed=$\triangle$, sharenumber=proposition ]{example}
  \declaretheorem[ name=Remark, style=definition, qed=$\triangledown$, sharenumber=proposition ]{remark}
  \numberwithin{equation}{section}
  \crefname{example}{Example}{Example}
  \Crefname{example}{Example}{Example}
  \crefname{equation}{}{}
  \Crefname{equation}{Equation}{Equations}
  \newcommand{\oversetEq}[1][ ]{
      \stackrel{\mathmakebox[\widthof{=}]{#1}}{=}
  }
  \newcommand{\inv}{^{-1}}
  \DeclareMathOperator{\id}{id}
  \newcommand{\qtextq}[1]{\quad\text{#1}\quad}
  \newcommand{\qandq}{\qtextq{and}}
  \newcommand{\To}{\Rightarrow}
  \newcommand{\Set}{\mathsf{Set}}
  \newcommand{\cat}[1][C]{{\mathcal{#1}}}
  \newcommand{\tensor}{\otimes}
  \newcommand{\tensUnit}{\mathbf{1}}
  \DeclareMathOperator{\End}{End}
  \DeclareMathOperator{\ev}{ev}
  \DeclareMathOperator{\coev}{coev}
  \newcommand{\sweedler}[1]{_{(#1)}}
  \newcommand{\op}{^{\textup{op}}}
  \newcommand{\counit}{\varepsilon}
  \NewDocumentCommand{\hmodM}{O{H}O{}O{}O{}}{\prescript{#4}{#1}{\mathcal{M}}_{#2}^{#3}}
\patchcmd{\@setaddresses}{\indent}{\noindent}{}{}
\patchcmd{\@setaddresses}{\indent}{\noindent}{}{}
\patchcmd{\@setaddresses}{\indent}{\noindent}{}{}
\patchcmd{\@setaddresses}{\indent}{\noindent}{}{}
\def\ot{\otimes}
\DeclareRobustCommand{\SkipTocEntry}[5]{}
\crefname{part}{\S}{\S\S}
\crefname{chapter}{\S}{\S\S}
\crefname{section}{\S}{\S\S}
\crefname{subsection}{\S}{\S\S}
\title{Everybody knows what a normal gabi-algebra is}
\author{Johannes Berger}
\author{Paolo Saracco}
\address{D\'epartement de Math\'ematique, Universit\'e Libre de Bruxelles, Boulevard du Triomphe, B-1050 Brussels, Belgium.}
\email{paolo.saracco@ulb.be}
\urladdr{\url{https://sites.google.com/view/paolo-saracco}}
\urladdr{\url{https://paolo.saracco.web.ulb.be}}
\author{Joost Vercruysse}
\address{D\'epartement de Math\'ematique, Universit\'e Libre de Bruxelles, Boulevard du Triomphe, B-1050 Brussels, Belgium.}
\email{joost.vercruysse@ulb.be}
\urladdr{\url{http://joost.vercruysse.web.ulb.be}}
\date{\today}
\subjclass[2020]{Primary 16T05, 16T15, 18D15, 18M05; Secondary 18D20}
\begin{document}

\allowdisplaybreaks

\begin{abstract}
Let $A$ be a $k$-algebra over a commutative ring $k$. By the renowned Tannaka-Kre{\u\i}n reconstruction, liftings of the monoidal structure from $\hmodM[k]$ to $\hmodM[A]$ correspond to bialgebra structures on $A$ and liftings of the closed monoidal structure correspond to Hopf algebra structures on $A$. In this paper, we determine conditions on $A$ that correspond to liftings of the closed structure alone, {i.e.} without considering the monoidal one, which lead to the notion of what we call a \emph{gabi-algebra}. 
First, we tackle the question from the general perspective of monads, then we focus on the set-theoretic and the linear setting.
Our main and most surprising result is that a \emph{normal gabi-algebra}, that is an algebra $A$ whose category of modules is (associative and unital normal) closed with closed forgetful functor to $\hmodM[k]$, is automatically a Hopf algebra (thus justifying our title).
\end{abstract}

\maketitle

\tableofcontents

\section{Introduction}

A well-known result in the theory of Hopf algebras states that there is a bijective correspondence between bialgebra structures on an algebra $A$ over a commutative ring $k$ and monoidal structures on its category of left modules $\hmodM[A]$ which are lifted along the forgetful functor $\omega \colon \hmodM[A] \to \hmodM[k]$, that is to say, monoidal structures on $\hmodM[A]$ for which $\omega$ is a strict monoidal functor (see \cite{Pareigis}). 
Moreover, the category of modules over a bialgebra $A$ is always a \emph{right closed monoidal} category, in the sense that for every left $A$-module $N$, the endofunctor $- \otimes N \colon \hmodM[A] \to \hmodM[A]$ has a right adjoint given by $\hmodM[A](.A \otimes .N,-)$, where $.A \otimes .N$ is an $A$-bimodule with respect to $x\cdot (y \otimes n) \cdot z = x_1yz \otimes x_2\cdot n$ for all $x,y,z\in A$ and $n\in N$ (see e.g.\ \cite{Schauenburg-Hopf}). In fact, it is also {\em left closed monoidal}, meaning that the endofunctor $N\ot-$ has a right adjoint given by $\hmodM[A](.N \otimes .A,-)$. Hence, $\hmodM[A]$ is always a \emph{closed monoidal} category. Remark, however, that the right closed structure (i.e. the ``right internal homs'') in $\hmodM[A]$ differs in general from that in $\hmodM[k]$. In fact, the forgetful functor $\omega$ lifts not only the monoidal, but the full right closed monoidal structure if and only if also $\hmodM[k](N,-)$ (endowed with some $A$-module structure) provides a right adjoint to $- \otimes N \colon \hmodM[A] \to \hmodM[A]$. If this is the case, then the two right adjoints are related via the canonical isomorphism induced by $.A \otimes N \to .A \otimes .N,\ x \otimes n \mapsto x_1 \otimes x_2\cdot n$. For $N = A$, this map is exactly the so-called {\em Galois} or {\em fusion} map of the bialgebra $A$, whose bijectivity corresponds to the existence of an antipode for $A$. This makes it clear why lifting the right closed monoidal structure is equivalent to having a Hopf algebra structure on $A$. Furthermore, a similar argument tells that the left closed structure is lifted along $\omega$ if and only if the bialgebra has an {\em opantipode}. Thus, $A$ is a Hopf algebra with bijective antipode if and only if both left and right closure lift along $\omega$. 

The above observations form the starting point of the Tannaka-Kre{\u\i}n reconstruction theory for Hopf algebras (or quantum groups), where one aims at reconstructing algebraic structures and properties of (co)algebras from categorical structures and properties of their categories of (co)modules. Such reconstruction theorems have been widely studied at different levels of generality: see e.g.\ \cite{Schauenburg-Tannaka,Ulbrich} for the case of Hopf algebras, \cite{Majid,Saracco-Tannaka} for the case of coquasi-bialgebras (with preantipode), \cite{Schauenburg-bialgebroids,Schauenburg-Hopf} for bialgebroids and Hopf algebroids, \cite{BFVV} for oplax Hopf algebras, \cite{BLV-hopf_monads} for Hopf monads and \cite{Ver} for an overview. 
However, any one of the aforementioned cases follows the foregoing structure, where first a bialgebra-type structure is obtained from lifting the monoidal structure and then additional Hopf-like structures arise from closeness or rigidity of this monoidal structure.

In this paper, we change perspective and we focus on closed structures as starting point. Indeed, closed structures on a category can be defined independently of monoidal ones: see for instance
\cite{Eilenberg-Kelly,Street-skew_closed,UVZ_Eilenberg-Kelly_reloaded}. In a nutshell, a
closed category is a category $\cat$ together with a distinguished object $\tensUnit$ and
a bifunctor $[-,-]\colon \cat\op \times \cat \to \cat$, which can be understood as the
prototype of internal homs, satisfying suitable compatibilities. From this perspective, we may equally look at $\hmodM[k]$ as a closed
category for which the endofunctor $\hmodM[k](V,-)\colon \hmodM[k] \to \hmodM[k]$ has a
left adjoint for every object $V$. Therefore, it is natural to wonder what we can say
about an algebra $A$ for which the category $\hmodM[A]$ of left $A$-modules is closed
(without necessarily being monoidal) in such a way that the forgetful functor
$\omega\colon \hmodM[A] \to \hmodM[k]$ is a closed functor, and only after analyse the case in which also the monoidal structure is lifted.

Despite being a perfectly natural and relevant question, the closed side of reconstruction theory seems to have been neglected in the literature so far. This paper is aimed at filling this apparent gap by studying algebras $A$ over the commutative ring $k$ for which the forgetful functor $\omega$ lifts the closed structure but not necessarily the monoidal one. We will mainly consider two different levels of strictness: skew- and (unital and associative) normal-closed structures.
In case the endofunctors $[X,-]$ on a closed category $\cat$ have left adjoint, which we then suggestively denote as $-\ot X$, then the closed structure is skew-closed if and only if the associated monoidal product $\ot$ is skew-monoidal (see \cite[\S2]{Szlachanyi-skew-monoidal}). Unital and associative normality of the closed structure then correspond respectively to (strong) unitality and associativity of the monoidal product. An important difference between monoidal and closed structures is that the normality conditions can no longer be expressed as {\em internal} properties in the category $\cat$, but they need to be expressed {\em externally} in the category of sets (or in any category over which $\cat$ is enriched, if one considers other than Set-enriched categories). This is a first aspect that makes the theory of lifting closed structures different from the usual Tannaka theory. Indeed, when lifting a monoidal product together with the associativity and unitality constraints along a faithful functor $\omega$, then the lifted structure is strong monoidal whenever the initial monoidal structure was. In the closed case, where normality is an external property, the normality of the lifted structure is an additional requirement.

It turns out that, for a given $k$-algebra $A$, lifting the skew-closed structure from $\hmodM[k]$ to $\hmodM[A]$ corresponds to the existence of algebra maps $\delta \colon A \to A \otimes A^{\mathrm{op}}$ and $\varepsilon \colon A \to k$ satisfying appropriate conditions. Namely,

\begin{theorem*}[\cref{prop:gabi_algebra_defining_properties}]
	Let $A$ be an algebra over a commutative ring $k$. Then the closed structure of $\hmodM[k]$ lifts to a skew-closed structure on $\hmodM[A]$ if and only if:
	\begin{enumerate}[leftmargin=0.8cm]
		\item there exists an algebra map $\counit \colon A \to k$ and
		\item there exists an algebra map $\delta \colon A \to A \tensor A\op$, $\delta(a) = a_+ \tensor a_-$ (summation understood), 
	\end{enumerate}
	such that for all $a \in A$
	\begin{gather*}
		a_+ \counit(a_-) = a \\
		a_+ a_- = \counit(a) 1_A \\
		a_{++} \tensor a_{-+} \tensor a_{--} a_{+-} = a_+ \tensor a_- \tensor 1
	\end{gather*}
	In this case, $A$ acts on $\hmodM[k](M, V)$ as $(a . f)(m) = a_+ f (a_- m)$.
\end{theorem*}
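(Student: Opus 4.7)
The plan is to decode what a lift of the closed structure on $\hmodM[k]$ to a skew-closed structure on $\hmodM[A]$ amounts to, piece by piece: the lifted unit, the lifted internal-hom bifunctor, and the three Eilenberg--Kelly structure morphisms $i$, $j$, and $L$ that must all lift to $A$-linear morphisms.

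First I would treat the unit object. Since the forgetful $\omega\colon\hmodM[A]\to\hmodM[k]$ strictly lifts the closed structure, the lifted unit $I$ must have underlying $k$-module $k$, and a left $A$-module structure on $k$ is exactly an algebra map $\counit\colon A\to k$. Next, for every $M,N\in\hmodM[A]$, the lifted internal hom $[M,N]$ has underlying $k$-module $\hmodM[k](M,N)$, with some $A$-action natural in $M$ and $N$. The crucial step is to pin down this action. Given $m\in M$ and $f\in\hmodM[k](M,N)$, naturality along the $A$-linear map $A\to M$, $x\mapsto xm$, reduces the computation of $(a.f)(m)$ to $(a.g)(1_A)$ for a single $g\colon A\to N$ in $\hmodM[k]$. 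A further naturality argument in the second variable, along the $A$-linear maps $\id_A\tensor g\colon A\tensor A\to A\tensor N$ and $\phi_N\colon A\tensor N\to N$, $x\tensor n\mapsto xn$ (with $A\tensor(-)$ regarded as a left $A$-module via its first factor), reduces everything to the single universal $k$-linear map $\rho\colon A\to A\tensor A$, $x\mapsto 1\tensor x$. Setting $\delta(a):=(a.\rho)(1_A)=:a_+\tensor a_-\in A\tensor A$ then yields the announced formula $(a.f)(m)=a_+f(a_-m)$; requiring this to be an honest left $A$-action is equivalent to $\delta$ being a unital algebra map $A\to A\tensor A\op$.

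With the action formula in hand, the three defining relations come directly from the $A$-linearity of the Eilenberg--Kelly structure maps of the closed category on $\hmodM[k]$, lifted along $\omega$. The $A$-linearity of $i_M\colon[I,M]\to M$, $f\mapsto f(1_k)$, unfolds to $a_+\counit(a_-)\cdot m = a\cdot m$ for every $m\in M$ and $M\in\hmodM[A]$, i.e.\ relation~(1). The $A$-linearity of $j_X\colon I\to[X,X]$, $1_k\mapsto\id_X$, unfolds to $a_+a_-\cdot x=\counit(a)\cdot x$, i.e.\ relation~(2). For the $A$-linearity of the composition $L=L^{A}_{B,C}\colon[B,C]\to[[A,B],[A,C]]$, $f\mapsto(g\mapsto f\circ g)$, the action formula must be applied twice (once on the target $[A,C]$ and once on the source $[A,B]$): one finds $(a.L(f))(g)(x)=a_{++}f(a_{-+}g(a_{--}a_{+-}x))$, to be compared with $L(a.f)(g)(x)=a_+f(a_-g(x))$; letting $f$, $g$, and $x$ vary over free modules of sufficient size (a Yoneda-type extraction along the three independent slots) yields precisely relation~(3).

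For the converse, given $\counit$ and $\delta$ satisfying (1)--(3), I would take $I:=(k,\counit)$ and equip $\hmodM[k](M,N)$ with the $A$-action $(a.f)(m):=a_+f(a_-m)$ to define $[M,N]$. Multiplicativity of $\delta$ in $A\tensor A\op$ guarantees this is a well-defined bifunctor $(\hmodM[A])\op\times\hmodM[A]\to\hmodM[A]$, relations (1)--(3) make $i$, $j$, and $L$ lift to $A$-linear morphisms, and the skew-closed axioms then descend for free via the faithfulness of $\omega$ from the axioms already known to hold in $\hmodM[k]$. The main obstacle, in my view, is the representability-style reduction in the second paragraph: extracting the specific form $(a.f)(m)=a_+f(a_-m)$, with $\delta(a)\in A\tensor A\op$, out of mere naturality of the $A$-action on $\hmodM[k](M,N)$ in both variables. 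Once this formula is in place, the three correspondences $(1)\leftrightarrow i$, $(2)\leftrightarrow j$, $(3)\leftrightarrow L$ are direct, if slightly fiddly, computations in Sweedler-style notation.
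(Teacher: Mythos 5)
Your proposal is correct, but it takes a more hands-on route than the paper. The paper never argues directly in $\hmodM[A]$: it first characterises liftings of a skew-closed structure to an Eilenberg--Moore category in general, via a natural transformation $s_{X,Y}\colon T[TX,Y]\to[X,TY]$ subject to unitality, multiplicativity and compatibility with $i$, $j$, $\Gamma$ (\cref{eq:sufficient_condition_for_lifting_closed_structure_to_EM}, resting on the mixed-lifting theorem of \cref{appendix}), then transports these conditions to the mate $t_{X,Y}\colon L_YTX\to TL_{TY}X$ (\cref{prop:recasting_s_to_t}, \cref{rem:rephrasingt}), and finally specialises to $T=A\tensor-$ on $\hmodM[k]$, where $t$ is determined by $t_{k,k}$, i.e.\ by $\delta$. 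You instead decode the lift directly: the unit must be $(k,\counit)$, the action on $\hmodM[k](M,N)$ is pinned down by naturality in both slots via a Yoneda-type reduction to the universal element $\rho\colon A\to A\tensor A$, $x\mapsto 1\tensor x$, and the three axioms are read off from the $A$-linearity of $i$, $j$, $\Gamma$. Your reduction is sound (note that $\rho$ itself need not be $A$-linear; only the maps you push forward along must be, and they are), and it is in fact the same mechanism by which the paper reduces $t_{M,N}$ to $t_{k,k}$ and by which \cref{rem:varia}(1) recovers $\delta$ as $(a.\id)(1_A\tensor 1_A)$. What each approach buys: yours is self-contained and shorter for this one theorem; the paper's detour through monads is what makes the same argument reusable verbatim for the set-theoretic case of \cref{ssec:gabi-algebras_in_set} and connects the construction to the antipode of a Hopf monad. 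One small point of care in your converse direction: besides relations (1)--(3) making $i$, $j$, $\Gamma$ equivariant, one should also note that $[u,v]$ is $A$-linear for $A$-linear $u,v$ (so that $[-,-]$ really lands in $\hmodM[A]$ as a bifunctor); this is an immediate check from the formula $(a.f)(m)=a_+f(a_-m)$, and is implicitly covered by your ``well-defined bifunctor'' remark.
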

We call an algebra satisfying the equivalent conditions of the above Theorem a {\em gabi-algebra}\footnote{We prefer to keep the origin of the name gabi-algebra somewhat mysterious. Being an algebra with the same additional structure maps as a bialgebra, although satisfying other compatibility conditions (in particular, a different coassocativity condition), one could think of it as  a ``{\bf g}eneralized {\bf a}ssociative {\bf bi}algebra''. Another hint, however, might be \cite{Boehm-private}, where the above axioms were given for the first time.} and we show how a quite unexpected source of examples is provided by certain one-sided Hopf algebras in the sense of \cite{Green-Nichols-Taft}, i.e.\ bialgebras with a morphism which is just a one-sided convolution inverse of the identity. In particular, modules over a one-sided Hopf algebra are skew-closed and monoidal in such a way that the forgetful functor is closed and monoidal, but they do not form a closed monoidal category.
Furthermore, inspired by the existing structure on the module category of a gabi-algebra and by the prevailing examples, we also wonder when a gabi-algebra is a (one-sided) Hopf algebra and we provide necessary and sufficient conditions in \cref{sec:gabi-Hopf}.
Our most striking result in this direction is the following, which justifies the title of the paper in view of \cite{everybody}.

\begin{theorem*}[\cref{cor:urka}]
Let $A$ be an algebra. Then there is a bijective correspondence between normal gabi-algebra structures on $A$ and Hopf algebra structures on $A$.
\end{theorem*}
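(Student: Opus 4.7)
The plan is to establish the correspondence by constructing explicit maps in both directions. For the direction from Hopf to normal gabi, given a Hopf algebra $(A,\mu,\eta,\Delta,\counit,S)$ I would set $\delta(a) := a\sweedler{1}\tensor S(a\sweedler{2})$ and keep the counit $\counit$. The three axioms of \cref{prop:gabi_algebra_defining_properties} then reduce to routine Sweedler-style calculations using the antipode and counit laws: the third one in particular unwinds, after applying coassociativity and the identity $S(a\sweedler{2})a\sweedler{3}=\counit(a\sweedler{2})1_A$, to $a\sweedler{1}\tensor S(a\sweedler{2})\tensor 1$. Normality should follow because the induced left-adjoint structure on $\hmodM[A]$ is the usual tensor product of modules coming from $\Delta$, which is already strong monoidal, and one only needs to check the associated Set-level unit and associator maps are bijective.

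The substantive direction is to recover a Hopf algebra from a normal gabi-algebra $A$. Here I would exploit that, by hypothesis, the closed structure of $\hmodM[k]$ lifts along $\omega$ to a normal closed structure on $\hmodM[A]$. Following the skew-closed framework of \cite{Street-skew_closed,UVZ_Eilenberg-Kelly_reloaded}, the normality assumption translates externally into the statement that the left adjoints $-\tensor X$ of the internal homs $[X,-]$ exist and assemble into a \emph{strong} monoidal structure on $\hmodM[A]$; because the closed structure is lifted along $\omega$, so are its adjoints, and hence this monoidal structure is itself lifted along $\omega$. Applying the classical Tannaka-Kre\u\i n reconstruction of \cite{Pareigis} to this lifted strong monoidal structure then yields a bialgebra $(\Delta,\counit)$ on $A$. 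Since we simultaneously have the lifted closed structure making $\omega$ a closed functor, the Galois-map argument recalled in the introduction upgrades the bialgebra to a Hopf algebra: the antipode $S$ is forced by bijectivity of the fusion map $a\tensor b\mapsto a\sweedler{1}\tensor a\sweedler{2}b$, and comparing with the lifted action $(a.f)(m)=a_+f(a_-m)$ identifies $\delta(a)=a\sweedler{1}\tensor S(a\sweedler{2})$.

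Once these two assignments are in place, mutual inversion is essentially automatic from the formulas $\delta(a)=a\sweedler{1}\tensor S(a\sweedler{2})$ and $\Delta(a)=a\sweedler{1}\tensor a\sweedler{2}$. The main obstacle I anticipate is the first step of the second direction: rigorously verifying that normality of the lifted \emph{closed} structure really does produce a \emph{strong} monoidal structure lifted along $\omega$, and not merely a skew-monoidal one. This is exactly the place where the external, Set-enriched nature of normality emphasised in the introduction does real work, and it is presumably the content of the preliminary results in \cref{sec:gabi-Hopf} on which this corollary rests.
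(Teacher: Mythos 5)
Your route is genuinely different from the paper's. The paper never invokes Tannaka reconstruction in its main line of argument: it proves in \cref{thm:assnorm} that associative normality of the lifted closed structure is equivalent to invertibility of the canonical map $\beta(a\tensor b)=a_+\tensor a_-b$, and then in \cref{prop:leftnorm} (resting on the tricocycloid criterion of \cref{prop:gabi_algebra_tricocycloid_sufficient_hopf}) that adding left normality forces $\Delta(a)=\beta^{-1}(a\tensor 1)$ to be a Hopf comultiplication with antipode $\sigma(a)=\counit(a_+)a_-$; right normality is automatic. \Cref{cor:urka} is then read off. Your reconstruction-flavoured strategy is in fact endorsed by the paper itself, but only in the Remark \emph{after} \cref{cor:urka}, as an a posteriori observation: what it buys is generality (it adapts to algebras in other closed monoidal base categories), while the paper's computational route buys explicit formulas for $\Delta$ and $S$ in terms of $\delta$. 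Your first direction (Hopf implies normal gabi) is fine and matches \cref{ex:hopf_algebras_are_gabi} together with \cref{prop:bijection_skew_closed_monoidal_structures}.

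The gap is exactly where you suspected it, and your proposed justification for that step is wrong as stated. From strict closedness of $\omega$ you only obtain, by adjunction, a canonical comparison $\omega(N)\tensor\omega(M)\to\omega(N\boxtimes M)$, i.e.\ a \emph{lax} monoidal structure on $\omega$; the underlying $k$-module of $N\boxtimes M=(A\odot M)\tensor_A N$ is not $N\tensor M$ on the nose, so ``the adjoints are lifted, hence the monoidal structure is lifted'' does not follow. What actually closes the gap (and is the content of the paper's final Remark) is this: once both $\hmodM[A]$ and $\hmodM[k]$ are normal closed monoidal and $\omega$ is strictly closed and essentially surjective on objects (every $k$-module carries the trivial $A$-action through $\counit$), the chain $[\omega(N\boxtimes M),\omega Z]=\omega[N\boxtimes M,Z]\cong\omega[N,[M,Z]]=[\omega N,[\omega M,\omega Z]]\cong[\omega N\tensor\omega M,\omega Z]$ together with enriched Yoneda shows that $\omega$ is \emph{strong} monoidal, after which Pareigis-style reconstruction and the Galois-map argument apply as you describe. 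Without that argument your second step is an assertion, not a proof; note also that the ``preliminary results in \cref{sec:gabi-Hopf}'' to which you defer do not contain it --- they are the $\beta$ and tricocycloid computations underpinning the paper's other, more explicit proof.
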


Concretely, our paper is organized as follows. In \cref{sec:closed} we recall the basics
of (skew-)closed and (skew-)monoidal categories and the duality between them.
\cref{sec:towardsGabi} is devoted to paving the way towards gabi-algebras. In \cref{ssec:EMcats} and \cref{ssec:closedmonsetting} we address the question of lifting a skew-closed structure in the monadic setting, from a skew-closed category $\cat$ to the Eilenberg-Moore category $\cat^T$ of algebras for a monad $T$ on $\cat$; they also contain our first main results, \cref{eq:sufficient_condition_for_lifting_closed_structure_to_EM} and \cref{prop:recasting_s_to_t}, providing necessary and sufficient conditions for the lifting. In \cref{ssec:gabi-algebras_in_set} we take advantage of the generality offered by the monadic perspective to fully answer the question in the set-theoretic setting and in \cref{ssec:gabi-algebras_in_kmod} we apply our machinery to tackle the linear setting, which is leading to the definition of gabi-algebras. With \cref{sec:gabialgebras_props_examples}, we finally introduce gabi-algebras, we study some elementary properties of them and we provide a few concrete examples. Finally, in \cref{sec:gabi-Hopf} we address the question of determining which additional conditions make a gabi-algebra into a Hopf algebra. There is also an appendix, \cref{appendix}, where we prove a lifting theorem for internal homs fairly more general than the corresponding part in \cref{eq:sufficient_condition_for_lifting_closed_structure_to_EM} and that represents, in fact, the core of the proof of the lifting property in \cref{eq:sufficient_condition_for_lifting_closed_structure_to_EM}.

As a matter of notation, $k$ denotes a fixed commutative ring (unless stated otherwise). If $A$ is a $k$-algebra, we denote by $1_A$ (or simply $1$) both its unit
and the $k$-linear map $k \to A$ giving the $k$-algebra structure. The multiplication of
$A$ might be denoted by $m$, $\cdot$ or simple juxtaposition. For a generic category
$\cat$, the external hom-set between two objects $X,Y$ is denoted by $\cat(X,Y)$, and the
identity morphism of an object $X$ may be denoted by $\id_X$ or $X$. The square brackets $[-,-]$ always denote an internal hom of a given closed structure and $\tensUnit$ a distinguished object (either the closed or the monoidal unit).

\section{Closed and monoidal categories}\label{sec:closed}
  In this section we review various flavours of closed categories and monoidal categories
  of different laxities.

\subsection{Left skew-closed categories}\label{ssec:skew-closed}
  First we turn to a suitably lax version of closed categories, following
  \cite{Street-skew_closed, UVZ_Eilenberg-Kelly_reloaded}, which has the unquestionable advantage of involving only conditions internal to $\cat$.
	
  \begin{definition}\label{def:skewclosed}
    A (\emph{left}) \emph{skew-closed category} is a tuple $(\cat, \tensUnit, i, j, \Gamma, [-,-])$, where 
    \begin{enumerate}[label={\itshape(\alph*)},leftmargin=0.8cm]
      \item
        $\tensUnit \in \cat$ is an object

      \item
      $[-,-] \colon \cat\op \times \cat \to \cat$ is a functor

      \item 
        $i \colon [\tensUnit, -] \To -$ is a natural transformation

      \item
        $j \colon \tensUnit \xrightarrow{\cdot\cdot} [-,-]$ is a dinatural transformation (see, e.g., \cite[Chapter IX, \S4]{Maclane}) 

      \item
        $\Gamma = \big\{\Gamma^X_{Y, Z} \colon [Y, Z] \to [[X, Y], [X, Z]]\big\}$ is a family of
        morphisms natural in the lower indices and dinatural in the upper index.
    \end{enumerate}
    These data are subject to the following axioms:
    \begin{equation*}
      \begin{tikzcd}
        \tensUnit \ar[r, "j_\tensUnit"] \ar[rd, equals]
        & {[\tensUnit, \tensUnit]} \ar[d, "i_\tensUnit"]
        \\ &
        \tensUnit
      \end{tikzcd}
      , \qquad
      \begin{tikzcd}
        {[X, Y]} \ar[r, "\Gamma^X_{X, Y}"] \ar[d, equals]
        & {[[X, X], [X, Y]]} \ar[d, "{[j_X, [X, Y]]}"]
        \\
        {[X, Y]}
        & {[\tensUnit, [X, Y]]} \ar[l, "{i_{[X, Y]}}"]
      \end{tikzcd}
    \end{equation*}

    \begin{equation*}
      \begin{tikzcd}
        \tensUnit \ar[r, "j_Y"] \ar[rd, swap, "{j_{[X, Y]}}"]
        & {[Y, Y]} \ar[d, "\Gamma^X_{Y, Y}"]
        \\ &
        {[[X, Y], [X, Y]]}
      \end{tikzcd}
      , \qquad
      \begin{tikzcd}
        {[X, Y]} \ar[r, "\Gamma^\tensUnit_{X, Y}"] \ar[rd, swap, "{[i_X, Y]}"]
        & {[[\tensUnit, X], [\tensUnit, Y]]} \ar[d, "{[[\tensUnit, X], i_Y]}"]
        \\ &
        {[[\tensUnit, X], Y]}
      \end{tikzcd}
    \end{equation*}

    \begin{equation*}
      \begin{tikzcd}[column sep=large]
        {[W, X]} \ar[r, "\Gamma^U_{W, X}"] \ar[dd, swap, "\Gamma^V_{W, X}"]
        & {[[U, W], [U, X]]} \ar[d, "{\Gamma^{[U, V]}_{[U, W], [U, X]}}"]
        \\ &
        {[[[U, V], [U, W]], [[U, V], [U, X]]]}
        \ar[d, "{[\Gamma^U_{V, W}, [[U, V], [U, X]]]}"]
        \\
        {[[V, W], [V, X]]}
        \ar[r, swap, "{[[V, W], \Gamma^U_{V, X}]}"]
        & {[[V, W], [[U, V], [U, X]]]}
      \end{tikzcd}
    \end{equation*}
    A skew-closed category is said to be 
    \begin{enumerate}[label=(N\arabic*),ref=(N\arabic*),leftmargin=1.2cm]
      \item\label{item:N1}
        \emph{left normal} if and only if
        \begin{align*}
          \hat\jmath_{X,Y}:\cat(X, Y) \to \cat(\tensUnit, [X, Y]), \quad
          f \mapsto [f, Y] \circ j_Y
        \end{align*}
        is a natural bijection;

        \item
          \emph{right normal} if and only if $i$ is a natural isomorphism;
		\item 
		  \emph{associative normal} if the canonical morphism
        \begin{align*}
          \hat\Gamma_{U,X,Y,Z}:\int^{V \in \cat} \cat(U, [V, Z]) \times \cat(X, [Y, V]) \to \cat(U, [X, [Y, Z]]),
        \end{align*}
defined component-wise as
\[
\hat\Gamma_{U,X,Y,Z}(f,g)\colon 
\xymatrix @C=40pt{
U \ar[r]^-f & [V,Z] \ar[r]^-{\Gamma^Y_{V,Z}} & [[Y,V],[Y,Z]] \ar[r]^-{[g,[Y,Z]]} & [X,[Y,Z]]
}
\]
for all $f\in \cat(U, [V, Z])$, $g\in \cat(X, [Y, V])$, is a bijection.
    \end{enumerate}
    A skew-closed category satisfying all three normality conditions will be called \emph{normal-closed}.
  \end{definition}

  \begin{remark}
    \begin{enumerate}[leftmargin=0.8cm]
    \item The adjective \emph{left} in front of the term \emph{skew-closed category} in \Cref{def:skewclosed} shall be interpreted as the adjective \emph{left} in \emph{left skew-monoidal category} (we recall this notion in \Cref{def:skewmonoidal}, for the convenience of the unaccustomed reader). In fact, as we will recall in \Cref{prop:bijection_skew_closed_monoidal_structures}, left skew-closed structures are adjoints to left skew-monoidal ones and vice-versa. Analogously, one could speak about \emph{right} skew-closed structures and these would be adjoints to \emph{right} skew-monoidal ones. The interested reader may refer to \cite[\S4]{UVZ_Eilenberg-Kelly_reloaded} for additional details. Henceforth, and as far as we are concerned, the left-hand side case would be enough and so we will often omit to specify it.
		
      \item
		The term \emph{closed category} is used in different ways in the literature, for a skew-closed category satisfying none, some or all of the normality conditions. Following \cite{UVZ_Eilenberg-Kelly_reloaded}, we agree that the term \emph{closed category} should be reserved for a skew-closed category that satisfies all three normality conditions, in light of the duality Theorem \ref{prop:bijection_skew_closed_monoidal_structures}.
However, in order to avoid confusion, we will call such a category ``normal-closed'' in this paper, and avoid to speak about ``closed category'' without any prefix.				
				
				\item In contrast to coherences of monoidal categories, left and associative normality for skew-closed categories are conditions \emph{external} to $\cat$, in the sense that they involve the fact that certain functions in the category of sets are bijections. They cannot be expressed by only using morphisms of $\cat$. This fact is the strongest feature of closed categories and it plays a crucial role in \cref{sec:gabi-Hopf}. Already at this stage it can be seen that a functor that preserves the closed structure, even in a strict way, will not necessarily preserve the normality conditions, because they are external.
				\qedhere
    \end{enumerate}
  \end{remark}

  Our main examples of skew-closed categories are the following ones.
		
  \begin{example}
    \label{ex:set_is_closed}
    The category $\Set$ of sets and functions is skew-closed.
    The inner hom functor is given by $[A, B] = \Set(A, B)$.
    The unit object is the unit object of the monoidal structure, i.e.\ a fixed
    one-element set $*$.
    The natural transformation $i_A \colon \Set(*, A) \to A$ is the isomorphism given by
    $i_A(f) = f(*)$, and the dinatural transformation $j_A \colon * \to \Set(A, A)$ picks
    out the identity, i.e.\ $j_A(*) = \id_A$.
    Finally, the transformation $\Gamma^A_{B, C}$ is given by post-composition, meaning
    \begin{equation*}
      \Gamma^A_{B, C} \colon \Set(B, C) \to \Set(\Set(A, B), \Set(A, C)),
      \quad f \mapsto (g \mapsto f \circ g)
      \ . \qedhere
    \end{equation*}
  \end{example}

  \begin{example}
    \label{ex:k-mod_is_closed}
    Let $k$ be a commutative ring with unit $1$.
    Its category of, say, left modules $\hmodM[k]$ is skew-closed.
    The inner hom is given by $[M, N] = \hmodM[k](M, N)$, on which $k$ acts as $(k .
    f)(m) = k f(m)$.
    The unit object is $k$.
    The natural transformation $i_M \colon \hmodM[k](k, M) \to M$ is the isomorphism
    given by $i_M(f) = f(1)$.
    The dinatural transformation $j_M$ is again `picking out the identity', i.e.\ it is
    the unique $k$-module map with $j_M(1) = \id_M$.
    Lastly, the transformation $\Gamma^M_{N, P}$ is again given by post-composition,
    $\Gamma^M_{N, P} (f) = f \circ -$.
  \end{example}
  
The categories in \Cref{ex:set_is_closed} and \Cref{ex:k-mod_is_closed} are both normal-closed, in fact. The interested reader may check this directly, but it follows more easily from the fact that they are closed monoidal (a notion that we will recall shortly in \cref{sec:skew_monoidal_cats}), in conjunction with the forthcoming \Cref{prop:bijection_skew_closed_monoidal_structures}. An example of a non normal-closed category is the following.
  
  \begin{example}
  	Let $k$ be a commutative ring and $R$ be a (not necessarily commutative) $k$-algebra. Consider the category $\hmodM[R]$ of left $R$-modules. For any $M,N\in \hmodM[R]$, define $[M,N]=\hmodM[k](M,N)$ with the following $R$-action.
    \[(r.f)(m) = r \cdot f(m)\]
  	for all $f\in \hmodM[k](M,N)$, $r\in R$ and $m\in M$. The unit object is given by the regular module $R$. The structure maps are given by
  	\begin{align*}
		i_M \colon & \hmodM[k](R,M)\to M, \qquad f \mapsto f(1_R), \\
  	j_M\colon & R\to \hmodM[k](M,M), \qquad r \mapsto \{m \mapsto r \cdot m\}
		\end{align*}
  	and $\Gamma$ is again given by post-composition. 
This is an associative normal but not unital normal closed structure. Again, the interested reader may check this directly, but it would be easier to observe that the following chain of bijections
\begin{align*}
\hmodM[R](.M,\hmodM[k](N,.P)) & \cong \hmodM[R](.M,\hmodM[R](.R. \ot N, .P)) \cong \hmodM[R]((.R. \ot N) \tensor_{R} .M,.P) \\
& \cong \hmodM[R](.M \ot N,.P)
\end{align*}
makes it clear that the given skew-closed structure $[-,-]$ corresponds, by adjunction, to the skew-monoidal structure $\ot$ with regular left $R$-action on the left-hand side tensor factor, which is an associative normal skew-monoidal structure, but not unital normal (see the forthcoming \Cref{ex:leftAmod}). 
  \end{example}

  Functors between skew-closed categories can be required to preserve the skew-closed
  structure in a coherent way.
  This leads to the following definition.

  \begin{definition}
    A functor $F \colon \cat \to \cat[D]$ between skew-closed  categories is
    \emph{closed} if there is a morphism $F_0 \colon \tensUnit_{\cat[D]} \to F\tensUnit_{\cat}$ and a
    natural transformation $F_2(X, Y) \colon F[X, Y]_{\cat} \to [FX, FY]_{\cat[D]}$
    satisfying
    \begin{gather*}
		\xymatrix @C=50pt{
		\tensUnit_{\cat[D]} \ar[r]^-{F_0} \ar[d]_-{j_{FX}} 
				& F \tensUnit_{\cat} \ar[d]^-{F j_X}  
				\\
        {[FX, FX]_{\cat[D]}}
        & F{[X, X]}_{\cat}
        \ar[l]^-{F_2(X, X)}
		}
			\\
    \xymatrix @C=50pt{
		FX \ar[r]^-{i_{FX}} \ar[d]_-{F i_X}
        & {[\tensUnit_{\cat[D]}, F X]}_{\cat[D]}
        \\
        {F[\tensUnit_{\cat}, X]_{\cat}} \ar[r]_-{F_2(\tensUnit_{\cat}, X)}
        &
        {[F\tensUnit_{\cat}, FX]_{\cat[D]}} \ar[u]_-{[F_0, FX]_{\cat[D]}}
				}
    \\
		\xymatrix @C=60pt{
		{F[X, Y]_{\cat}} \ar[r]^-{F\Gamma^Z_{X, Y}} \ar[d]_-{F_2(X, Y)}
        &
        {F[[Z, X]_{\cat}, [Z, Y]_{\cat}]_{\cat}} \ar[r]^-{F_2([Z, X], [Z,Y])}
        &
        {[F[Z, X]_{\cat}, F[Z, Y]_{\cat}]_{\cat[D]}} \ar[d]^-{[\id, F_2(Z, Y)]}
        \\
        {[FX, FY]_{\cat[D]}} \ar[r]_-{\Gamma^{FZ}_{FX, FY}}
        &
        {[[FZ, FX]_{\cat[D]}, [FZ, FY]_{\cat[D]}]_{\cat[D]}} \ar[r]_-{[F_2(Z, X), \id]}
        &
        {[F[Z, X]_{\cat}, [FZ, FY]_{\cat[D]}]_{\cat[D]}}
				}
    \end{gather*}
    If all of these are isomorphisms (identities), then $F$ is called
    \emph{strong (strict) closed}.
  \end{definition}

  \begin{example}
    The identity functor on a skew-closed category is strict closed.
    The composition of closed functors is closed: if $F, G$ are composable closed functors, then $FG$ is closed with
    $(FG)_0 = FG_0 \circ F_0$ and $(FG)_2(X, Y) = F_2(GX, GY) \circ FG_2(X, Y)$.
  \end{example}

\subsection{Skew-monoidal categories}
\label{sec:skew_monoidal_cats}
  The notion of a skew-monoidal category is dual to that of a skew-closed category, in a sense to be
  made precise in \Cref{prop:bijection_skew_closed_monoidal_structures}.

  \begin{definition}\label{def:skewmonoidal}
    Following \cite{Szlachanyi-skew-monoidal}, a (\emph{left}) \emph{skew-monoidal category} is a
    tuple $(\cat, \tensor, \tensUnit, \alpha, \lambda, \rho)$ where $\cat$ is a category,
    $\tensUnit$ is an object of $\cat$, $\tensor \colon \cat \times \cat \to \cat$ is a
    functor, and
		\begin{equation}\label{eq:monoidal}
    \begin{gathered}
      \alpha_{X, Y, Z} \colon (X \tensor Y) \tensor Z \to X \tensor (Y \tensor Z)
      \ , \\
      \lambda_X \colon \tensUnit \tensor X \to X
      \ , \qquad
      \rho_X \colon X \to X \tensor \tensUnit
      \ ,
    \end{gathered}
		\end{equation}
    are natural transformations subject to the following axioms:
    \begin{gather*}
		\xymatrix{
		& (W \ot (X \ot Y)) \ot Z \ar[rd]^-{\alpha}
        &
        \\
        ((W \ot X) \ot Y) \ot Z \ar[ru]^-{\alpha \ot Z} \ar[d]_-{\alpha}
        & &
        W \ot ((X \ot Y) \ot Z) \ar[d]^-{W \ot \alpha}
        \\
        (W \ot X) \ot (Y \ot Z) \ar[rr]^-{\alpha}
        & &
        W \ot (X \ot (Y \ot Z))
				}
    \\
    \xymatrix{
		X \ot Y \ar[r]^-{\rho} \ar[rd]_-{X \ot \rho}
        & (X \ot Y) \ot \tensUnit \ar[d]^-{\alpha}
        \\ &
        X \ot (Y \ot \tensUnit)
		}
      \qquad
		\xymatrix{
		(\tensUnit \ot X) \ot Y \ar[r]^-{\alpha} \ar[d]_-{\lambda \ot Y}
        & \tensUnit \ot (X \ot Y) \ar[ld]^-{\lambda}
        \\
        X \ot Y
		}
    \\
		\xymatrix @C=35pt{
		X \ot Y \ar[r]^-{\rho \ot Y} \ar@{=}[d]
        & (X \ot \tensUnit) \ot Y \ar[d]^-{\alpha}
        \\
        X \ot Y & X \ot (\tensUnit \ot Y) \ar[l]^-{X \ot \lambda}
		}
      \qquad
		\xymatrix{
		\tensUnit \ar[d]_-{\rho} \ar@{=}[dr]
        \\
        \tensUnit \ot \tensUnit \ar[r]_-{\lambda}
        & \tensUnit
		}
    \end{gather*}
    A skew-monoidal category is
    \begin{enumerate}[leftmargin=0.8cm]
      \item
        \emph{left normal} if and only if $\lambda$ is an isomorphism,
      \item
        \emph{right normal} if and only if $\rho$ is an isomorphism,
      \item
        \emph{associative normal} if and only if $\alpha$ is an isomorphism.
    \end{enumerate}  
    A \emph{monoidal category} is a skew-monoidal category satisfying
    all three normality conditions.
  \end{definition}
	
	\begin{remark}
	A \emph{right} skew-monoidal category would be defined similarly, but with all structure morphisms \eqref{eq:monoidal} reversed. Since we will be interested mainly in left skew-monoidal categories, we will often omit to specify it.
	\end{remark}
	
	\begin{example}\label{ex:leftAmod}
	Let $k$ be a commutative ring with unit and let $R$ be a $k$-algebra. The category $\hmodM[R]$ of left $R$-modules is an associative normal skew-monoidal category with respect to the tensor product $\otimes$ over $k$ and the natural transformations
	\begin{equation}\label{eq:assunitors}
	\begin{gathered}
	\alpha_{M,N,P} \colon (M \otimes N) \otimes P \xrightarrow{\cong} M \otimes (N \otimes P), \qquad (m \otimes n) \otimes p \mapsto m \otimes (n \otimes p), \\ 
	\lambda_M \colon R \otimes M \to M, \qquad r \otimes m \mapsto r \cdot m, \\
	\rho_M \colon M \to M \otimes R, \qquad m \mapsto m \otimes 1_R.
	\end{gathered} \qedhere
	\end{equation}
	\end{example}

\subsection{Skew-closed skew-monoidal categories}\label{ssec:skewclosed}
  Let now $\cat$ be a skew-closed category, with the usual notation, and assume that
  for each $X$ in $\cat$ there is an adjunction $L_X \dashv [X, -]$.
  Denote by $\coev^X$ and $\ev^X$ the unit and counit of each adjunction.
  Since $[-,-]$ is a bifunctor, there is a unique way to assign to each arrow $f \colon X \to Y$ and each object $Z$ of $\cat$ an arrow $L_f \colon L_XZ \to L_YZ$ of $\cat$ so that $(X, Y) \mapsto L_X Y$ becomes a bifunctor for which the bijection $\cat(L_XY,Z) \cong \cat(Y,[X,Z])$ is natural in all three variables (cf.\ the symmetric version of \cite[Theorem IV.7.3]{Maclane}).
  The action of $L$ on morphisms $f \colon X \to Y$ in the lower index is given by
  \begin{align*}
    L_f Z \colon L_X Z
    \xrightarrow{L_X \coev^Y_Z} L_X [Y, L_Y Z]
    \xrightarrow{L_X [f, L_Y Z]} L_X [X, L_Y Z]
    \xrightarrow{\ev^X_{L_Y Z}} L_Y Z
    \ .
  \end{align*}
  For later use we record the following property of the adjunctions above (see \cite[\S IX.4, page 216]{Maclane}).

  \begin{lemma}
    \label{prop:ev_coev_dinatural}
    The evaluations and coevaluations $\ev^X$ and $\coev^X$ are dinatural in $X$.
  \end{lemma}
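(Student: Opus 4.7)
The plan is to unfold the two dinaturality hexagons explicitly and verify each one by transposing across the adjunctions $L_X \dashv [X,-]$, exploiting the defining formula
\[
L_f Z = \ev^X_{L_Y Z} \circ L_X[f, L_Y Z] \circ L_X \coev^Y_Z
\]
recalled just above. Concretely, for $f \colon X \to Y$ in $\cat$, dinaturality of $\coev$ amounts to the identity
\[
[X, L_f Z] \circ \coev^X_Z = [f, L_Y Z] \circ \coev^Y_Z \colon Z \to [X, L_Y Z],
\]
while dinaturality of $\ev$ amounts to
\[
\ev^X_Z \circ L_X[f, Z] = \ev^Y_Z \circ L_f [Y, Z] \colon L_X[Y, Z] \to Z.
\]

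First I would settle the dinaturality of $\coev$. The left-hand side of the first equation is, by definition of the adjoint transpose under $L_X \dashv [X,-]$, the $X$-transpose of $L_f Z \colon L_X Z \to L_Y Z$. On the other hand, the displayed formula for $L_f$ can be read as $\ev^X_{L_Y Z} \circ L_X\bigl([f, L_Y Z] \circ \coev^Y_Z\bigr)$, and so exhibits $L_f Z$ as the morphism whose $X$-transpose is precisely $[f, L_Y Z] \circ \coev^Y_Z$. Because the transpose is a bijection, the two expressions must agree, which is exactly the claim.

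With $\coev$ known to be dinatural, I would handle $\ev$ by transposing both sides of the second displayed equation under $L_X \dashv [X,-]$ and comparing. The transpose of the left-hand side collapses to $[f, Z]$ by naturality of $\coev^X$ applied to $[f, Z] \colon [Y,Z] \to [X,Z]$, followed by the triangle identity. The transpose of the right-hand side equals $[X, \ev^Y_Z] \circ [X, L_f[Y,Z]] \circ \coev^X_{[Y,Z]}$; replacing $[X, L_f[Y,Z]] \circ \coev^X_{[Y,Z]}$ by $[f, L_Y[Y,Z]] \circ \coev^Y_{[Y,Z]}$ via the dinaturality of $\coev$ just established, and then using bifunctoriality of $[-,-]$ together with the triangle identity $[Y, \ev^Y_Z] \circ \coev^Y_{[Y,Z]} = \id_{[Y,Z]}$, reduces it also to $[f, Z]$. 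Bijectivity of the transpose then forces the original two morphisms to coincide.

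The only nontrivial step is recognizing the defining formula for $L_f Z$ as the inverse transpose of $[f, L_Y Z] \circ \coev^Y_Z$; once this is spotted, dinaturality of $\coev$ is immediate and that of $\ev$ follows by a short formal manipulation. I would expect no serious obstacle beyond this identification, which is essentially the observation that $L_f$ is nothing but the mate of $[f,-]$ with respect to the parametrised adjunctions $L_X \dashv [X,-]$.
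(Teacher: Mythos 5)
Your proof is correct and rests on the same ingredients as the paper's proof: the defining formula for $L_f$, naturality of the unit and counit, and the triangle identities of the adjunctions $L_X \dashv [X,-]$. The only (harmless) difference in organization is that the paper verifies the evaluation identity by a direct three-step computation and declares the coevaluation case ``completely analogous,'' whereas you first dispatch $\coev$ by recognizing $L_f Z$ as the inverse transpose of $[f, L_Y Z]\circ \coev^Y_Z$ and then obtain the $\ev$ identity by transposing both sides -- the two routes are essentially interchangeable.
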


  \begin{proof}
    We show it for the evaluation, the coevaluation being completely analogous.
    One computes for $f \colon X \to Y$
    \begin{align*}
      \ev^Y_Z
      \circ L_f [Y, Z]
      &=
      \ev^Y_Z
      \circ \ev^X_{L_Y [Y, Z]}
      \circ L_X [f, L_Y [Y, Z]]
      \circ L_X \coev^Y_{[Y, Z]}
      \\ &\oversetEq[(*)]
      \ev^X_{Z}
      \circ L_X [f, Z]
      \circ L_X [Y, \ev^Y_Z]
      \circ L_X \coev^Y_{[Y, Z]}
      \\ &\oversetEq[(\star)]
      \ev^X_{Z}
      \circ L_X [f, Z]
    \end{align*}
    using $(*)$ naturality of $\ev^X$, and $(\star)$ one of the triangles of the adjunction
    $L_X \dashv [X, -]$.
  \end{proof}

  Transporting the closed structure through the
  adjunction imparts on $\cat$ the structure of a skew-monoidal category
  \cite[Proposition 18]{Street-skew_closed}.
  Let us describe here how the structure is obtained.
  We have the natural transformation
  \begin{align}
    p_{X, Y, Z} \colon
    [L_Y X, Z] \xrightarrow{\Gamma^{Y}_{L_Y X, Z}}
    [[Y, L_Y X], [Y, Z]] \xrightarrow{ [\coev^Y_X, [Y, Z]] }
    [X, [Y, Z]]
    \ .
    \label{eq:associativity_between_inner_hom_and_tensor}
  \end{align}
  From this, one constructs the natural transformation
  \begin{align}
    \cat(L_{L_Z Y} X, V) 
    \xrightarrow{\sim} \cat(X, [L_Z Y, V])
    \xrightarrow{p_{Y, Z, V} \circ -} \cat(X, [Y, [Z, V]])
    \xrightarrow{\sim} \cat(L_Z L_Y X, V)
    \ .
    \label{eq:nat_transformation-associator}
  \end{align}
  Such natural transformations are in bijection with $\cat(L_Z L_Y X, L_{L_Z Y} X)$
  by the Yoneda lemma,  and this gives a \emph{skew associator} $\alpha_{X, Y, Z} \colon L_Z L_Y X 
  \to L_{L_Z Y} X$ as the morphism corresponding to
  \eqref{eq:nat_transformation-associator}.

  The two \emph{skew unitors} are obtained from the adjunction as follows.
  Firstly, $\lambda_X \colon L_X \tensUnit \to X$ corresponds simply to $j_X \colon \tensUnit \to [X,
  X]$.
  For the other, consider the natural transformation
  \begin{align}
    \cat(L_\tensUnit X, Y) 
    \xrightarrow{\sim} \cat(X, [\tensUnit, Y]) 
    \xrightarrow{i_Y \circ -} \cat(X, Y)
    \ ,
    \label{eq:unitors_from_adjunction}
  \end{align}
  which by the Yoneda lemma gives $\rho_X \colon X \to L_\tensUnit X$.

  Defining $\otimes$ by setting $- \otimes X \coloneqq L_X$ concludes the construction of the skew-monoidal structure $(\cat, \tensor, \tensUnit, \alpha,
  \lambda, \rho)$, and we repeat here the following theorem.

  \begin{theorem}[{\cite[Theorems 2.10 and 3.8]{UVZ_Eilenberg-Kelly_reloaded}}]
    \label{prop:bijection_skew_closed_monoidal_structures}
    Let $\cat$ be a category with a distinguished object $\tensUnit$, and functors $-
    \tensor - \colon \cat \times \cat \to \cat$ and $[-,-] \colon \cat\op \times \cat
    \to \cat$.
    Assume that there are adjunctions $- \tensor X \dashv [X, -]$, natural in $X \in
    \cat$.
    Then left skew-monoidal structures $(\alpha, \lambda, \rho)$ on $(\cat, \tensor,
    \tensUnit)$ are in bijection with left skew-closed structures $(\Gamma, j, i)$ on
    $(\cat, [-,-], \tensUnit)$.

    Moreover, the left skew-monoidal structure is left/right/associative normal if and only if the
    left skew-closed structure is left/right/associative normal.
    The left skew-monoidal structure is associative normal if and only if $p$ from
    \eqref{eq:associativity_between_inner_hom_and_tensor} is a natural isomorphism.
  \end{theorem}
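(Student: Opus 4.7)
The plan is to use the adjunctions $- \ot X \dashv [X, -]$, natural in $X$, together with the Yoneda lemma to transport closed data to monoidal data and back, and then to verify that both the axioms and the normality conditions correspond under this transport. The forward passage from closed to monoidal has essentially been carried out in the excerpt immediately preceding the statement; what remains is to check the skew-monoidal axioms, to build the inverse construction, to establish mutual invertibility of the two passages, and to translate the three normality conditions.

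First, I would show that the structure $(\alpha, \lambda, \rho)$ built from $(\Gamma, i, j)$ above satisfies the skew-monoidal axioms. By Yoneda, each axiom becomes a statement about equality of two natural transformations on hom-sets; unfolding the definitions of $\alpha$, $\lambda$, $\rho$ via the adjunction and the morphism $p$ of \eqref{eq:associativity_between_inner_hom_and_tensor}, the pentagon for $\alpha$ reduces to the hexagon axiom for $\Gamma$; the two mixed triangles connecting $\alpha$ with $\lambda$ or $\rho$ reduce to the two axioms of \Cref{def:skewclosed} relating $\Gamma$ with $i$ and $j$; the triangle $X \ot Y \to (X \ot \tensUnit) \ot Y \to X \ot (\tensUnit \ot Y) \to X \ot Y$ reduces to the axiom $[X, Y] \to [[X, X], [X, Y]] \to [\tensUnit, [X, Y]] \to [X, Y]$; and the degenerate triangle $\lambda_\tensUnit \circ \rho_\tensUnit = \id$ reduces to the first axiom $i_\tensUnit \circ j_\tensUnit = \id$ of \Cref{def:skewclosed}. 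The dinaturality of $\ev$ and $\coev$ established in \Cref{prop:ev_coev_dinatural} is used throughout.

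Second, for the inverse passage, given a skew-monoidal structure $(\ot, \alpha, \lambda, \rho)$ I would define $j_X$ as the mate of $\lambda_X$ under $\cat(\tensUnit \ot X, X) \cong \cat(\tensUnit, [X, X])$, define $i_X$ as the unique morphism $[\tensUnit, X] \to X$ such that post-composition with $i$ implements, via $\cat(-, [\tensUnit, =]) \cong \cat(- \ot \tensUnit, =)$, pre-composition with $\rho$, and define $\Gamma^X_{Y, Z}$ by Yoneda as the unique morphism whose adjoint transpose is built from $\alpha$ together with evaluations. The axioms of a skew-closed category then follow from the skew-monoidal axioms by Yoneda transfer of the diagrams verified in the first step. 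That the two constructions are mutually inverse is then essentially forced: each piece of data is specified by a natural transformation of hom-sets, and the two constructions are adjoints of one another at that level, so the composites reduce to identities by Yoneda.

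Third, I would address the normality claims. Left normality of $\lambda$ transports under $\cat(\tensUnit \ot X, Y) \cong \cat(\tensUnit, [X, Y])$ to the bijection $\cat(X, Y) \cong \cat(\tensUnit, [X, Y])$, which is precisely condition \ref{item:N1}. Right normality of $\rho$ corresponds directly to invertibility of $i$, since both express, via the adjunction $L_\tensUnit \dashv [\tensUnit, -]$, that the same canonical natural map is invertible. For associative normality, the Yoneda-based definition of $\alpha$ via \eqref{eq:nat_transformation-associator} identifies invertibility of $\alpha$ with pointwise invertibility of $p$, and a co-Yoneda calculation — writing the coend $\int^V \cat(U, [V, Z]) \times \cat(X, [Y, V])$ as a colimit and comparing with the natural transformation induced by $p$ after adjunction — identifies this in turn with bijectivity of $\hat\Gamma$, giving the associative normality correspondence and the final claim about $p$ simultaneously. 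The main obstacle is this last step: matching the hexagon for $\Gamma$ against the pentagon for $\alpha$ requires careful bookkeeping across several adjunctions with three simultaneously varying indices, and bridging the coend-based condition on $\hat\Gamma$ with the pointwise invertibility of $p$ via co-Yoneda is the most delicate part of the argument.
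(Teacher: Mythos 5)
The paper offers no proof of this statement at all---it is imported verbatim from \cite{UVZ_Eilenberg-Kelly_reloaded} (Theorems 2.10 and 3.8; see also \cite{Street-skew_closed})---and your outline is a faithful reconstruction of the standard argument given in those references: Yoneda/adjunction transport of the structure maps in both directions, the correct axiom-by-axiom matching of the five skew-monoidal coherence diagrams with the five skew-closed ones, and the identification of the three normality conditions, including the co-Yoneda computation $\int^{V}\cat(U,[V,Z])\times\cat(X,[Y,V])\cong\cat(U,[L_Y X,Z])$ that reduces bijectivity of $\hat\Gamma$ to pointwise invertibility of $p$ and hence, by Yoneda, to invertibility of $\alpha$. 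I see no gap in the plan.
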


  \begin{remark}
    \label{rem:closed_monoidal_category}
    \begin{enumerate}[label=(\arabic*),ref=(\arabic*),leftmargin=0.8cm]
\item Remark that $p$ from \eqref{eq:associativity_between_inner_hom_and_tensor} being a natural isomorphism can be understood as $L_X$ being a left inverse for $[X,-]$, as functors between $\cat$-enriched categories.
				
			\item\label{item:closed3} For the sake of clearness, let us provide explicit formulae for the monoidal constraints obtained from the closed ones under the correspondence of \Cref{prop:bijection_skew_closed_monoidal_structures}.
			The associativity constraint $\alpha_{X,Y,Z}$ in $\cat((X \otimes Y) \otimes Z, X \otimes (Y \otimes Z))$ is given by
			\begin{align*} 
			\alpha_{X,Y,Z} = \ev^Z_{X \otimes (Y \otimes Z)} & \circ \big(\ev^Y_{[Z,X\otimes (Y \otimes Z)]} \otimes Z\big) \circ \big(([\coev^{Z}_Y,[Z,X \otimes (Y \otimes Z)]] \otimes Y ) \otimes Z\big) \\
			& \circ \big((\Gamma^Z_{Y \otimes Z,X \otimes (Y \otimes Z)} \otimes Y) \otimes Z\big) \circ \big((\coev_X^{Y \otimes Z} \otimes Y) \otimes Z\big)
			\end{align*}
			and it is the unique morphism such that
			\begin{equation}\label{eq:alphaunique}
			\begin{aligned}
			[Y,[Z,\alpha_{X,Y,Z}]] & \circ [Y, \coev_{X \otimes Y}^Z ]\circ \coev_X^Y \\
			& = [\coev_Y^Z,[Z,X \otimes (Y \otimes Z)]] \circ \Gamma_{Y \otimes Z,X\otimes(Y \otimes Z)}^Z \circ \coev_{X}^{Y \otimes Z};
			\end{aligned}
			\end{equation}
			the left-unit constraint $\lambda_X\in\cat(\tensUnit \otimes X,X)$ is given by $\lambda_X = \ev_X^X \circ (j_X \otimes X)$ and it is the unique morphism such that
			\[[X, \lambda_X] \circ \coev^X_\tensUnit = j_X;\]
			the right-unit constraint $\rho_X \in \cat(X,X \otimes \tensUnit)$ is given by $\rho_X = i_{X \otimes \tensUnit} \circ \coev^\tensUnit_X$. \qedhere
    \end{enumerate}
  \end{remark}

  \begin{definition}\label{def:left_skew_closed_skew_monoidal}
    A (left) skew-closed and (left) skew-monoidal category is called (\emph{left}) \emph{skew-closed
    skew-monoidal} if and only if the skew-closed and the skew-monoidal structure are dual to one
    another in the sense of
    \Cref{prop:bijection_skew_closed_monoidal_structures}.
    If either, and hence both, structure satisfies all three normality conditions,
    this situation is termed a \emph{right closed monoidal category} in literature
    (remark however that the adjective \emph{right} here has a different meaning than the {\em left} before: it indicates that the left adjoint to $[X, -]$ is given by tensoring
    on the {\em right}).
    A monoidal category is \emph{closed} if all functors $X \tensor -$ and $- \tensor
    X$ have right adjoints (leading to two, \textit{a priori} distinct, closed structures).
  \end{definition}

  \begin{example}
    The category $\Set$ is closed monoidal, the monoidal product being given by the
    categorical product. The closed structure (both left and right) is the one in \Cref{ex:set_is_closed}.
  \end{example}

  \begin{example}
    \label{ex:modules_over_comm_ring_mon_closed}
    Let $R$ be a ring, and consider the category $\hmodM[R][R]$ of $R$-bimodules.
    This is a monoidal category under $\tensor_R$, and it is in fact closed monoidal.
    Indeed, recall that for bimodules over rings $R, S, T$, we have the tensor-hom
    adjunctions
    \begin{align}
      \hmodM[S][T](N, \hmodM[R][](M, P))
      \cong \hmodM[R][T](M \tensor_S N, P)
      \cong \hmodM[R][S](M, \hmodM[][T](N, P))
      \label{eq:tensor_hom_adjunction}
      \ ,
    \end{align}
    where the bimodule structures on the hom spaces are of course $(sft)(m) = f(ms)t$ and
    $(rfs)(n) = rf(sn)$.
    Specializing to $S = T = R$, we obtain the left and right closed monoidal structures
    associated to the adjunctions
    \begin{align*}
      M \otimes_R - \dashv \hmodM[R](M, -)
      \qandq
      - \otimes_R M \dashv \hmodM[][R](M, -)
    \end{align*}
    of endofunctors on $\hmodM[R][R]$.
    Note that the two internal homs are in general not isomorphic.
    For later use, let us record that the evaluations and coevaluations for the left
    closed monoidal structure are
    \begin{gather*}
      \ev^{M}_N \colon M \otimes_R \hmodM[R](M, N) \to N,\quad 
      m \otimes_R f \mapsto f(m), \qquad \text{and} \\
      \coev^{M}_N \colon N \to \hmodM[R](M, M \otimes_R N),\quad
      n \mapsto (m \mapsto m \otimes_R n)
      \ .
    \end{gather*}
    The corresponding right versions are completely analogous.

    If $R = k$ is commutative, then the category of e.g.\ left modules $\hmodM[k]$ is
    closed monoidal as well.
    This is facilitated by first embedding $\hmodM[k]$ into $\hmodM[k][k]$ (as usual, a
    right action on $M \in \hmodM[k]$ is defined by $mr = rm$), then performing the
    relevant operations there, and finally forgetting back to $\hmodM[k]$.
    Note that in this case, $M \otimes N \cong N \otimes M$ canonically, and hence
    the left internal hom and the right internal hom agree.
    We obtain the closed structure from \Cref{ex:k-mod_is_closed}.
    The same can be done for right $k$-modules.
  \end{example}

\section{Towards Gabi-algebras}\label{sec:towardsGabi}
  In this section, $\cat$ is at least a skew-closed category.
  Let $T \in \End(\cat)$ be a monad on $\cat$.

\subsection{Lifting to Eilenberg-Moore categories}\label{ssec:EMcats}
  We wish to lift the skew-closed structure of $\cat$ to the Eilenberg-Moore category
  $\cat^T$.
  Thus we need to find a functor $[-,-]^T \colon (\cat^T)\op \times \cat^T \to \cat^T$,
  and natural transformations $i^T, j^T, \Gamma^T$, forming a skew-closed structure
  on $\cat^T$ and such that the forgetful functor $U_T \colon \cat^T \to \cat$ is
  strictly closed.
  Then, $i^T = i$, $j^T = j$, $\Gamma^T = \Gamma$.
  Thus, right normality of $\cat$ is immediately transferred to $\cat^T$, while this does
  not necessarily have to be the case for the other two normality conditions.

  We invoke the theorem for mixed liftings, \Cref{prop:mixed_liftings}, to obtain the following.

  \begin{lemma}[\Cref{prop:mixed_liftings_corollary}]
    \label{prop:liftings_of_inner_hom}
    Let $\cat$ be a category, $[-,-] \colon \cat\op \times \cat \to \cat$ a functor, and
    $(T, m, u)$ be a monad on $\cat$.
    The liftings of $[-,-]$ to $\cat^T$ are in bijective correspondence with natural
    transformations $s_{X, Y} \colon T[TX, Y] \to [X, TY]$ satisfying
    \begin{align}
      \label{eq:lifting_closed_unitality}
      s_{X, Y} \circ u_{[TX, Y]} 
      & = [u_X, u_Y], \\
      \label{eq:lifting_closed_multiplicativity}
      s_{X, Y} \circ m_{[TX, Y]} & = [X, m_Y] \circ s_{X, TY} \circ T s_{TX, Y} \circ
      T^2 [m_X, Y]
    \end{align}
    for all $X, Y \in \cat$.
  \end{lemma}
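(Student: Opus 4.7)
The plan is to recognise this statement as the direct instance of the mixed-lifting theorem for bifunctors (\Cref{prop:mixed_liftings}) proved in the appendix, applied to the bifunctor $[-,-]\colon \cat\op \times \cat \to \cat$. Concretely, I would establish the bijection by constructing both directions explicitly and then verifying they are mutually inverse.

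In the direction from $s$ to a lifting, for each pair of $T$-algebras $(X,\mu_X)$ and $(Y,\mu_Y)$ I would define a candidate $T$-algebra structure $\xi_{(X,\mu_X),(Y,\mu_Y)}\colon T[X,Y]\to[X,Y]$ by the composite
\[
T[X,Y]\xrightarrow{T[\mu_X,Y]}T[TX,Y]\xrightarrow{s_{X,Y}}[X,TY]\xrightarrow{[X,\mu_Y]}[X,Y].
\]
Axiom \eqref{eq:lifting_closed_unitality} on $s$, combined with the unit axioms for the two algebras and the naturality of $u$, yields $\xi\circ u_{[X,Y]}=\id_{[X,Y]}$; axiom \eqref{eq:lifting_closed_multiplicativity}, combined with the algebra associativity laws $\mu_X\circ m_X=\mu_X\circ T\mu_X$ and $\mu_Y\circ m_Y=\mu_Y\circ T\mu_Y$ and the naturality of $m$, yields $\xi\circ T\xi=\xi\circ m_{[X,Y]}$. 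Functoriality of the assignment $(X,Y)\mapsto ([X,Y],\xi)$ on morphisms of $T$-algebras then follows from the naturality of $s$ together with the bifunctoriality of $[-,-]$.

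Conversely, given a lifting of $[-,-]$, I would apply it to the free algebras $(TX,m_X)$ and $(TY,m_Y)$, obtaining for every $X,Y\in\cat$ a $T$-algebra structure $\xi_{X,Y}\colon T[TX,TY]\to[TX,TY]$, and define
\[
s_{X,Y}\coloneqq\Big(T[TX,Y]\xrightarrow{T[TX,u_Y]}T[TX,TY]\xrightarrow{\xi_{X,Y}}[TX,TY]\xrightarrow{[u_X,TY]}[X,TY]\Big).
\]
Naturality of $s$ is then an immediate consequence of the fact that the lifted bifunctor sends morphisms between free algebras (in particular those induced by $u$) to morphisms of $T$-algebras.

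The main obstacle—and the reason for the specific asymmetric form of \eqref{eq:lifting_closed_multiplicativity}—is the mixed variance of $[-,-]$: because $X$ enters contravariantly, the multiplication $m_X$ cannot be absorbed by simple pre-composition with $\xi$, but must appear \emph{inside} the bifunctor as $T^2[m_X,Y]$. Once this bookkeeping is set up correctly, both the verification of the $T$-algebra axioms and the check that the two constructions are mutually inverse reduce to routine diagram chases using the monad axioms for $(T,m,u)$, the naturality of $s$ (respectively of the lifted action), and the algebra/free-algebra identities.
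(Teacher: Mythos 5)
Your proposal is correct and follows essentially the same route as the paper: the paper obtains this lemma by specializing the mixed-lifting theorem of the appendix (\Cref{prop:mixed_liftings}, via \Cref{prop:mixed_liftings_corollary}) to the bifunctor $[-,-]$, and the two explicit constructions you give --- $\xi=[X,\mu_Y]\circ s_{X,Y}\circ T[\mu_X,Y]$ and $s_{X,Y}=[u_X,TY]\circ\xi_{X,Y}\circ T[TX,u_Y]$ on free algebras --- are exactly the formulas the paper records immediately after the lemma. The routine verifications you defer (unitality, associativity, functoriality, naturality, mutual inverseness) are precisely the content of the appendix proof.
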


  We call the conditions
  \cref{eq:lifting_closed_unitality,eq:lifting_closed_multiplicativity} on $s$
  \emph{unitality} and \emph{multiplicativity}, respectively.

  \smallskip

  Let us make explicit the bijection alluded to in
  \Cref{prop:liftings_of_inner_hom}.
  Let first $[-,-]^T$ be a lifting of $[-,-]$.
  For all $(M, \mu_M), (N, \mu_N) \in \cat^T$, we then in particular have that the
  object $[M, N]$ is equipped with a some action of $T$ that we denote by $\mu_M \star \mu_N \colon T[M, N] \to 
  [M, N]$.
  We can thus define a natural transformation as above via
  \begin{align*}
    T [TX, Y] \xrightarrow{T[TX, u_Y]} T[TX, TY] \xrightarrow{\mu_X \star \mu_Y} [TX, TY]
    \xrightarrow{[u_X, Y]} [X, TY]
    \ ,
  \end{align*}
  where $X, Y \in \cat$.
  Conversely, given a natural transformation $s_{X, Y} \colon T[TX, Y] \to [X, TY]$, we
  can define an action $\mu \star \sigma$ via
  \begin{align*}
    T[M, N] \xrightarrow{T[\mu_M, N]} T[TM, N] \xrightarrow{s_{M, N}} [M, TN]
    \xrightarrow{[M, \mu_N]} [M, N]
    \ ,
  \end{align*}
  where now of course $(M, \mu_M), (N, \mu_N) \in \cat^T$.

  \begin{example}
    \label{ex:Hopf_monad_1}
    Let $T$ be a Hopf monad on the left closed monoidal category $\cat$ (in the sense of \cite[\S3.1]{BLV-hopf_monads}).
    Then $T$ admits a \emph{left antipode}, which is a natural transformation $s_{X, Y}
    \colon T[TX, Y] \to [X, TY]$ satisfying some conditions only valid in the closed
    monoidal setting \cite[\S3.3]{BLV-hopf_monads}.
    However, by \cite[Proposition 3.8]{BLV-hopf_monads}, the antipode is an example of a
    natural transformation as in \Cref{prop:liftings_of_inner_hom}, which therefore may
    be seen as a generalization of the antipode to the closed but non-monoidal setting.
  \end{example}

  \smallskip

	\begin{remark}\label{rem:gamma}
	The interested reader may check that there is a bijective correspondence between natural transformations $s_{X, Y} \colon T[TX, Y] \to [X, TY]$ for $X,Y$ in $\cat$ and natural transformations $\gamma^M_Y \colon T[M,Y] \to [M,TY]$ for $Y$ in $\cat$ and $(M,\mu_M)$ in $\cat^T$. Indeed, given $s$ we can consider
	\[\gamma(s)_Y^M \colon T[M,Y] \xrightarrow{T[\mu_M,Y]} T[TM,Y] \xrightarrow{s_{M,Y}} [M,TY].\]
	In the opposite direction, given $\gamma$ we can consider
	\[s(\gamma)_{X,Y} \colon T[TX,Y] \xrightarrow{\gamma_Y^{TX}} [TX,TY] \xrightarrow{[u_X,TY]} [X,TY].\]
	These two constructions are well-defined and each others inverses. Furthermore, $s$ satisfies \eqref{eq:lifting_closed_unitality} if and only if $\gamma$ satisfies 
	\begin{equation}\label{eq:gamma1}
	\gamma_Y^M \circ u_{[M,Y]} = [M,u_Y]
	\end{equation}
	and $s$ satisfies \eqref{eq:lifting_closed_multiplicativity} if and only if $\gamma$ satisfies
	\begin{equation}\label{eq:gamma2}
	[M,m_Y] \circ \gamma^M_{TY} \circ T(\gamma^M_Y) = \gamma^M_Y \circ m_{[M,Y]}.
	\end{equation}
	The $T$-algebra structure on $[M,N]$ by means of $\gamma$ is given by
	\[T[M,N] \xrightarrow{\gamma^M_N} [M,TN] \xrightarrow{[M,\mu_N]} [M,N]\]
	for all $(M,\mu_M)$, $(N,\mu_N)$ in $\cat^T$.
	
	The fact that for any $(M,\mu_M)$ there exists $\gamma^M_Y$ natural in $Y$ and satisfying \eqref{eq:gamma1} and \eqref{eq:gamma2} is equivalent to require that there exists a functor $[(M,\mu_M),-]^T \colon \cat^T \to \cat^T$ such that $\omega[(M,\mu_M),-]^T = [M,\omega(-)]$. Naturality of $\gamma$ in $M$ is equivalent to the fact that the assignment $(\cat^T)^{\mathrm{op}} \to \mathsf{Funct}(\cat^T,\cat^T), (M,\mu_M) \mapsto [(M,\mu_M),-]^T,$ is functorial. See \cite[\S2.1]{Lack-Street-formal}, or \cite[Theorems 2.27 and 2.30]{Gabi-book} for a more elementary approach.
	\end{remark}

  In order to obtain a lifting of the entire skew-closed structure, one now simply needs to
  require two things:
  the existence of some $T$-algebra structure $T \tensUnit \to \tensUnit$ on $\tensUnit$
  and that the structural morphisms $i, j, \Gamma$ of $\cat$ are intertwiners of
  the $T$-algebra structures granted by the lifting $[-,-]_T$ of the bifunctor $[-,-]$.
The following theorem collects necessary and sufficient conditions for the skew-closed
  structure to lift.
	
  \begin{theorem}
    \label{eq:sufficient_condition_for_lifting_closed_structure_to_EM}
    Let $T$ be a monad on the left skew-closed category $\cat$. Then $\cat^T$ is left skew-closed such that the forgetful functor to $\cat$ is
    strictly closed if and only if:
    \begin{enumerate}[leftmargin=0.8cm]
      \item
        there is a morphism $\mu_\tensUnit \colon T \tensUnit \to \tensUnit$ such that
        $(\tensUnit, \mu_\tensUnit) \in \cat^T$,

      \item
        $T$ lifts $[-,-]$ via a natural transformation $s \colon T[T-,{\sim}] \To [-, T{\sim}]$
        as in \cref{prop:liftings_of_inner_hom},
    \end{enumerate}
		and they satisfy
		\begin{gather}
          T i_{X} = i_{T X} \circ s_{\tensUnit, X} \circ T[\mu_\tensUnit, X]
          , \quad \forall\, X \in \cat, \label{eq:siota} \\
          j_M \circ \mu_\tensUnit = [M, \mu_M \circ m_M] \circ s_{M, TM} \circ T j_{T M}
          , \quad \forall\, (M, \mu_M) \in \cat^T, \label{eq:sj}
        \end{gather}
        and for all $X, Y \in \cat$ and $(P, \mu_P) \in \cat^T$
        \begin{equation} \label{eq:sGamma}
          \Gamma^P_{X, TY} \circ s_{X, Y}
          = [[\mu_P, X], s_{P, Y}] \circ s_{[TP, X], [TP, Y]} 
          \circ T[s_{P, X}, [\mu_P, Y]] \circ T \Gamma^P_{TX, Y}
          \ .
        \end{equation}
  \end{theorem}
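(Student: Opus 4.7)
The proof splits into the ``if'' and ``only if'' directions, both ultimately relying on the faithfulness of the forgetful functor $U_T \colon \cat^T \to \cat$: strictness forces the lifted structure to be uniquely determined by its $U_T$-images, and the skew-closed axioms in $\cat^T$ are then inherited for free from those in $\cat$.

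For the ``if'' direction, given all the data I would define the lift explicitly by setting $\tensUnit^T := (\tensUnit, \mu_\tensUnit)$, using \cref{prop:liftings_of_inner_hom} with the given $s$ to lift $[-,-]$ (so that the $T$-action on $[M, N]$ for $(M, \mu_M), (N, \mu_N) \in \cat^T$ is $\mu_M \star \mu_N = [M, \mu_N] \circ s_{M, N} \circ T[\mu_M, N]$), and declaring the components of $i^T$, $j^T$, $\Gamma^T$ in $\cat^T$ to be those of $i$, $j$, $\Gamma$ under $U_T$. Equation \eqref{eq:siota}, specialized at $X = M$ and post-composed with $\mu_M$ using naturality of $i$, gives precisely the $T$-algebra-map condition for $i_M \colon [\tensUnit, M]^T \to (M, \mu_M)$; \eqref{eq:sj} and \eqref{eq:sGamma} play analogous roles for $j_M$ and $\Gamma^P_{M,N}$. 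The skew-closed axioms in $\cat^T$ then transfer automatically by faithfulness of $U_T$.

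For the ``only if'' direction, strictness of $U_T$ forces $\tensUnit^T = (\tensUnit, \mu_\tensUnit)$ for some $\mu_\tensUnit$ (yielding (1)) and $[-,-]^T$ to arise from some $s$ as in \cref{prop:liftings_of_inner_hom} (yielding (2)), while the components of $i^T$, $j^T$, $\Gamma^T$ project to $i$, $j$, $\Gamma$. Requiring these to be morphisms in $\cat^T$ yields the relevant $T$-algebra-map conditions. For $j$, whose equation is already parametrized over $\cat^T$, a direct expansion of $\mu_M \star \mu_M$ combined with dinaturality of $j$, naturality of $s$ at $\mu_M$, and the algebra-associativity identity $\mu_M \circ T\mu_M = \mu_M \circ m_M$ delivers \eqref{eq:sj} with no additional bookkeeping.

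For $i$ and $\Gamma$, whose stated equations range over arbitrary objects of $\cat$ in some slots, the main obstacle is to promote the condition from the $\cat^T$-level to the $\cat$-level. I would do this by applying the algebra-map condition at the free algebras $(TX, m_X)$ (and also $(TY, m_Y)$ in the case of $\Gamma$), then pre-composing with $T[\tensUnit, u_X]$ (and analogous morphisms for $\Gamma$), and using the monad triangle identity $m_X \circ Tu_X = \id_{TX}$ together with naturality of $s$ in both slots and (di)naturality of $i$, $\Gamma$ to collapse the resulting free-algebra equation down to the stated one. This extraction trick is especially delicate for \eqref{eq:sGamma}, where the bookkeeping involves three object slots, bifunctoriality of $[-,-]$, and two nested applications of $s$.
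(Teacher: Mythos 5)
Your proposal is correct and follows essentially the same route as the paper: the necessity direction is obtained by expressing the $T$-algebra-map conditions for $i$, $j$, $\Gamma$ (with $j$ handled directly and $i$, $\Gamma$ extracted by evaluating at free algebras $(TX,m_X)$, $(TY,m_Y)$, precomposing with unit morphisms, and using $m\circ Tu=\id$ together with (di)naturality), while sufficiency reverses these computations and transfers the skew-closed axioms through the faithful strict closed forgetful functor. This matches the paper's diagrammatic argument step for step.
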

	
	\begin{proof}
	Let us prove the necessity part. Suppose that $i_M$ is a morphism of $T$-algebras for any $(M,\mu_M)$ in $\cat^T$. Then, given any $X$ in $\cat$, in the following diagram
	\[
	\xymatrix @C=40pt @R=40pt{
	T[T\tensUnit,X] \ar@{}[dr]|-{(a)} \ar[d]_-{T[T\tensUnit,u_X]} & T[\tensUnit,X] \ar@{}[dr]|-{(b)} \ar[l]_-{T[\mu_\tensUnit,X]} \ar[d]|-{T[\tensUnit,u_X]} \ar[r]^-{Ti_{X}} & {TX} \ar[d]|-{Tu_{X}} \ar@/^5ex/[dd]^{\id_{{TX}}} \\
	T[T\tensUnit,{TX}] \ar[d]_-{s_{\tensUnit,{TX}}}\ar@{}[dr]|-{(c)}  & T[\tensUnit,{TX}] \ar[d]|-{\mu_{[\tensUnit,{TX}]}} \ar@{}[dr]|-{(d)} \ar[l]^-{T[\mu_\tensUnit,{TX}]} \ar[r]_-{Ti_{TX}} & T^2X \ar[d]|-{\mu_{TX}} \\
	[\tensUnit,T^2X] \ar[r]_-{[\tensUnit,\mu_{TX}]} & [\tensUnit,{TX}] \ar[r]_-{i_{TX}} & {TX}
	}
	\]
	$(a)$ commutes by bifunctoriality of $[-,-]$, $(b)$ commutes by naturality of $i$, $(c)$ commutes by definition of $\mu_{[\tensUnit,M]}$ and $(d)$ commutes by hypothesis. The right-most relation follows from the fact that $\mu_{TX} = m_X$. Since, moreover, $s$ is natural in both entries, $s_{\tensUnit,TX} \circ T[T\tensUnit,u_X] = [\tensUnit,Tu_X] \circ s_{\tensUnit,X}$, whence
	\[Ti_X = i_{TX} \circ [\tensUnit,\mu_{TX}\circ Tu_X] \circ s_{\tensUnit,X} \circ T[\mu_\tensUnit,X] = i_{TX} \circ s_{\tensUnit,X} \circ T[\mu_\tensUnit,X]\]

Now, suppose that $j_M$ is a morphism of $T$-algebras for every $(M,\mu_M)$ in $\cat^T$. This entails that the following diagram commutes for every $(M,\mu_M)$ in $\cat^T$
\[
\xymatrix @C=40pt @R=40pt{
 & T[TM,TM] \ar[dr]^-{T[TM,\mu_M]} \ar[rr]^-{s_{M,TM}} \ar@{}[d]|-{(a)} & & [M,T^2M] \ar[ddl]^-{[M,T\mu_M]}  \ar@{}[dl]|-{(b)} \\
T\tensUnit  \ar@{}[dr]|-{(c)} \ar[r]_-{Tj_M} \ar[d]_-{\mu_\tensUnit} \ar[ur]^-{Tj_{TM}} & T[M,M] \ar@{}[dr]|-{(d)} \ar[d]^-{\mu_{[M,M]}} \ar[r]_-{T[\mu_M,M]} & T[TM,M] \ar[d]|-{s_{M,M}} & \\
\tensUnit \ar[r]_-{j_M} & [M,M] & [M,TM] \ar[l]^-{[M,\mu_M]} &
}
\]
since $(a)$ commutes by dinaturality of $j$, $(b)$ commutes by naturality of $s$, $(c)$ commutes by hypothesis and $(d)$ commutes by definition of $\mu_{[M,M]}$. That is,
\[j_M \circ \mu_\tensUnit = [M,\mu_M\circ T\mu_M] \circ s_{M,TM} \circ Tj_{TM} = [M,\mu_M\circ m_M] \circ s_{M,TM} \circ Tj_{TM}.\]

Finally, suppose that $\Gamma_{M,N}^P$ is a morphism of $T$-algebras for all $(M,\mu_M)$, $(N,\mu_N)$, $(P,\mu_P)$ in $\cat^T$. This entails that the following diagram commutes for all $(M,\mu_M)$, $(N,\mu_N)$, $(P,\mu_P)$ in $\cat^T$
{\footnotesize
\[
\xymatrix @C=40pt @R=35pt{
T[TM,N] \ar@{}[dr]|-{(a)} \ar@/_1.3cm/[ddd]|-{s_{M,N}} \ar[r]^-{T\Gamma_{TM,N}^P} & T[[P,TM],[P,N]] \ar@{}[dr]|-{(b)} \ar[r]^-{T[s_{P,M},[P,N]]} & T[T[TP,M],[P,N]] \ar[d]|-{T[T[\mu_P,M],[P,N]]} \ar@/^3ex/[dr]|-{T[T[TP,M],[\mu_P,N]]} &  \\
T[M,N] \ar@{}[dr]|-{(c)} \ar[r]^-{T\Gamma_{M,N}^P} \ar[d]|-{\mu_{[M,N]}} \ar[u]|-{T[\mu_M,N]} & T[[P,M],[P,N]]  \ar@{}[dr]|-{(d)} \ar[u]|-{T[[P,\mu_M],[P,N]]} \ar[d]|-{\mu_{[[P,M],[P,N]]}} \ar[r]^-{T[\mu_{[P,M]},[P,N]]} & T[T[P,M],[P,N]] \ar@{}[dr]|-{(g)} \ar[d]|-{s_{[P,M],[P,N]}} & T[T[TP,M],[TP,N]] \ar[d]|-{s_{[TP,M],[TP,N]}} \\
[M,N] \ar@{}[dr]|-{(e)} \ar[r]_-{\Gamma_{M,N}^P} & [[P,M],[P,N]]  \ar@{}[dr]|-{(f)} & [[P,M],T[P,N]] \ar[l]^-{[[P,M],\mu_{[P,N]}]} \ar[d]|-{[[P,M],T[\mu_P,N]]} & [[TP,M],T[TP,N]] \ar@/^3ex/[dl]|-{[[\mu_P,M],T[TP,N]]} \\
[M,TN] \ar[r]_-{\Gamma_{M,TN}^P} \ar[u]|-{[M,\mu_N]} & [[P,M],[P,TN]] \ar[u]|-{[[P,M],[P,\mu_N]]} & [[P,M],T[TP,N]] \ar[l]^-{[[P,M],s_{P,N}]}
}
\]
}%
since $(a)$ and $(e)$ commute by naturality of $\Gamma$, the left-most bended diagram, $(b)$, $(d)$ and $(f)$ commute by definition of $\mu_{[M,N]}$, $(c)$ commutes by hypothesis, and $(g)$ commutes by naturality of $s$. As a consequence,
\begin{align*}
& \Gamma_{M,TY}^P \circ s_{M,Y} \circ T[\mu_M,Y] = \\
& = [[P,M],[P,\mu_{TY}]] \circ \Gamma_{M,T^2Y}^P \circ s_{M,TY} \circ T[\mu_M,TY] \circ T[M,u_Y] \\
& \begin{aligned} ~\stackrel{(*)}{=}~ & [[P,M],[P,\mu_{TY}]] \circ [[\mu_P,M],s_{P,TY}] \circ s_{[TP,M],[TP,TY]} \circ T[s_{P,M},[\mu_P,TY]] ~\circ \\ & \circ ~ T\Gamma_{TM,TY}^P \circ T[\mu_M,TY] \circ T[M,u_Y] \end{aligned} \\
& = [[\mu_P,M],s_{P,Y}] \circ s_{[TP,M],[TP,Y]} \circ T[s_{P,M},[\mu_P,Y]] \circ T\Gamma_{TM,Y}^P \circ T[\mu_M,Y]
\end{align*}
where $(*)$ is the commutativity of the diagram above with $N = TY$, and therefore
\begin{align*}
& \Gamma_{X,TY}^P \circ s_{X,Y} = \\
& = [[P,u_X],[P,TY]] \circ \Gamma_{TX,TY}^P \circ s_{TX,Y} \circ T[\mu_{TX},Y] \\
& \begin{aligned} ~=~ & [[P,u_X],[P,TY]] \circ [[\mu_P,TX],s_{P,Y}] \circ s_{[TP,TX],[TP,Y]}~\circ \\ & \circ ~ T[s_{P,TX},[\mu_P,Y]] \circ T\Gamma_{T^2X,Y}^P \circ T[\mu_{TX},Y] \end{aligned} \\
& = [[\mu_P,X],s_{P,Y}] \circ s_{[TP,X],[TP,Y]} \circ T[s_{P,X},[\mu_P,Y]] \circ T\Gamma_{TX,Y}^P
\end{align*}
for all $X,Y$ in $\cat$, by naturality of the maps involved and because $\mu_{TX} = m_X$.

To conclude, a close inspection of the diagrams above shall convince the reader that the three conditions are also sufficient.
	\end{proof}

	\begin{remark}\label{rem:gamma2}
	Continuing \cref{rem:gamma}, a technical but otherwise straightforward check reveals that $s$ satisfies \eqref{eq:siota}, \eqref{eq:sj} and \eqref{eq:sGamma} if and only if $\gamma$ satisfies
	\begin{gather}
	Ti_X = i_{TX} \circ \gamma_X^\tensUnit, \label{eq:gammai} \\
	j_M \circ \mu_\tensUnit = [M,\mu_M] \circ \gamma^M_M \circ Tj_M, \label{eq:gammaj}
	\end{gather}
	and
	\begin{equation}\label{eq:gammaGamma}
	\Gamma_{TX,TY}^P \circ \gamma_Y^{TX} = [[P,TX],\gamma^P_Y] \circ \gamma^{[P,TX]}_{[P,Y]} \circ T\Gamma^P_{TX,Y},
	\end{equation}
	respectively, for all $(M,\mu_M)$, $(P,\mu_P)$ in $\cat^T$ and all $X,Y$ in $\cat$.
	\end{remark}

\subsection{The skew-closed skew-monoidal setting}\label{ssec:closedmonsetting}
  Let now $\cat$ be, in addition, left skew-closed skew-monoidal.
  To help spell out the intertwining condition in the examples, we employ the skew-closed
  skew-monoidal structure and 
  we denote by $L_X$ the left adjoint of $[X, -]$.
  The adjunction establishes a bijection between natural transformations $T[TX, -] \To
  [X, T-]$ and natural transformations $L_X T - \To T L_{T X} -$.
  Explicitly, the latter sends a natural transformation $s_{X, Y} \colon T[TX, Y]
  \to [X, TY]$ to $t_{X,Y} \colon L_Y T X \to T L_{TY} X$ given by
  \begin{equation}
    \label{eq:t_from_s}
    L_Y T X
    \xrightarrow{L_Y T \coev^{TY}_X} L_Y T [TY, L_{TY} X]
    \xrightarrow{L_Y s_{Y, L_{TY} X}} L_Y [Y, TL_{TY} X]
    \xrightarrow{\ev^Y_{T(L_{TY} X)}} T L_{TY} X
    \ .
  \end{equation}
  This yields a natural transformation $L_- (T ({\sim})) \To T (L_{T -} ({\sim}))$ between
  bifunctors, which we shall call the \emph{mate} of $s$.
	Notice that $t_{X,Y} \in \cat\big(L_YTX,T(L_{TY}X)\big)$ is the unique morphism such that
	\begin{equation}\label{eq:ts}
	[Y,t_{X,Y}] \circ \coev_{TX}^{Y} = s_{Y,L_{TY}X} \circ T\big(\coev_{X}^{TY}\big).
	\end{equation}
	
	\begin{remark}
	For every object $X$ in $\cat$,
	\[
	t_{X,-}\colon 
	\begin{gathered}
	\xymatrix @=30pt{
	\cat \ar[r]^-{T} \ar[d]_-{L_{TX}} & \cat \ar[d]^-{L_X} \\
	\cat \ar[r]_-{T} & \cat \ar@{}[ul]|-{\rotatebox[origin=c]{-45}{$\Downarrow$}}
	}
	\end{gathered}
	\quad \text{ is indeed the mate of } \quad
	s_{X,-}\colon 
	\begin{gathered}
	\xymatrix @=30pt{
	\cat \ar[r]^-{T} & \cat \\
	\cat \ar[r]_-{T} \ar[u]^-{[TX,-]} & \cat \ar[u]_-{[X,-]} \ar@{}[ul]|-{\rotatebox[origin=c]{45}{$\Downarrow$}}
	}
	\end{gathered}
	\] 
	in the categorical sense; one would just need to adapt the definition of a mate as in \cite[\S6.1, page 186]{Leinster} to fit the setting of \cite[Chapter IV, \S7, page 98]{Maclane}. Thus, our ``parametrized'' version of the terminology is justified.
	\end{remark}
	
	Now, recall from \cref{rem:closed_monoidal_category}\ref{item:closed3} the definitions of $\alpha,\lambda,\rho$ in terms of $i,j,\Gamma$.
	The properties of $s$ translate to the mate in the following way.

  \begin{proposition}
    \label{prop:recasting_s_to_t}
    Let $s_{X, Y} \colon T[TX, Y] \to [X, TY]$ be natural, and let $t$ be its mate.
    Then 
    \begin{enumerate}[leftmargin=0.8cm,label=(\arabic*),ref=(\arabic*)]
      \item
        $s$ is unital, i.e.\ satisfies \eqref{eq:lifting_closed_unitality}, if and only
        if $t$ satisfies
        \begin{align}
          \label{eq:t_unitality}
          t_{X,Y} \circ L_Y u_X
          = u_{L_{TY} X} \circ L_{u_Y} X 
        \end{align}
				for all $X,Y$ in $\cat$;

      \item
        $s$ is multiplicative, i.e.\ satisfies
        \eqref{eq:lifting_closed_multiplicativity}, if and only if $t$ satisfies
        \begin{align}
          \label{eq:t_multiplicativity}
          t_{X,Y} \circ L_Y m_X 
          = T(L_{m_Y} X) \circ m_{L_{T^2 Y} X} \circ T t_{X,TY} \circ t_{TX,Y}
        \end{align}
				for all $X,Y$ in $\cat$;
				
				\item $s$ satisfies \eqref{eq:siota} if and only if $t$ satisfies
					\begin{equation}\label{eq:trhoL}
					TL_{\mu_\tensUnit}X \circ t_{X,\tensUnit} \circ \rho_{TX} = T\rho_X
					\end{equation}
				  for all $X$ in $\cat$;
					
				\item $s$ satisfies \eqref{eq:sj} if and only if $t$ satisfies
	\begin{equation}\label{eq:tlambdaL}
	\lambda_M \circ L_M\mu_\tensUnit = \mu_M \circ m_M \circ T(\lambda_{TM}) \circ t_{\tensUnit,M}
	\end{equation}
	for all $(M,\mu_M)$ in $\cat^T$;
	
	\item\label{item:Prop3.6(5)} $s$ satisfies \eqref{eq:sGamma} if and only if $t$ satisfies
	\begin{equation}\label{eq:talphaL}
	\begin{aligned}
	& t_{X,L_MY} \circ \alpha_{TX,Y,M} = T\Big(L_{T\big(L_{\mu_M^{(2)}}Y \big)}X \Big) \circ T\left(L_{t_{Y,TM}}X\right) \circ T\left(\alpha_{X,TY,TM} \right) \circ t_{L_{TY}X,M} \circ L_Mt_{X,Y}
	\end{aligned}
	\end{equation}
	for all $X,Y$ in $\cat$ and all $(M,\mu_M)$ in $\cat^T$, where $\mu_M^{(2)}$ denotes $\mu_M \circ m_M = \mu_M \circ T\mu_M$.
    \end{enumerate}
  \end{proposition}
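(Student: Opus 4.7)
The plan is to exploit the fact that, by construction, $t$ is the mate of $s$ under the family of adjunctions $L_X \dashv [X,-]$, so that each equation about $s$ should transpose, via these adjunctions, to an equivalent equation about $t$. The workhorse identity will be the defining relation \eqref{eq:ts}, together with the triangle identities of $L_Y \dashv [Y,-]$, the (di)naturality of $\ev$ and $\coev$ from \Cref{prop:ev_coev_dinatural}, and the naturality of the monad unit $u$ and multiplication $m$.

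For parts (1) and (2), both sides of the unitality condition \eqref{eq:lifting_closed_unitality} and the multiplicativity condition \eqref{eq:lifting_closed_multiplicativity} on $s$ are natural transformations built from the bifunctor $[-,-]$ and the monad $T$. I would apply $L_Y$ to both sides, pre-compose with a suitable coevaluation, and repeatedly replace $[Y,t_{X,Y}]\circ\coev^Y_{TX}$ by $s_{Y,L_{TY}X}\circ T\coev^{TY}_X$ using \eqref{eq:ts}; the resulting identities should reduce exactly to \eqref{eq:t_unitality} and \eqref{eq:t_multiplicativity}. This is a somewhat long but entirely routine diagram chase. The converse implications are obtained by running the transposition back, using uniqueness (Yoneda) of the mate.

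For parts (3)--(5), I would invoke the explicit formulas recalled in \Cref{rem:closed_monoidal_category}\ref{item:closed3}: $\rho_X = i_{L_\tensUnit X}\circ\coev^\tensUnit_X$, the map $\lambda_X$ is characterised by $[X,\lambda_X]\circ\coev^X_\tensUnit = j_X$, and $\alpha_{X,Y,Z}$ is uniquely determined by \eqref{eq:alphaunique}. Substituting these characterisations into the proposed conditions \eqref{eq:trhoL}, \eqref{eq:tlambdaL}, \eqref{eq:talphaL} and transposing through the relevant adjunction should recover exactly \eqref{eq:siota}, \eqref{eq:sj}, \eqref{eq:sGamma}. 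For (3) this is nearly immediate after using the formula for $\rho$; for (4) one additionally combines the characterisation of $\lambda$ with the $T$-algebra structure on $M$ together with the compatibility $\mu_M \circ T\mu_M = \mu_M \circ m_M$.

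The main obstacle will be part (5). The condition \eqref{eq:sGamma} involves a composite of two instances of $\Gamma$ together with several instances of $s$, and the mate \eqref{eq:talphaL} correspondingly involves two $\alpha$'s and several $t$'s nested inside $L$- and $T$-applications. The computation requires strict bookkeeping of the order of $L$'s and $T$'s and repeated application of \eqref{eq:ts} together with the defining relation \eqref{eq:alphaunique} for $\alpha$. To conclude, I would use that $\alpha$ is the Yoneda representative of the natural isomorphism \eqref{eq:nat_transformation-associator}, so that any equation between morphisms of the form $L_Y TX \to T(L_{L_{TM}Y}X)$ can be verified after post-composing with appropriate $\ev$'s and transposing back across the adjunctions $L_- \dashv [-,\sim]$, reducing the claim to an already-established identity involving $\Gamma$ and $s$.
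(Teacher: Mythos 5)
Your plan coincides with the paper's own proof: the authors likewise treat parts (1)--(4) as a technical but routine transposition exercise resting on \eqref{eq:ts}, the dinaturality of $\ev$ and $\coev$, and the characterisations of $\rho$, $\lambda$, $\alpha$ from \cref{rem:closed_monoidal_category}, and for the genuinely hard part (5) they carry out exactly the strategy you outline --- transposing both sides of \eqref{eq:sGamma} across the iterated adjunctions $L_-\dashv[-,{\sim}]$ using \eqref{eq:ts} and \eqref{eq:alphaunique}, then reducing the resulting identity to \eqref{eq:talphaL} by naturality. The approach is correct and essentially identical to the paper's.
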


  \begin{proof}
		The proof is a technical but otherwise straightforward exercise.
    The dinaturality (see~\Cref{prop:ev_coev_dinatural}) of $\coev^X$ and $\ev^X$ is to
    be used.
		
		Nevertheless, since Claim \ref{item:Prop3.6(5)} is not elementary, let us sketch the underlying argument for the benefit of the reader. First of all, in view of the bijection 
	\[\cat\big(L_PL_{[P,X]}(T[TX,Y]), TY\big) \cong \cat\big(T[TX,Y], [[P,X],[P,TY]]\big)\]
	there exists a unique morphism $F$ on the left such that
	\[[[P,X],[P,F]] \circ [[P,X],\coev^P_{L_{[P,X]}T[TX,Y]}] \circ \coev^{[P,X]}_{T[TX,Y]} = \Gamma_{X,TY}^Z \circ s_{X,Y}\]
	and we claim that
	\[F = T(\ev^{TX}_Y) \circ T\big(L_{T(\ev^P_X)}[TX,Y] \big) \circ t_{[TX,Y],L_P[P,X]} \circ \alpha_{T[TX,Y],[P,X],P}.\]
	We check this by direct computation:
	{\small
	\begin{align*}
	&\begin{aligned}
	& [[P,X],[P,T(\ev^{TX}_Y)]] \circ [[P,X],[P,T(L_{T(\ev^P_X)}[TX,Y] )]] \circ [[P,X],[P,t_{[TX,Y],L_P[P,X]}]] ~ \circ \\
	& \circ ~ [[P,X],[P,\alpha_{T[TX,Y],[P,X],P}]] \circ [[P,X],\coev^P_{L_{[P,X]}T[TX,Y]}] \circ \coev^{[P,X]}_{T[TX,Y]} =
	\end{aligned} \\
	& \begin{aligned}
	\stackrel{\eqref{eq:alphaunique}}{\mathmakebox[\widthof{$\stackrel{\eqref{eq:ts}}{=}$}]{=}} ~
	& [[P,X],[P,T(\ev^{TX}_Y)]] \circ [[P,X],[P,T(L_{T(\ev^P_X)}[TX,Y] )]] \circ [[P,X],[P,t_{[TX,Y],L_P[P,X]}]] ~ \circ \\
	& \circ ~ [\coev_{[P,X]}^P,[P,L_{L_P([P,X])}T[TX,Y]]] \circ \Gamma_{L_P([P,X]),L_{L_P([P,X])}T[TX,Y]}^P \circ \coev_{T[TX,Y]}^{L_P([P,X])}
	\end{aligned} \\
	& \begin{aligned}
	\stackrel{(*)}{\mathmakebox[\widthof{$\stackrel{\eqref{eq:ts}}{=}$}]{=}} ~
	& [\coev_{[P,X]}^P,[P,TY]] \circ \Gamma_{L_P([P,X]),TY}^P \circ [L_P([P,X]),T(\ev^{TX}_Y)] ~ \circ \\ 
	& \circ ~ [L_P([P,X]),T(L_{T(\ev^P_X)}[TX,Y] )] \circ [L_P[P,X],t_{[TX,Y],L_P[P,X]}] \circ \coev_{T[TX,Y]}^{L_P([P,X])}
	\end{aligned} \\
	& \begin{aligned}
	\stackrel{\eqref{eq:ts}}{\mathmakebox[\widthof{$\stackrel{\eqref{eq:ts}}{=}$}]{=}} ~
	& [\coev_{[P,X]}^P,[P,TY]] \circ \Gamma_{L_P([P,X]),TY}^P \circ [L_P([P,X]),T(\ev^{TX}_Y)] ~ \circ \\ 
	& \circ ~ [L_P([P,X]),T(L_{T(\ev^P_X)}[TX,Y] )] \circ s_{L_P([P,X]),L_{T(L_P([P,X]))}[TX,Y]} \circ T\left(\coev_{[TX,Y]}^{T(L_P([P,X]))}\right)
	\end{aligned} \\
	& \begin{aligned}
	\stackrel{(*)}{\mathmakebox[\widthof{$\stackrel{\eqref{eq:ts}}{=}$}]{=}} ~
	& [\coev_{[P,X]}^P,[P,TY]] \circ \Gamma_{L_P([P,X]),TY}^P \circ s_{L_P([P,X]),Y} \circ T\big[T\big(L_P([P,X])\big),\ev^{TX}_Y\big] ~ \circ \\ 
	& \circ ~ T\big[T\big(L_P([P,X])\big),L_{T(\ev^P_X)}[TX,Y]\big] \circ T\left(\coev_{[TX,Y]}^{T(L_P([P,X]))}\right)
	\end{aligned} \\
	& \begin{aligned}
	\stackrel{(\star)}{\mathmakebox[\widthof{$\stackrel{\eqref{eq:ts}}{=}$}]{=}} ~
	& [\coev_{[P,X]}^P,[P,TY]] \circ \Gamma_{L_P([P,X]),TY}^P \circ s_{L_P([P,X]),Y} \circ T\big[T\big(L_P([P,X])\big),\ev^{T(L_P[P,X])}_Y\big] ~ \circ \\ 
	& \circ ~ T\big[T\big(L_P([P,X])\big),L_{T(L_P[P,X])}[T(\ev^P_X),Y]\big] \circ T\left(\coev_{[TX,Y]}^{T(L_P([P,X]))}\right)
	\end{aligned} \\
	& \begin{aligned}
	\stackrel{(*)}{\mathmakebox[\widthof{$\stackrel{\eqref{eq:ts}}{=}$}]{=}} ~
	& [\coev_{[P,X]}^P,[P,TY]] \circ \Gamma_{L_P([P,X]),TY}^P \circ s_{L_P([P,X]),Y} \circ T\big[T\big(L_P([P,X])\big),\ev^{T(L_P[P,X])}_Y\big] ~ \circ \\ 
	& \circ ~ T\left(\coev_{[T(L_P[P,X]),Y]}^{T(L_P([P,X]))}\right) \circ T\big[T(\ev^P_X),Y\big]
	\end{aligned} \\
	& \begin{aligned}
	\stackrel{\phantom{\eqref{eq:ts}}}{=} [\coev_{[P,X]}^P,[P,TY]] \circ \Gamma_{L_P([P,X]),TY}^P \circ s_{L_P([P,X]),Y} \circ T\big[T(\ev^P_X),Y\big] 
	\end{aligned} \\
	& \begin{aligned}
	\stackrel{(*)}{\mathmakebox[\widthof{$\stackrel{\eqref{eq:ts}}{=}$}]{=}} [\coev_{[P,X]}^P,[P,TY]] \circ T\big[T(\ev^P_X),Y\big] \circ \Gamma_{X,TY}^P \circ s_{X,Y} = \Gamma_{X,TY}^P \circ s_{X,Y}
	\end{aligned}
	\end{align*}
	}%
	where $(*)$ follow by naturality of the morphisms involved and $(\star)$ by dinaturality of $\ev$. Similarly, one can check that the unique morphism $G$ such that
	\begin{multline*}
	[[P,X],[P,G]] \circ [[P,X],\coev^P_{L_{[P,X]}T[TX,Y]}] \circ \coev^{[P,X]}_{T[TX,Y]} \\
	= [[\mu_P, X], s_{P, Y}] \circ s_{[TP, X], [TP, Y]} 
          \circ T[s_{P, X}, [\mu_P, Y]] \circ T \Gamma^P_{TX, Y}
	\end{multline*}
	is
	\[
	\begin{aligned}
	G = T(\ev^{TX}_Y) & \circ T\big(L_{T(\ev^P_X)}[TX,Y] \big) \circ T\Big(L_{T\big(L_{\mu_P^{(2)}}{[P,X]} \big)}{[TX,Y]} \Big) \circ T\left(L_{t_{{[P,X]},TP}}{[TX,Y]}\right) \circ \\
	& \circ T\left(\alpha_{{[TX,Y]},T{[P,X]},TP} \right) \circ t_{L_{T{[P,X]}}{[TX,Y]},P} \circ L_P\left(t_{[TX,Y],{[P,X]}}\right).
	\end{aligned}
	\]
	Therefore,
	\begin{equation}\label{eq:techtech}
	\begin{aligned}
	& T(\ev^{TX}_Y) \circ T\big(L_{T(\ev^P_X)}[TX,Y] \big) \circ t_{[TX,Y],L_P[P,X]} \circ \alpha_{T[TX,Y],[P,X],P} \\
	& \begin{aligned} = T(\ev^{TX}_Y) & \circ T\big(L_{T(\ev^P_X)}[TX,Y] \big) \circ T\Big(L_{T\big(L_{\mu_P^{(2)}}{[P,X]} \big)}{[TX,Y]} \Big) \circ T\left(L_{t_{{[P,X]},TP}}{[TX,Y]}\right) \circ \\
  & \circ T\left(\alpha_{{[TX,Y]},T{[P,X]},TP} \right) \circ t_{L_{T{[P,X]}}{[TX,Y]},P} \circ L_P\left(t_{[TX,Y],{[P,X]}}\right) \end{aligned}
	\end{aligned}
	\end{equation}
	for every $X,Y$ in $\cat$, $(P,\mu_P)$ in $\cat^T$. Now, 
	{\scriptsize
	\begin{align*}
	& t_{X,L_MY} \circ \alpha_{TX,Y,M} \\
	& \mathmakebox[\widthof{$\stackrel{\eqref{eq:techtech}}{=}$}]{=} T\left(\ev_{L_{TL_MY}X}^{TL_MY}\right) \circ t_{[TL_MY,L_{L_MY}X],L_MY} \circ \alpha_{T[TL_MY,L_{TL_MY}X],Y,M} \circ L_ML_YT\coev_{X}^{TL_MY}\\
	& \begin{aligned} \mathmakebox[\widthof{$\stackrel{\eqref{eq:techtech}}{=}$}]{=} ~ & T\left(\ev^{TL_MY}_{L_{TL_MY}X}\right) \circ T\big(L_{T(\ev^M_{L_MY})}[TL_MY,{L_{TL_MY}X}] \big) \circ t_{[TL_MY,{L_{TL_MY}X}],L_M[M,L_MY]} ~ \circ \\
	& \circ ~\alpha_{T[TL_MY,{L_{TL_MY}X}],[M,L_MY],M} \circ L_ML_{\coev^M_Y}T[T(L_MY),{L_{TL_MY}X}] \circ L_ML_YT\coev_{X}^{TL_MY}
	\end{aligned} \\
	& \begin{aligned} \stackrel{\eqref{eq:techtech}}{\mathmakebox[\widthof{$\stackrel{\eqref{eq:techtech}}{=}$}]{=}} ~ & T\left(\ev^{TL_MY}_{L_{TL_MY}X}\right) \circ T\big(L_{T(\ev^M_{L_MY})}[TL_MY,{L_{TL_MY}X}] \big) \circ T\Big(L_{T\big(L_{\mu^{(2)}_M}[M,L_MY]\big)}[TL_MY,L_{TL_MY}X]\Big) ~ \circ \\
	& \circ ~ T\left(L_{t_{[M,L_MY],TM}}[TL_MY,L_{TL_MY}X]\right) \circ T\left(\alpha_{[TL_MY,{L_{TL_MY}X}],T[M,L_MY],TM}\right) \circ t_{L_{T[M,L_MY]}[TL_MY,L_{TL_MY}X],M} ~ \circ \\ 
	& \circ ~ L_M\left(t_{[TL_MY,L_{TL_MY}X],[M,L_MY]}\right) \circ L_ML_{\coev^M_Y}T[T(L_MY),{L_{TL_MY}X}] \circ L_ML_YT\coev_{X}^{TL_MY}
	\end{aligned} \\
	& \begin{aligned} \stackrel{(*)}{\mathmakebox[\widthof{$\stackrel{\eqref{eq:techtech}}{=}$}]{=}} ~ & T\left(\ev^{TL_MY}_{L_{TL_MY}X}\right) \circ  T\Big(L_{T\big(L_{\mu^{(2)}_M}Y\big)}[TL_MY,L_{TL_MY}X]\Big) \circ T\left(L_{t_{Y,TM}}[TL_MY,L_{TL_MY}X]\right) ~ \circ \\
	& \circ ~ T\left(\alpha_{[TL_MY,{L_{TL_MY}X}],TY,TM}\right) \circ t_{L_{TY}[TL_MY,L_{TL_MY}X],M} \circ L_M\left(t_{[TL_MY,L_{TL_MY}X],Y}\right) \circ L_ML_YT\coev_{X}^{TL_MY}
	\end{aligned} \\
	& \begin{aligned} \stackrel{(*)}{\mathmakebox[\widthof{$\stackrel{\eqref{eq:techtech}}{=}$}]{=}} T\Big(L_{T\big(L_{\mu^{(2)}_M}Y\big)}X\Big) \circ T\left(L_{t_{Y,TM}}X\right) \circ T\left(\alpha_{X,TY,TM}\right) \circ t_{L_{TY}X,M} \circ L_M\left(t_{X,Y}\right),
	\end{aligned}
	\end{align*}
	}%
	where $(*)$ follow by naturality again. We leave to the interested reader to check that also the other implication holds. \qedhere
  \end{proof}
	
	\begin{remark}\label{rem:rephrasingt}
	Let us take advantage of the convention $X \otimes Y \coloneqq L_Y(X)$ as we did in \cref{ssec:skewclosed}. Then $t_{X,Y} \colon TX \otimes Y \to T(X \otimes TY)$. With this notation, \eqref{eq:t_unitality} becomes
	\begin{equation}\label{eq:t_unitality_tens}
	t_{X,Y} \circ (u_X \otimes Y) = u_{X \otimes TY} \circ (X \otimes u_Y)
	\end{equation}
	and \eqref{eq:t_multiplicativity} becomes
	\begin{equation}\label{eq:t_multiplicativity_tens}
	t_{X,Y} \circ (m_X \otimes Y) = T(X \otimes m_Y) \circ m_{X \otimes T^2Y} \circ Tt_{X,TY} \circ t_{TX,Y}.
	\end{equation}
	Analogously, \eqref{eq:trhoL}, \eqref{eq:tlambdaL} and \eqref{eq:talphaL} become
	\begin{gather}
	T(X \otimes \mu_\tensUnit) \circ t_{X,\tensUnit} \circ \rho_{TX} = T\rho_X, \label{eq:trho} \\
	\lambda_M \circ (\mu_\tensUnit \otimes M) = \mu_M \circ m_M \circ T(\lambda_{TM}) \circ t_{\tensUnit,M}, \label{eq:tlambda} \\
	\begin{aligned}
	t_{X,Y \otimes M} \circ \alpha_{TX,Y,M} ~=~ & T\big(X \otimes T(Y \otimes \mu_M^{(2)})\big) \circ T\left(X \otimes t_{Y,TM}\right) \circ T\left(\alpha_{X,TY,TM} \right) \\ & ~\circ~ t_{X \otimes TY,M} \circ \left(t_{X,Y} \otimes M\right)
	\end{aligned} \label{eq:talpha}
	\end{gather}
	respectively, for all $X,Y$ in $\cat$ and $(M,\mu_M)$ in $\cat^T$ and where $\mu_M^{(2)}$ denotes $\mu_M \circ m_M = \mu_M \circ T\mu_M$.
	\end{remark}

  Let us now write the action of $T$ on $[-, -]$ in terms of $t$, for later use.
  To this end, recall that given $T$-algebras $(M, \mu_M)$ and $(N, \mu_N)$, the object $[M, N]$
  becomes a $T$-algebra with action $\mu_M \star \mu_N = [M, \mu_N] \circ s_{M, N} \circ T [\mu_M, N]$.
  In terms of $t$, one finds that the $T$-algebra structure is the composition
  \begin{equation}
	\begin{gathered}
    \xymatrix @C=50pt{
		T[M, N] \ar@{.>}[d]_-{\mu_{[M,N]}} \ar[r]^-{\coev^M_{T[M, N]}} & [M, L_M T[M, N]] \ar[r]^-{[M, t_{[M,N],M}]} & [M, T L_{TM} [M, N]] \ar[d]^-{[M, T L_{\mu_M} [M, N]]} \\
    [M, N] & [M, TN] \ar[l]^-{[M, \mu_N]} & [M, T L_M [M, N]] \ar[l]^-{[M, T \ev^M_N]} \ .
		}
		\end{gathered}
    \label{eq:T_action_using_t}
  \end{equation}

	\begin{remark}
	Continuing \cref{rem:gamma2}, the mate $\tau$ of $\gamma$ (and in this case $\tau^M$ is exactly the mate of $\gamma^M$ in the sense of \cite[\S6.1, page 186]{Leinster}, for every $M$ in $\cat^T$) is the natural transformation $\tau^M_X \colon L_MT(X) \to TL_M(X)$ given by
	\[L_MT(X) \xrightarrow{L_MT\coev^M_X} L_MT[M,L_M(X)] \xrightarrow{L_M\gamma^M_{L_M(X)}} L_M[M,TL_M(X)] \xrightarrow{\ev^M_{TL_M(X)}} TL_M(X).\]
	The natural transformation $\gamma$ satisfies \eqref{eq:gamma1} and \eqref{eq:gamma2} if and only if $\tau$ satisfies
	\[\tau_{X}^M \circ (u_X \otimes M) = u_{X \otimes M} \qquad \text{and} \qquad \tau^M_X \circ (m_X \otimes M) = m_{X \otimes M} \circ T\tau_X^M \circ \tau^M_{TX},\]
	respectively.
	In terms of $\tau$, the $T$-algebra structure on $[M,N]$ looks like
	\[
	\xymatrix @C=40pt @R=18pt {
	T[M,N] \ar@{.>}[dd]_-{\mu_{[M,N]}} \ar[r]^-{\coev_{T[M,N]}^M} & [M,T[M,N] \otimes M] \ar[d]^-{[M,\tau^M_{[M,N]}]} \\
	& [M,T([M,N]\otimes M)] \ar[d]^-{[M,T(\ev_N^M)]} \\
	[M,N] & [M,TN] \ar[l]^-{[M,\mu_N]} 
	}
	\]
	(by replacing $\tau$ with its definition, we find the action via $\gamma$). 
	It follows that $\gamma$ satisfies \eqref{eq:gammai} and \eqref{eq:gammaj} if and only if $\tau$ satisfies
	\begin{gather*}
	T \rho_X = \tau_X^{\tensUnit} \circ \rho_{TX}, \\
	\lambda_M \circ (\mu_\tensUnit \otimes M) = \mu_M \circ T\lambda_M \circ \tau_M^\tensUnit,
	\end{gather*}
	respectively. Furthermore, $\gamma$ satisfies \eqref{eq:gammaGamma} if and only if $\tau$ satisfies
	\[
	\tau_X^{T(Y \otimes M)} \circ \big(TX \otimes \tau_Y^M\big) \circ \alpha_{TX,TY,M} = T\big(X \otimes \tau^M_Y\big) \circ T \big(\alpha_{X,TY,M}\big) \circ \tau_{X \otimes TY}^M \circ \big(\tau_X^{TY} \otimes M\big)
	\]
	for all $X,Y$ in $\cat$ and $(M,\mu_M)$ in $\cat^T$.
	\end{remark}

  \begin{example}
    \label{ex:Hopf_monad_2}
    Let $T$ be a left Hopf monad on the closed monoidal category $\cat$, with left fusion
    operator $H^l_{A, B} \colon T(A \tensor T B) \to TA \tensor TB$.
    Choose $L_X = - \tensor X$.
    Then the natural transformation defined as the composition
    \begin{align*}
      t_{A,B}\colon 
        L_B TA = TA \tensor B \xrightarrow{TA \tensor u_B}
        TA \tensor TB \xrightarrow{(H^l)\inv_{A, B}}
        T(A \tensor TB) = T L_{T B} A
    \end{align*}
    satisfies unitality \eqref{eq:t_unitality} and multiplicativity \eqref{eq:t_multiplicativity} as in \Cref{prop:recasting_s_to_t}.
    This can be seen from the formulas in \cite[Propositions 3.8 and 3.9]{BLV-hopf_monads}.
    We can re-express $(H^l)\inv_{A, B}$ in terms of $t$ as the composition
    \begin{align*}
      TA \tensor TB \xrightarrow{t_{A,TB}}
      T(A \tensor T^2 B) \xrightarrow{T(A \tensor m_B)}
      T(A \tensor T B)
      \ .
    \end{align*}
    This observation implicitly appeared in \cite[Proposition 3.9]{BLV-hopf_monads}.
  \end{example}

\subsection{Set-theoretic solutions}
\label{ssec:gabi-algebras_in_set}

  We continue \Cref{ex:set_is_closed}, the cartesian closed category $\Set$.
	As we are used to, take $L_X \coloneqq - \times X$.
  Let $M$ be an algebra in $\Set$, i.e.\ a monoid, and consider the monad $T = M \times
  -$.
  We write multiplication in $M$ by juxtaposition, the unit is $1$. The action of $M$ on an $M$-set $U$ will be denoted by $M \times U \to U, (m,u) \mapsto m.u$.

  Since an element in a set $S$ is the same thing as function $* \to S$ from the point, a
  natural transformation $t_{X, Y} \colon (M \times X) \times Y \to M \times (X \times (M \times Y))$ as in \eqref{eq:t_from_s} is completely determined by its value $t_{*, *}$.
  Thus, such a natural transformation is completely determined by a function $\delta
  \colon M \to M \times M$, $m \mapsto (m_+, m_-)$, via $t_{X, Y}(m, x, y) = (m_+, x, m_-, y)$.

  Unitality of $t$ means that
  \begin{align*}
    (1_+, x, 1_-, y)
    &= \big(t_{X, Y} \circ (u_X \times Y)\big)(x, y)
    ~\oversetEq[{\eqref{eq:t_unitality_tens}}]~
    \big(u_{X \times M \times Y} \circ (X \times u_Y)\big)(x, y)
    = (1, x, 1, y)
    \ ,
  \end{align*}
  while multiplicativity means
  \begin{align*}
    & ((mn)_+, x, (mn)_-, y) = \big(t_{X, Y} \circ (m_X \times Y)\big)(m, n, x, y)
    \\ & \stackrel{\eqref{eq:t_multiplicativity_tens}}{=} 
    ((M \times X \times m_Y) \circ m_{X \times M \times M \times Y} \circ (M \times t_{X, M \times Y}) \circ t_{M \times X, Y})(m, n, x, y)
    \\ & \stackrel{\phantom{\eqref{eq:t_multiplicativity_tens}}}{=}
    (m_+ n_+, x, n_- m_-, y)
  \end{align*}
	for all sets $X,Y$, all $x \in X$, $y \in Y$ and $m,n \in M$.
  Thus we get a lift of the inner hom functor of $\Set$ if and only if $\delta \colon M \to M \times
  M\op$ is a monoid map.
  Here $M\op$ is the opposite monoid, i.e.\ the set $M$ with multiplication $m \cdot n =
  n m$ and unit element $1$.

  Given two $M$-sets $U$ and $V$, from \eqref{eq:T_action_using_t} one computes the action of $M$ on $[U, V] = \Set(U, V)$
  to be
	\[(m. f)(u) = m_+ . f (m_- . u)\]
	for all $m \in M$, $u \in U$, $f \in \Set(U,V)$.
  The unique function $\counit \colon M \to *$ to the terminal object serves as an
  augmentation: it is automatically an algebra map.
	
  To lift the full \emph{skew-closed} structure of $\Set$ to $\prescript{}{M}{\Set} = \Set^{M
  \times -}$, we need the transformations $i, j, \Gamma$ specified in
  \Cref{ex:set_is_closed} to be $M$-module morphisms.
	To this aim, $t$ satisfies \eqref{eq:trho} (up to identifying $X \times *$ with $X$) if and only if
	\begin{align*}
	(m_+, x) & = \big((M \times X \times \counit) \circ t_{X,*} \circ \rho_{M \times X}\big)(m,x) \\
	& = (M \times \rho_X)(m,x) = (m,x)
	\end{align*}	
	for every set $X$, all $x \in X$ and all $m \in M$,
	which is equivalent to $\delta(m) = (m, m_-)$. Furthermore, $t$ satisfies \eqref{eq:tlambda} (up to identifying $* \times X$ with $X$) if and only if
	\[
		u = \big(\lambda_U \circ (\counit \times U)\big)(m,u) = \big(\mu_U \circ m_U \circ (M \times \lambda_{M \times U}) \circ t_{*,U}\big)(m,u) = m_+m_-.u
	\]
	for each $M$-set $U$ and for every $u \in U$.
  This yields the necessary and sufficient condition $m_+ m_- = 1$ for all $m \in M$.

  Before discussing $\Gamma$, we note that this already forces $M$ to be a group,
  possibly subject to extra conditions imposed by $\Gamma$.
  Indeed, $m m_- = m_+ m_- = 1$ says that every element $m \in M$ has a right inverse.
  In particular, for a given $m$, we have that $m_-$ has $m_{--}$ as right inverse.
  Since $M$ is associative, the simple computation
  \begin{align*}
    m = m (m_- m_{--}) = (m m_-) m_{--} = m_{--}
  \end{align*}
  shows that $m$ is a two-sided inverse of $m_-$, and so every element of $M$ is
  invertible.

	For $\Gamma$, we have that $t$ satisfies \eqref{eq:talpha} if and only if
	\begin{align*}
	(m_+,x,m_-,y,u) & \stackrel{\phantom{\eqref{eq:talpha}}}{=} \big(t_{X,Y \times U} \circ \alpha_{M \times X, Y , U}\big)(m,x,y,u) \\
	& \begin{aligned}
	~ \stackrel{\eqref{eq:talpha}}{=} ~ & \Big(\big(M \times X \times M \times Y \times \mu_U^{(2)}\big) \circ (M \times X \times t_{Y,M \times U}) \circ (M \times \alpha_{X,TY,TU}) \\ 
		& ~\circ~ t_{X \times M \times Y,U} \circ \left(t_{X,Y} \times U\right) \Big)(m,x,y,u)
	\end{aligned} \\
	& \stackrel{\phantom{\eqref{eq:talpha}}}{=} (m_{++},x,m_{-+},y,m_{--}m_{+-}.u)
	\end{align*}
	for all sets $X,Y$, all $M$-sets $(U,\mu_U)$, and for every $x \in X$, $y \in Y$, $u \in U$ and $m \in M$. This leads to $(m_{++}, m_{-+}, m_{--} m_{+-}) = (m_+, m_-, 1)$ for all $m \in M$.	

  Finally, this last condition is also satisfied for groups, i.e.\ if $(m_+, m_-) = (m,
  m\inv)$.
  Indeed,
  \begin{align*}
    (m_{++}, m_{-+}, m_{--} m_{+-})
    = (m, m\inv, (m\inv)\inv m\inv)
    = (m, m\inv, 1)
    = (m_+, m_-, 1)
    \ .
  \end{align*}

  Similar considerations can be done for the monad $- \times M$ and we summarize this
  example in the following \nameCref{prop:gabi-monoids_in_set_are_groups}.

  \begin{theorem}
    \label{prop:gabi-monoids_in_set_are_groups}
    Let $M \in \Set$ be a monoid.
    The following are equivalent:
    \begin{enumerate}[leftmargin=0.8cm]
      \item
        $M$ is a group.

      \item
        The category $\prescript{}{M}{\Set}$ $(\Set_M)$ of left (right) $M$-sets is
        left skew-closed such that the forgetful functor to $\Set$ is strictly closed.
    \end{enumerate}
  \end{theorem}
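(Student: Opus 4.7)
The plan is to assemble the equivalence directly from the computations carried out in the paragraphs right above the statement. There it is already established that, by \cref{eq:sufficient_condition_for_lifting_closed_structure_to_EM} and \cref{prop:recasting_s_to_t} (in the $t$-mate reformulation of \cref{rem:rephrasingt}), a strictly closed lifting of $\Set$ along $\prescript{}{M}{\Set}\to \Set$ is the same thing as a function $\delta\colon M\to M\times M$, $m\mapsto (m_+,m_-)$, satisfying: $(a)$ $\delta$ is a monoid homomorphism $M\to M\times M\op$ (equivalent to \eqref{eq:t_unitality_tens} and \eqref{eq:t_multiplicativity_tens}); $(b)$ $m_+ = m$ (from \eqref{eq:trho}); $(c)$ $m_+m_- = 1$ (from \eqref{eq:tlambda}); and $(d)$ $(m_{++},m_{-+},m_{--}m_{+-})=(m_+,m_-,1)$ (from \eqref{eq:talpha}). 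The unique augmentation $M\to *$ is automatically a monoid map, so nothing more about the unit object needs to be checked.

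For the implication $(2)\Rightarrow (1)$, I would just replay the associativity argument already sketched before the theorem statement. Conditions $(b)$ and $(c)$ combine to give $mm_- = 1$ for every $m\in M$; applying this to $m_-$ in place of $m$ also yields $m_-m_{--}=1$, and then
\[m = m(m_-m_{--}) = (mm_-)m_{--} = m_{--},\]
so $m_- m = m_- m_{--} = 1$. Hence $m_-$ is a two-sided inverse of $m$ and $M$ is a group.

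For the reverse implication $(1)\Rightarrow (2)$, I would set $\delta(m):=(m,m\inv)$: anti-multiplicativity of inversion turns $\delta$ into a monoid map $M\to M\times M\op$, so $(a)$ holds, $(b)$ and $(c)$ are immediate, and $(d)$ boils down to $(m\inv)\inv m\inv = 1$, which is trivial. Strict closedness of the forgetful functor is then granted by the very way the lifting is obtained from $\delta$. The parallel statement for $\Set_M$ is handled symmetrically, using the monad $-\times M$ and a monoid map $M\to M\op\times M$; the same formula $\delta(m)=(m\inv,m)$ does the job.

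The only conceptually nontrivial point is the direction $(2)\Rightarrow (1)$, because a right inverse in a general monoid need not be two-sided; what rescues the argument is precisely the ``second iterate'' information encoded in $(d)$, which forces $m_{--}=m$ and thus promotes the right inverse $m_-$ to a genuine inverse. Everything else is bookkeeping on the conditions already derived in this subsection.
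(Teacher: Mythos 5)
Your proof is correct and follows essentially the same route as the paper: the same translation of the lifting into conditions $(a)$--$(d)$ on $\delta$, the same right-inverse-implies-group computation for $(2)\Rightarrow(1)$, and the same verification of the four conditions for $\delta(m)=(m,m^{-1})$ in the converse direction. One small caveat on your closing commentary: you attribute the step $m_{--}=m$ to the ``second iterate'' condition $(d)$, but your own argument (like the paper's) derives it purely from $(c)$ applied to the element $m_-$ --- indeed the paper explicitly observes that $M$ is forced to be a group \emph{before} the $\Gamma$-condition $(d)$ is even brought into play.
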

  
\subsection{The linear case}
\label{ssec:gabi-algebras_in_kmod}
  We now look at solutions in $\hmodM[k]$ for a commutative ring $k$, following up on
  \Cref{ex:k-mod_is_closed}.
  Here $L_M = - \tensor M$.

  For a $k$-algebra $A$, the functor $T = A \tensor -$ is a monad on
  $\hmodM[k]$ in a natural way.
  We need to exhibit a natural $k$-module map $t_{M, N} \colon (A \otimes M)
  \tensor N \to A \otimes (M \tensor (A \tensor N))$ with certain properties.
  Note that such a map is determined by $t_{k, k}$:
  we have
  \begin{align*}
    t_{M, N}(a \tensor m \tensor n)
    &= \big(t_{M, N} \circ (A \tensor c_M(m) \tensor c_N(n))\big) (a \tensor 1 \tensor 1)
    \\
    &= \big((A \tensor c_M(m) \otimes A \tensor c_N(n)) \circ t_{k, k}\big) 
    (a \tensor 1 \tensor 1)
  \end{align*}
  by naturality, where $c_M(m) \colon k \to M$ is the unique $k$-linear map such that $c_M(m)(1_k) = m$ for all $m \in M$.
  We define, in a Sweedler-like notation,
	\[\delta \colon A \to A \tensor A, \qquad a \mapsto a_+ \otimes a_-,\] 
	(summation understood) to be the obvious map such that $t_{k,k}(a) = a_+ \tensor 1 \tensor a_- \tensor 1$.

  From here on, one carries out, \emph{mutatis mutandis}, the computations from
  \cref{ssec:gabi-algebras_in_set}. That is, one declines the formulas from \cref{rem:rephrasingt} in this specific situation.
  We leave the details to the reader, and instead just state:

  \begin{theorem}[\cite{Boehm-private}]
    \label{prop:gabi_algebra_defining_properties}
    Let $A$ be an algebra over a commutative ring $k$.
    Then the closed structure of $\hmodM[k]$ lifts to a left skew-closed structure on $\hmodM[A]$ if and only if:
    \begin{enumerate}[leftmargin=0.8cm]
      \item
        $(A, \counit)$ is an augmented $k$-algebra,
      \item
        there exists an algebra map $\delta \colon A \to A \tensor A\op$, $\delta(a) =
        a_+ \tensor a_-$ (summation understood), 
    \end{enumerate}
		and they satisfy
		\begin{gather}
		a_+ \counit(a_-) = a \label{eq:gabi_algebra_counitality} \tag{GA1} \\
		a_+ a_- = \counit(a) 1_A \label{eq:gabi_algebra_antipode_axioms} \tag{GA2} \\
		a_{++} \tensor a_{-+} \tensor a_{--} a_{+-} = a_+ \tensor a_- \tensor 1 \label{eq:gabi_algebra_associativity} \tag{GA3}
		\end{gather}
    In this case, $A$ acts on $\hmodM[k](M, V)$ as $(a . f)(m) = a_+ f (a_- m)$.
  \end{theorem}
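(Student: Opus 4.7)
The strategy is to specialize the general lifting criterion from \Cref{eq:sufficient_condition_for_lifting_closed_structure_to_EM}, rewritten via the mate formulation of \Cref{prop:recasting_s_to_t} and \Cref{rem:rephrasingt}, to the monad $T = A \otimes -$ on $\hmodM[k]$. The argument is the linear analogue of the one already carried out in \cref{ssec:gabi-algebras_in_set} for $\Set$, with the computations performed internally in $\hmodM[k]$ rather than in $\Set$.

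First I would identify the basic data. A $T$-algebra structure $\mu_\tensUnit \colon A \otimes k \to k$ on the closed unit $k$ amounts, via the canonical isomorphism $A \otimes k \cong A$, to a unital algebra map $\counit \colon A \to k$, yielding item (i). By $k$-linearity and naturality, any candidate natural transformation $t_{M,N} \colon (A \otimes M) \otimes N \to A \otimes (M \otimes (A \otimes N))$ is entirely determined by the element $t_{k,k}(a \otimes 1 \otimes 1)$, which I write in Sweedler-like form as $a_+ \otimes 1 \otimes a_- \otimes 1$; this extracts a $k$-linear map $\delta \colon A \to A \otimes A$, $a \mapsto a_+ \otimes a_-$, which is the content of item (ii).

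Next I would evaluate the five conditions of \cref{rem:rephrasingt} on generators. Unitality \eqref{eq:t_unitality_tens} becomes $\delta(1_A) = 1_A \otimes 1_A$ and multiplicativity \eqref{eq:t_multiplicativity_tens} becomes $(ab)_+ \otimes (ab)_- = a_+ b_+ \otimes b_- a_-$, so together they say exactly that $\delta \colon A \to A \otimes A\op$ is a unital algebra map. Condition \eqref{eq:trho}, evaluated on $a \otimes 1 \in A \otimes k$, becomes $a_+ \counit(a_-) = a$, which is \eqref{eq:gabi_algebra_counitality}; condition \eqref{eq:tlambda}, evaluated on the regular module $M = A$ at $a \otimes 1_A$, becomes $a_+ a_- = \counit(a) 1_A$, which is \eqref{eq:gabi_algebra_antipode_axioms}; and condition \eqref{eq:talpha} becomes \eqref{eq:gabi_algebra_associativity}. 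Finally, the explicit $A$-action $(a.f)(m) = a_+ f(a_- m)$ on $[M,V] = \hmodM[k](M,V)$ follows directly by unwinding \eqref{eq:T_action_using_t} in this setting.

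The only step that really requires care is the translation of \eqref{eq:talpha} into \eqref{eq:gabi_algebra_associativity}, since one must unpack the associator together with the doubly-applied action $\mu_M^{(2)} = \mu_M \circ m_M$ and keep track of the two occurrences of $\delta$. However, since \eqref{eq:talpha} is natural in $M$ and $k$-linear in $a$, it suffices to test it on the regular module $M = A$ applied to the universal element $a \otimes 1 \otimes 1 \otimes 1$; this collapses the equation to a universal identity in $A \otimes A \otimes A$, recovering precisely \eqref{eq:gabi_algebra_associativity}. All remaining verifications are formally identical to their set-theoretic counterparts and can be safely left to the reader.
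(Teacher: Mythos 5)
Your proposal is correct and follows exactly the route the paper itself takes: specialize \Cref{eq:sufficient_condition_for_lifting_closed_structure_to_EM} via the mate formulation of \Cref{prop:recasting_s_to_t} and \Cref{rem:rephrasingt} to the monad $T = A\otimes -$, determine $t$ by $t_{k,k}$ to extract $\delta$, and read off \eqref{eq:gabi_algebra_counitality}--\eqref{eq:gabi_algebra_associativity} from \eqref{eq:trho}, \eqref{eq:tlambda}, \eqref{eq:talpha} by testing on the unit object and the regular module, exactly as in the set-theoretic case of \cref{ssec:gabi-algebras_in_set}. In fact you supply more detail than the paper, which explicitly leaves these verifications to the reader.
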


  \begin{remark}\label{rem:varia}
  \begin{enumerate}[label=(\arabic*),ref=(\arabic*),leftmargin=0.8cm]
  \item
  Given the lifted skew-closed structure on $\hmodM[A]$, we can obtain the gabi-algebra structure on $A$ explicitly in the following way. The fact that $k$ is the unit of the closed structure makes that $k$ must be endowed with a left $A$-module structure. The augmentation map $\counit$ is then given by $\counit(a)=a\cdot 1_k$. Now consider the left $A$-modules $.A\otimes A$ and $A\ot .A$, where the dot indicates how $A$ acts regularly on the first and second tensorand, respectively. Since the closed structure is lifted from $\hmodM[k]$, we have that $[A\otimes .A,.A\ot A]=\hmodM[k](A\otimes .A,.A\otimes A)$, which contains the identity map $\id \colon A\otimes A\to A\otimes A$. Then the comultiplication map $\delta$ is given by 
  $$\delta(a)=(a.\id) (1_A\ot 1_A).$$
  \item
    \label{rem:gabi_algebras_opposite_version}
    The above \namecref{prop:gabi_algebra_defining_properties} details the case of left
    modules, i.e.\ we looked at algebras over the monad $A \tensor -$.
    This choice was arbitrary: we could have chosen the monad $- \tensor A$, whose
    algebras are exactly the right $A$-modules.
    One then finds that lifting the skew-closed structure of $\hmodM[k]$ to $\hmodM[][A]$ is
    equivalent to $A$ being augmented via $\counit$, and equipped with an algebra map
    $\delta' \colon A \to A\op \tensor A$, $a \mapsto a_- \tensor a_+$, satisfying similar
    properties as above.
    In detail, we have
		\begin{gather}
		\counit(a_-) a_+ = a, \tag{GA1'} \label{eq:gabi_algebra_counitality_bis} \\
		a_- a_+ = \counit(a) 1_A, \tag{GA2'} \label{eq:gabi_algebra_antipode_axioms_bis} \\
		a_{+-} a_{--} \tensor a_{-+} \tensor a_{++} = 1 \tensor a_- \tensor a_+ \tag{GA3'} \label{eq:gabi_algebra_associativity_bis}
		\end{gather}
    for all $a \in A$.
    In this case, $A$ acts on $\hmodM[k](M, V)$ as $(f . a)(m) = f(m a_-) a_+$.
  \item
  	Of course, one can work with coalgebras and formally dualise \eqref{eq:gabi_algebra_counitality}, \eqref{eq:gabi_algebra_antipode_axioms} and \eqref{eq:gabi_algebra_associativity} from \Cref{prop:gabi_algebra_defining_properties} to obtain what we would call a \emph{left gabi-coalgebra}, or \eqref{eq:gabi_algebra_counitality_bis}, \eqref{eq:gabi_algebra_antipode_axioms_bis} and \eqref{eq:gabi_algebra_associativity_bis} from \ref{rem:gabi_algebras_opposite_version} above to obtain a \emph{right gabi-coalgebra}. 
	
	As for algebras, a gabi-coalgebra structure on a given coalgebra leads to additional structure on its category of comodules. 
	For example, let $C$ be a right gabi-coalgebra, that is a coalgebra $C$ endowed with two coalgebra maps, a unit $1_C\colon k \to C$ and a multiplication $\nabla \colon C \ot C^{\mathrm{cop}}\to C$, $c \otimes d \mapsto c.d$, satisfying
  	\begin{gather}
  	 c.1_C = c, \tag{GC1'} \\
  	 c_{(1)}.c_{(2)}=\varepsilon(c)1_C, \tag{GC2'} \\
  	 \epsilon(e)c.d = \big(c.e_{(2)}\big).\big(d.e_{(1)}\big). \tag{GC3'}
  	\end{gather}
		Consider two right comodules $N$ and $P$. Then the $k$-linear hom space between them can be endowed with the following structure map
	\[
	\xymatrix @R=0pt{
	\hmodM[k](N,P)  \ar[r] & \hmodM[k](N,P \ot C) \\
	\varphi \ar@{|->}[r] & \bigg(n \mapsto \sum_i \varphi\big({n}_0\big)_0 \ot \varphi\big({n}_0\big)_1.{n}_1 \bigg)
	}
	\]
	If $N$ is moreover finite-dimensional, then we have a natural $k$-linear isomorphism $$\hmodM[k](N,P \ot C)\cong \hmodM[k](N,P) \ot C$$ and combined with this isomorphism, the above map endows $\hmodM[k](N,P)$ with a right $C$-comodule structure. In particular, this will endow the category of all finite-dimensional right $C$-comodules with a skew closed structure in such a way that the forgetful functor to finite-dimensional vector spaces strictly preserves the closed structure. 
	
	Using classical Tannaka duality, the above reasoning can be reversed. First, recall that for any finite-dimensional right comodule $N$ over an arbitrary coalgebra $C$, $N^*$ is a left $C$-comodule with respect to
	\[N^* \to C \ot N^*, \qquad f \mapsto \sum f\big({e_i}_0\big){e_i}_1 \ot e^i,\]
	where $\sum_i e_i \ot e^i$ is a dual basis of $N$, and so it is a right $C^{\mathrm{cop}}$-comodule with respect to
	\[N^* \to N^* \ot C^{\mathrm{cop}}, \qquad f \mapsto \sum e^i \ot f\big({e_i}_0\big){e_i}_1.\]
	This shows that the contravariant auto-equivalence on the category of finite-dimensional vector spaces given by taking the linear dual lifts to an contravariant equivalence between category of finite-dimensional right $C$-comodules and the category of finite-dimensional right $C^{\mathrm{cop}}$-comodules. Suppose now that the category ${\mathcal M}^C_{\mathrm{fd}}$ of finite-dimensional right $C$-comodules is skew closed in such a way that the forgetful functor to finite-dimensional vector spaces strictly preserves the closed structure. Then, by applying the above equivalence, the internal hom functor
	$$[-,-]:({\mathcal M}^C_{\mathrm{fd}})^{\mathrm{op}} \times {\mathcal M}_{\mathrm{fd}}^C \to {\mathcal M}^C_{\mathrm{fd}}$$
	can be rewritten as
	$$[-,-]:{\mathcal M}^{C^{\mathrm{cop}}}_{\mathrm{fd}} \times {\mathcal M}_{\mathrm{fd}}^C \to {\mathcal M}^C_{\mathrm{fd}}$$
	which translates by Tannaka duality to a coalgebra morphism
	$$\nabla:C\otimes C^{\mathrm{cop}}\to C$$
	which will endow $C$ with a right gabi-coalgebra structure.
	\qedhere
\end{enumerate}
  \end{remark}

\section{Gabi-algebras: basic properties and examples}\label{sec:gabialgebras_props_examples}

  For the rest of the paper, we will be concerned with the situation of
  \Cref{prop:gabi_algebra_defining_properties}.

\subsection{Definition and examples}\label{ssec:defandex}
  Let us now define the main character of the paper.
  \begin{definition}
    An algebra $A$ over $k$ satisfying the equivalent conditions of
    \Cref{prop:gabi_algebra_defining_properties} is called a \emph{left gabi-algebra}. 
    If $A$ satisfies the conditions from \Cref{rem:varia}\ref{rem:gabi_algebras_opposite_version}, then
    we call it a \emph{right gabi-algebra}. If $A$ is a gabi-algebra such that the lifted skew-closed structure on $\hmodM[A]$ is normal, then we call $A$ a {\em normal gabi-algebra}.
  \end{definition}

  Henceforth, we focus on the left-hand side case and we call left gabi-algebras simply
  \emph{gabi-algebras}.

  \begin{example}[Hopf algebras]
    \label{ex:hopf_algebras_are_gabi}
    Let $H$ be a Hopf algebra over $k$.
    Then $\delta(h) = h\sweedler{1} \tensor S(h\sweedler{2})$ turns $H$ into a
    gabi-algebra.
    Indeed, $H$ is augmented;
    $h\sweedler{1} \counit(S(h\sweedler{2})) = h$ holds since $S$ is an anti-coalgebra
    map and $\Delta$ is (right) counital;
    $h\sweedler{1} S(h\sweedler{2}) = \counit(h) 1$ holds since $S$ is a (right)
    convolution inverse of the identity;
    and finally
    \begin{align*}
      h\sweedler{1,1} \tensor & S(h\sweedler{2})\sweedler{1}
      \tensor S(S(h\sweedler{2})\sweedler{2}) S(h\sweedler{1,2})
      = h\sweedler{1,1} \tensor S(h\sweedler{2})\sweedler{1}
      \tensor S(h \sweedler{1, 2} S(h\sweedler{2})\sweedler{2}) 
      \\
      &= h\sweedler{1,1} \tensor S(h\sweedler{2,2})
      \tensor S(h\sweedler{1,2} S(h\sweedler{2,1}))
      = h\sweedler{1} \tensor S(h\sweedler{4})
      \tensor S(h\sweedler{2} S(h\sweedler{3}))
      \\
      &= 
      h\sweedler{1} \tensor S(h\sweedler{2}) \tensor 1
      \ .
    \end{align*}
    If the antipode of $H$ is invertible, then also the map $h \mapsto h\sweedler{2}
    \tensor S\inv(h\sweedler{1})$ turns $H$ into a
    gabi-algebra. 
    The choice of using the antipode or the inverse antipode corresponds to lifting
    either the (entire) right or left closed monoidal structure to the category of left
    modules.
    Evidently, $h \mapsto S(h\sweedler{1}) \tensor h\sweedler{2}$ yields a right
    gabi-algebra structure.
  \end{example}
	
	\begin{example}[one-sided Hopf algebras]\label{ex:onesidedHopf}
	In \cref{ex:hopf_algebras_are_gabi} we did not use the fact the antipode $S$ is a left
    convolution inverse of the identity.
    This suggests another class of examples, namely one-sided Hopf algebras.
		
    A \emph{right} (resp., \emph{left}) \emph{Hopf algebra} is a bialgebra $B$ in which the identity has a right
    (resp., left) convolution inverse.
    A right Hopf algebra whose right antipode is anti-multiplicative and 
		anti-comultiplicative carries the structure of a left
    gabi-algebra with respect to $\counit$ and $\delta(b) \coloneqq b\sweedler{1} \otimes S(b\sweedler{2})$ for all $b \in B$, i.e., it lifts the closed structure of $\hmodM[k]$ to its category of
    left modules (symmetrically for the left Hopf algebra case). It is easy to see that a one-sided convolution inverse
    of the identity is always unital and counital 
    (see, e.g., \cite[Remark 3.8]{Saracco-Frob}), so an anti-multiplicative and
    anti-comultiplicative one-sided antipode is 
		automatically an anti-bialgebra morphism.
		
    Examples of one-sided but not two-sided Hopf algebras exist in the literature: see \cite{Green-Nichols-Taft,Lauve-Taft,Nichols-Taft,Rodriguez-Taft}.
		Namely, \cite[Example 21]{Green-Nichols-Taft} exhibits an example of a genuine left Hopf algebra whose left antipode is an anti-bialgebra morphism, constructed as the free left Hopf algebra over a coalgebra. In \cite[\S3]{Nichols-Taft}, a similar construction is used to exhibit an example of a left Hopf algebra in which no left antipode can be an anti-bialgebra morphism. In \cite[\S3]{Rodriguez-Taft}, a new example of a left Hopf algebra is provided by modifying the construction of $SL_q(2)$; here as well the left antipode is neither and anti-algebra nor an anti-coalgebra morphism. The latter example is extended in \cite{Lauve-Taft} to provide a whole family of left Hopf algebras $\tilde{SL}_q(n)$ for all $n \geq 2$.
	\end{example}

Inspired by \cref{ex:hopf_algebras_are_gabi} and \cref{ex:onesidedHopf}, let us introduce the following definition.

  \begin{definition}\label{def:antipode}
    The \emph{antipode} of a gabi-algebra $A$ is the map $\sigma \colon A \to A$
    defined by $\sigma(a) = \counit(a_+) a_-$ for all $a \in A$.
  \end{definition}
	
	The antipode $\sigma$ is a composition of algebra morphisms, 
  \begin{align*}
    \sigma = A \xrightarrow{\delta} A \otimes A\op \xrightarrow{\counit \tensor \id} A\op
    \ ,
  \end{align*}
  and hence itself an algebra morphism.
  Furthermore, it is a morphism of augmented algebras since for all $a \in A$,
  \[(\counit \circ \sigma)(a) = \counit(a_+)\counit(a_-) = \counit(a).\]

  \begin{remark}
    One can now give trivial examples of an algebra $H$ such that the forgetful functor $\omega
    \colon \hmodM[H] \to \hmodM[k]$ is both strictly closed and strictly monoidal, but not
    closed monoidal.
    We simply need a bialgebra structure implementing a monoidal structure, and an
    unrelated Hopf algebra structure implementing the closed structure as in
    \Cref{ex:hopf_algebras_are_gabi}.
    For example, $H = k[x]$, with the bialgebra structure 
    \begin{align*}
      \Delta(x) = x \tensor x, \quad \counit(x) = 1,
    \end{align*}
    and the Hopf algebra structure
    \begin{equation*}
      \Delta(x) = 1 \tensor x + x \tensor 1, \quad \counit(x) = 0, \quad S(x) = -x.
    \end{equation*}
		A more significant class of examples is offered by one-sided Hopf algebras as in \cref{ex:onesidedHopf}. We will treat the latter case in more detail in \cref{ssec:rightHopf}.
  \end{remark}

	In view of what we mentioned at the beginning of \cref{ssec:EMcats}, for a gabi-algebra
  $A$ the lifted skew-closed structure on $\hmodM[A]$ is automatically right normal. The interested reader may now wonder what happens with left and associative normality. Since it will be easier to answer after discussing tricocycloids and tensor-hom adjunctions in \cref{ssec:tricocycloid} and \cref{ssec:tensorhom}, we postpone answering them until \cref{thm:assnorm} and \cref{prop:leftnorm}.

\subsection{Gabi-algebras and tricocycloids}\label{ssec:tricocycloid}
  Let us now show that a gabi-algebra $A$ canonically admits the structure of a so-called
  \emph{lax tricocycloid} in $\hmodM[k]$.
	This allows us also to discuss a few conditions under which a gabi-algebra structure on an algebra $A$ is coming from a Hopf algebra structure (see \cref{prop:gabi_algebra_tricocycloid_sufficient_hopf}).

  \begin{definition}[{\cite[\S3]{Street-tricocycloids}}]
    Let $(\cat,\otimes,\tensUnit)$ be a braided monoidal category with braiding $c$.
    A \emph{lax tricocycloid} in $\cat$ is an object $A$ together with a morphism $v
    \colon A \tensor A \to A \tensor A$ satisfying
    \begin{align}
      \label{eq:tricocycloid_equation}
      (v \tensor A) \circ (A \tensor c_{A, A}) \circ (v \tensor A)
      = (A \tensor v) \circ (v \tensor A) \circ (A \tensor v)
      \ .
    \end{align}
    The morphism $c_{A,A}^{-1} \circ v$ is a \emph{fusion operator} (see \cite[Proposition 1.1]{Street-tricocycloids}). When $v$ is invertible, we
    drop the adjective `lax'.
    A (lax) tricocycloid $(A, v)$ is \emph{augmented} if there are morphism $\eta \colon
    \tensUnit \to A$ and $\counit \colon A \to \tensUnit$ such that
    \begin{align}
      \label{eq:augmented_tricocycloid_equations}
      (A \tensor \counit) \circ v \circ (A \tensor \eta) = A, \hspace{6pt}
      (\counit \tensor A) \circ v = A \tensor \counit, \hspace{6pt}
      v \circ (\eta \tensor A) = A \tensor \eta, \hspace{6pt}
      \counit \circ \eta = \tensUnit\ .
    \end{align}
  \end{definition}

  The notion of tricocycloid encompasses both bialgebras and Hopf algebras in $\cat$.
  In fact, augmented lax tricocycloid structures $(v, \eta, \counit)$ on an object $A$
  are in one-to-one correspondence with bialgebra structures $(m, \Delta, \eta, \counit)$
  on $A$ with the inverse braiding, and this bialgebra is a Hopf algebra if and only if $v$ is
  invertible \cite[Proposition 2.3]{Lack-Street-skew-monoidales}.
  The tricocycloid equation \eqref{eq:tricocycloid_equation} encodes associativity,
  coassociativity, and the fact that $\Delta$ is a multiplicative map with respect to the
  inverse braiding.

  \begin{example}
    \label{ex:hopf_tricocycloid_1}
    Let $H$ be a Hopf algebra over $k$.
    Then $v(a \tensor b) = a\sweedler{2} b \tensor a\sweedler{1}$ and $v'(a \tensor b) =
    b\sweedler{1} \tensor S(b\sweedler{2}) a$ define lax tricocycloid structures.
    In fact, $v' = v\inv$, and $(A,v,1,\counit)$ is an augmented tricocycloid.
  \end{example}

  But we do not need a Hopf algebra or a bialgebra structure in order to get a tricocycloid.

  \begin{proposition}
    \label{prop:ga-bialgebras_are_tricocycloids}
    Let $A \in \hmodM[k]$ be a gabi-algebra and set $v(a \tensor b) \coloneqq b_+ \tensor b_- a$ for all $a,b \in A$.
    Then $(A, v)$ is a lax tricocycloid.
  \end{proposition}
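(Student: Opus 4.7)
The plan is to verify the tricocycloid equation \eqref{eq:tricocycloid_equation} by a direct element-wise computation on $a \otimes b \otimes c \in A \otimes A \otimes A$, exploiting the Sweedler-like notation $\delta(a) = a_+ \otimes a_-$ introduced in \Cref{prop:gabi_algebra_defining_properties}. Recall that the braiding $c_{A,A}$ in $\hmodM[k]$ is the usual flip $x \otimes y \mapsto y \otimes x$ and that $v(x \otimes y) = y_+ \otimes y_- x$.

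First I would compute the left-hand side, which is straightforward as it does not require \eqref{eq:gabi_algebra_associativity}: applying $v \otimes A$ produces $b_+ \otimes b_- a \otimes c$, flipping the last two tensorands yields $b_+ \otimes c \otimes b_- a$, and a second application of $v \otimes A$ gives
\[
c_+ \otimes c_- b_+ \otimes b_- a.
\]
For the right-hand side, I would begin with $a \otimes v(b \otimes c) = a \otimes c_+ \otimes c_- b$, then apply $v \otimes A$ to get $c_{++} \otimes c_{+-} a \otimes c_- b$, and finally apply $A \otimes v$ to reach $c_{++} \otimes (c_- b)_+ \otimes (c_- b)_- c_{+-} a$.

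The key step is to rewrite $(c_- b)_+ \otimes (c_- b)_-$ using that $\delta \colon A \to A \otimes A\op$ is an algebra map. Because multiplication in the right tensorand is reversed, we have $(xy)_+ \otimes (xy)_- = x_+ y_+ \otimes y_- x_-$, so in particular $(c_- b)_+ \otimes (c_- b)_- = c_{-+} b_+ \otimes b_- c_{--}$. This turns the right-hand side into $c_{++} \otimes c_{-+} b_+ \otimes b_- c_{--} c_{+-} a$. Now axiom \eqref{eq:gabi_algebra_associativity} applied to $c$ gives $c_{++} \otimes c_{-+} \otimes c_{--} c_{+-} = c_+ \otimes c_- \otimes 1$; substituting this (which is legitimate because the $k$-linear map $x \otimes y \otimes z \mapsto x \otimes y b_+ \otimes b_- z a$ depends only on the three indicated tensorands) collapses the right-hand side to $c_+ \otimes c_- b_+ \otimes b_- a$, matching the left-hand side.

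The only conceptual obstacle is making sure the indices of $\delta$ are manipulated with the correct $A\op$-convention when splitting $\delta(c_- b)$; everything else is bookkeeping. No invertibility of $v$ is required, so the conclusion is merely that $(A,v)$ is a \emph{lax} tricocycloid, in line with the statement.
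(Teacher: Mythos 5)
Your computation is correct and coincides with the paper's own proof: both sides of the tricocycloid equation are evaluated elementwise in exactly the same way, with the same two key ingredients, namely the multiplicativity of $\delta$ into $A \tensor A\op$ to expand $(c_- b)_+ \tensor (c_- b)_-$ and axiom \eqref{eq:gabi_algebra_associativity} to collapse the right-hand side. Nothing is missing.
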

  \begin{proof}
    We compute
    \begin{align*}
      ((v \tensor A) \circ (A \tensor \mathsf{tw}_{A, A}) \circ  (v \tensor A))(a \tensor b \tensor c)
      & \stackrel{\phantom{\eqref{eq:gabi_algebra_associativity}}}{=} ((v \tensor A) \circ (A \tensor \mathsf{tw}_{A, A}))(b_+ \tensor b_- a \tensor c) \\
      & \stackrel{\phantom{\eqref{eq:gabi_algebra_associativity}}}{=} (v \tensor A)(b_+ \tensor c \tensor b_- a) \\
      & \stackrel{\phantom{\eqref{eq:gabi_algebra_associativity}}}{=} c_+ \tensor c_- b_+ \tensor b_- a
      \ ,
    \end{align*}
    where $\mathsf{tw}$ is the usual twist, and on the other hand,
    \begin{align*}
      ((A \tensor v) \circ (v \tensor A) \circ (A \tensor v))(a \tensor b \tensor c)
      & \stackrel{\phantom{\eqref{eq:gabi_algebra_associativity}}}{=} ((A \tensor v) \circ (v \tensor A))(a \tensor c_+ \tensor c_- b) \\
      & \stackrel{\phantom{\eqref{eq:gabi_algebra_associativity}}}{=} (A \tensor v)(c_{++} \tensor c_{+-} a \tensor c_- b) \\
      & \stackrel{\phantom{\eqref{eq:gabi_algebra_associativity}}}{=} c_{++} \tensor (c_- b)_+ \tensor (c_- b)_- c_{+-} a \\
      & \stackrel{\phantom{\eqref{eq:gabi_algebra_associativity}}}{=} c_{++} \tensor c_{-+} b_+ \tensor b_- c_{--} c_{+-} a \\
      & \stackrel{\eqref{eq:gabi_algebra_associativity}}{=}
      c_{+} \tensor c_{-} b_+ \tensor b_- a \ .
    \end{align*}
    The expressions agree, and hence the claim follows. 
  \end{proof}

\subsection{Tensor-hom adjunction}\label{ssec:tensorhom}
  As it turns out, gabi-algebras allow for a `tensor-hom'-like adjunction on their
  categories of modules.

  Let $A$ be a gabi-algebra and $M$ be a left $A$-module.
  Consider the $k$-module $A \tensor M$, which we turn into an $A$-bimodule via
  \[a . (b \tensor m) . c = a b c_+ \tensor c_- m
    \ .
  \]
  We denote this bimodule by $A \odot M$.

  \begin{theorem}
    \label{prop:gabi_algebra_tensor_hom}
    For any $M \in \hmodM[A]$, there is an adjunction
    \begin{equation*}
      \begin{tikzcd}
        \hmodM[A] \ar[rr, bend left=20, "{(A \odot M) \tensor_A -}"]
        & \perp
        & \hmodM[A] \ar[ll, bend left=20, "{\hmodM[k](M, -)}"]
      \end{tikzcd}
      \ .
    \end{equation*}
    The unit and counit of the adjunction are
    \begin{align*}
      \coev^M_N \colon N \to \hmodM[k](M, (A \odot M) \tensor_A N),
      \quad n \mapsto (m \mapsto (1_A \odot m) \tensor_A n)
    \end{align*}
    and
    \begin{align*}
      \ev^M_N \colon (A \odot M) \tensor_A \hmodM[k](M, N) \to N,
      \quad (a \odot m) \tensor_A f \mapsto a f(m)
      \ .
    \end{align*}
  \end{theorem}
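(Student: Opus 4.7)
The plan is to establish the adjunction by exhibiting a natural bijection of hom-sets
\[
\Phi_{N,P} \colon \hmodM[A]\bigl((A \odot M) \tensor_A N,\, P\bigr) \;\xrightarrow{\,\sim\,}\; \hmodM[A]\bigl(N,\, \hmodM[k](M, P)\bigr)
\]
and then identifying its unit and counit with the stated $\coev^M_N$ and $\ev^M_N$; the triangle identities then come for free. A preliminary step is to verify that $A \odot M$ is a well-defined $A$-bimodule. Left-right compatibility is manifest (the left action affects only the first tensorand); associativity of the right action unfolds to $b\,c_+d_+ \odot d_-c_-\,m = b(cd)_+ \odot (cd)_-\,m$, which is exactly the content of $\delta$ being an algebra map $A \to A \tensor A\op$; and the unit axiom on the right follows from $\delta(1_A) = 1_A \tensor 1_A$.

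For the bijection itself, I would set $\Phi(\phi)(n)(m) = \phi\bigl((1_A \odot m) \tensor_A n\bigr)$. The only nontrivial check is that $\Phi(\phi)$ is $A$-linear into $\hmodM[k](M,P)$ equipped with its gabi-action $(a.f)(m) = a_+ f(a_- m)$. Using the right action on $A \odot M$ together with $A$-linearity of $\phi$, one computes
\[
\Phi(\phi)(an)(m) \;=\; \phi\bigl((1_A \odot m)\cdot a \tensor_A n\bigr) \;=\; \phi\bigl((a_+ \odot a_- m) \tensor_A n\bigr) \;=\; a_+\, \phi\bigl((1_A \odot a_- m) \tensor_A n\bigr) \;=\; \bigl(a.\Phi(\phi)(n)\bigr)(m).
\]
The inverse $\Psi$ is defined by $\Psi(\psi)\bigl((a \odot m) \tensor_A n\bigr) = a\, \psi(n)(m)$. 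Well-definedness over $\tensor_A$ is the dual computation
\[
\Psi(\psi)\bigl((a\odot m)\cdot b \tensor_A n\bigr) \;=\; ab_+\,\psi(n)(b_-m) \;=\; a\bigl(b.\psi(n)\bigr)(m) \;=\; a\,\psi(bn)(m) \;=\; \Psi(\psi)\bigl((a\odot m) \tensor_A bn\bigr),
\]
while left $A$-linearity of $\Psi(\psi)$ is immediate. That $\Phi$ and $\Psi$ are mutually inverse and natural in both variables is then a one-line verification: $\Phi(\Psi(\psi))(n)(m) = 1_A\,\psi(n)(m) = \psi(n)(m)$ and $\Psi(\Phi(\phi))((a\odot m)\tensor_A n) = a\,\phi((1_A\odot m)\tensor_A n) = \phi((a\odot m)\tensor_A n)$.

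Evaluating $\Phi$ at $\id_{(A\odot M) \tensor_A N}$ yields the unit $\coev^M_N(n)(m) = (1_A\odot m) \tensor_A n$, and evaluating $\Psi$ at $\id_{\hmodM[k](M, N)}$ yields the counit $\ev^M_N((a\odot m)\tensor_A f) = a f(m)$, matching the formulas in the statement. The main technical point, executed in the second paragraph, is the matching identity between the right action $(b\odot m)\cdot a = a_+ \odot a_- m$ on $A\odot M$ and the lifted gabi-action on $\hmodM[k](M, P)$; this is precisely what $\delta$ being an algebra map into $A \tensor A\op$ is designed to ensure. Notably, the augmentation $\counit$ and the axioms \eqref{eq:gabi_algebra_counitality}--\eqref{eq:gabi_algebra_associativity} are not needed for the adjunction per se — multiplicativity of $\delta$ suffices — which already indicates that the full gabi-algebra structure has a richer role to play than merely producing this tensor-hom adjunction.
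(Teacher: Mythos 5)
Your proof is correct, and it is precisely the ``routine check'' that the paper declares and then skips: the paper does not carry out the unit--counit (or hom-set) verification, but instead explains where the adjunction comes from by composing two known adjunctions, namely the free--forgetful adjunction $\hmodM[k](M,P)\cong\hmodM[A](A\tensor M,P)$ (through which the gabi-action on $\hmodM[k](M,P)$ is transported to produce the right $A$-action on $A\odot M$) and the standard tensor--hom adjunction for bimodules. Your route is more elementary and self-contained: you define the bijection $\Phi$ and its inverse $\Psi$ directly and verify by hand the two points that actually carry content --- that $\Phi(\phi)$ is $A$-linear for the lifted action $(a.f)(m)=a_+f(a_-m)$, and that $\Psi(\psi)$ is balanced over $A$ --- both of which reduce to the matching of the right action $(b\odot m)\cdot a = ba_+\odot a_-m$ against that lifted action. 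The paper's route buys conceptual clarity (the bimodule $A\odot M$ is not an ad hoc definition but the transport of structure along a canonical isomorphism), while yours makes the omitted verification explicit and, as a byproduct, isolates exactly which hypotheses are used. Your closing observation that only multiplicativity and unitality of $\delta\colon A\to A\tensor A\op$ are needed, and not the axioms \eqref{eq:gabi_algebra_counitality}--\eqref{eq:gabi_algebra_associativity}, reproduces the remark the paper itself makes immediately after the theorem.
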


  It is routine to check the unit-counit adjunction.
  Instead of giving the proof, we will briefly explain where the adjunction comes from.
  First of all, the free-forgetful adjunction for $\hmodM[A]$ gives an isomorphism
  $\hmodM[k](M, N) \cong \hmodM[A](A \tensor M, N)$ of $k$-modules, for any $k$-module
  $M$ and $A$-module $N$.
  Transporting the left $A$-module structure on $\hmodM[k](M, N)$ through this
  isomorphism, one arrives at the right module structure on $A \odot M$.
  Finally, one uses the standard tensor-hom adjunction for bimodules over rings, see
  \eqref{eq:tensor_hom_adjunction}.
  The chain of isomorphisms is thus
  \begin{align*}
    \hmodM[A](N, \hmodM[k](M, P))
    \cong \hmodM[A](N, \hmodM[A](A \odot M, P))
    \cong \hmodM[A]((A \odot M) \tensor_A N, P)
    \ .
  \end{align*}

  \begin{remark}
    \Cref{prop:gabi_algebra_tensor_hom} actually holds more generally:
    this adjunction exists whenever $A$ is an algebra admitting an algebra map $\delta
    \colon A \to A \tensor A\op$.
  \end{remark}

\subsection{Skew-monoidal structure}\label{ssec:boxtimes}
  Any skew-closed category in which the inner hom functor admits a left adjoint possesses a
  skew-monoidal structure (see \cref{sec:skew_monoidal_cats}).
  Here we give the skew-monoidal structure of the category of modules over a
  gabi-algebra, associated to the tensor-hom adjunction from
  \Cref{prop:gabi_algebra_tensor_hom}.

  Consider the bifunctor
  \begin{align*}
    \boxtimes \colon \cat \times \cat \to \cat,
    \quad
    M \boxtimes N = (A \odot N) \tensor_A M
    \ .
  \end{align*}

  \begin{proposition}
    \label{prop:skew-monoidal-structure-of-gabi}
    Let $A$ be a gabi-algebra.
    The $\boxtimes$ tensor product defined above provides a left skew-monoidal structure on $\hmodM[A]$ with unit $k$,
    unitors
    \begin{gather}
      \lambda_N \colon k \boxtimes N \to N,
      \quad (a \odot n) \otimes_A 1_k \mapsto a n , \label{eq:leftunitor} \\
      \rho_M \colon M \xrightarrow{\sim} M \boxtimes k,
      \quad m \mapsto (1_A \odot 1_k) \otimes_A m, \notag
    \end{gather}
    and associator
		\begin{equation}\label{eq:reasso}
    \begin{gathered}
      \alpha_{L, M, N} \colon (L \boxtimes M) \boxtimes N
      \to L \boxtimes (M \boxtimes N), \\
      (a \odot n) \otimes_A \big((b \odot m) \otimes_A l \big)
      \mapsto
      \Big(a b_+ \odot \big((1_A \odot b_- n) \otimes_A m\big)\Big) \otimes_A l
      \ .
    \end{gathered}
		\end{equation}
	Moreover, this monoidal structure is strong (i.e. the unitors and associator are isomorhpisms) if and only if $A$ is a normal gabi-algebra.	
  \end{proposition}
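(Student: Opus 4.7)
The strategy is to reduce everything to the duality in \Cref{prop:bijection_skew_closed_monoidal_structures}. Since $A$ is a gabi-algebra, \Cref{prop:gabi_algebra_defining_properties} endows $\hmodM[A]$ with a left skew-closed structure whose internal hom is $[M,N] = \hmodM[k](M,N)$ with $A$-action $(a.f)(m) = a_+ f(a_- m)$, and \Cref{prop:gabi_algebra_tensor_hom} provides a left adjoint $L_M = (A \odot M) \otimes_A -$ to each $[M,-]$, natural in $M$. Invoking \Cref{prop:bijection_skew_closed_monoidal_structures} then yields a left skew-monoidal structure on $\hmodM[A]$ with tensor product $M \boxtimes N \coloneqq L_N M = (A \odot N) \otimes_A M$ and unit $k$. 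The same theorem also delivers the final clause of the proposition: the skew-monoidal structure is left/right/associative normal if and only if the underlying skew-closed structure is, and the conjunction of all three normality conditions is, by definition, the content of $A$ being a normal gabi-algebra.

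It remains to identify the structural morphisms with the explicit formulas stated. I would plug the data of \Cref{ex:k-mod_is_closed} (for $i$, $j$, $\Gamma$) and of \Cref{prop:gabi_algebra_tensor_hom} (for $\coev$, $\ev$) into the canonical formulas of \Cref{rem:closed_monoidal_category}\ref{item:closed3}. For the unitors the computation is essentially immediate: since $j_N(1_k) = \id_N$, we find $\lambda_N\big((a \odot n) \otimes_A 1_k\big) = \ev^N_N\big((a \odot n) \otimes_A \id_N\big) = a n$; and $\rho_M(m) = i_{M \boxtimes k}\big(\coev^k_M(m)\big) = \coev^k_M(m)(1_k) = (1_A \odot 1_k) \otimes_A m$. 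For the associator, the cleanest route is to verify directly the universal characterization \eqref{eq:alphaunique}: writing out its right-hand side using that $\Gamma$ is post-composition and that the internal action is $(b.f)(n) = b_+ f(b_- n)$, one checks that the map \eqref{eq:reasso} is the unique $A$-linear morphism satisfying the resulting identity.

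The main obstacle lies in the associator: one has to verify both that \eqref{eq:reasso} is well-defined across the iterated relative tensor product and that it agrees with the morphism produced by the adjunction. Both tasks rely crucially on axiom \eqref{eq:gabi_algebra_associativity}, namely $a_{++} \otimes a_{-+} \otimes a_{--} a_{+-} = a_+ \otimes a_- \otimes 1$, which is exactly the content behind the tricocycloid identity of \Cref{prop:ga-bialgebras_are_tricocycloids}; the remaining manipulations are routine uses of \eqref{eq:gabi_algebra_counitality} and \eqref{eq:gabi_algebra_antipode_axioms} in Sweedler notation.
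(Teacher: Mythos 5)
Your proposal is correct and follows essentially the same route as the paper: invoke \Cref{prop:bijection_skew_closed_monoidal_structures} together with the adjunction of \Cref{prop:gabi_algebra_tensor_hom} to obtain the skew-monoidal structure and the normality statement, then identify the unitors and associator by unwinding the canonical formulas of \Cref{rem:closed_monoidal_category}\ref{item:closed3}. The only cosmetic difference is that the paper pins down the associator via the Yoneda lemma applied to the natural transformation \eqref{eq:nat_transformation-associator} (computing $(p\circ\id^\#)^{\flat\flat}$ explicitly), whereas you propose the equivalent characterization \eqref{eq:alphaunique}; both are sanctioned by the same remark and lead to \eqref{eq:reasso}.
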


  \begin{proof}
    The adjunction from \Cref{prop:gabi_algebra_tensor_hom} is given in terms of hom-sets
    as
    \begin{align*}
      \hmodM[A](X \boxtimes Y, V) \xrightarrow{\sim} \hmodM[A](X, \hmodM[k](Y, V)), 
      \quad
      f \mapsto \Big(f^\# \colon x \mapsto \big(y \mapsto f((1 \odot y) \tensor_A x)\big)\Big)
    \end{align*}
    with inverse
    \begin{align*}
      \hmodM[A](X, \hmodM[k](Y, V)) \xrightarrow{\sim} \hmodM[A](X \boxtimes Y, V) 
      \quad
      g \mapsto \big(g^\flat \colon (a \odot y) \tensor_A x \mapsto a g(x)(y)\big)
      \ ,
    \end{align*}
    where the action of $a$ is in $V$. We leave to the interested reader to make explicit each time to which adjunction $(-)^\#$ and $(-)^\flat$ refer.

    From the paragraph above \eqref{eq:unitors_from_adjunction}, we can now immediately describe the left unitor
    as
    \begin{align*}
      \lambda_X \colon k \boxtimes X = (A \odot X) \tensor_A k \to X, \quad
      \lambda_X \big((a \odot x) \tensor_A 1_k\big) 
      = j_X^\flat \big((a \odot x) \tensor_A 1_k\big)
      = a x,
    \end{align*}
    while unpacking \eqref{eq:unitors_from_adjunction} entails that the right unitor has to be
		\[\rho_X \colon X \to X \boxtimes k = (A \odot k) \tensor_A X, \qquad x \mapsto (1_A \odot 1_k) \tensor_A x.\]
		Notice that \eqref{eq:unitors_from_adjunction} is an isomorphism, since $\hmodM[k](k,X) \cong X$ as left $A$-modules, and $\rho_X = \id_{X \boxtimes k}^\#$. Furthermore, $\rho_X$ is invertible, as expected, with inverse 
    \begin{align*}
      \rho_X^{-1} \colon X \boxtimes k \to X, \quad
      \rho_X^{-1} \big((a \odot 1_k) \tensor_A x\big)
      = \id_X^\flat \big((a \odot 1_k) \tensor_A x\big)
      = a x
    \end{align*}
    (the invertibility of $\rho_X$ follows from \Cref{prop:bijection_skew_closed_monoidal_structures} and the fact that $\hmodM[A]$
    is right normal).

    Now, we compute the associator.
    First of all, the natural transformation from
    \eqref{eq:associativity_between_inner_hom_and_tensor} is given by $\left( p_{X, Y, Z}
    (h) \right) (x)(y) = h(1 \odot y \tensor_A x)$, for any linear map $h \colon X \otimes
    Y \to Z$.
    The natural transformation \eqref{eq:nat_transformation-associator} between hom 
    functors can be described as $f \mapsto (p \circ f^\#)^{\flat \flat}$, for an $A$-linear morphism
    $f\colon X \boxtimes (Y \boxtimes Z) \to V$.
    Explicitly, and by omitting a few $\boxtimes$,
    \begin{align*}
      (p \circ f^\#)^{\flat \flat}
      (a \odot z \tensor_A (b \odot y \tensor_A x))
      &= (p_{X, Y, X (YZ)} \circ f^\#)^{\flat \flat}
      (a \odot z \tensor_A (b \odot y \tensor_A x))
      \\ &= 
      a (p_{X, Y, X (YZ)} \circ f^\#)^{\flat}
      (b \odot y \tensor_A x) (z)
      \\ &\oversetEq[(\star)]
      a \left[ b . (p_{X, Y, X (YZ)} \circ f^\#) (x) (y) \right] (z)
      \\ &\oversetEq
      a b_+ (p_{X, Y, X (YZ)} \circ f^\#) (x) (y) (b_- z)
      \\ &\oversetEq
      a b_+ f^\# (x) (1 \odot b_- z \tensor_A y)
      \\ &\oversetEq
      a b_+ f (1 \odot (1 \odot b_- z \tensor_A y) \tensor_A x)
      \\ &\oversetEq
      f( a b_+ \odot (1 \odot b_- z \tensor_A y) \tensor_A x )
      \ ,
    \end{align*}
    where in the step marked $(\star)$ care has to be taken:
    the action of $b$ takes place in $\hmodM[k](Z, V)$.
    By using the Yoneda lemma, we finally find $\alpha_{X, Y, Z} = (p_{X, Y, X (YZ)} \circ
    \id_{X(YZ)}^\#)^{\flat \flat}$ and it is clear that this coincides with the associator
    in the statement.
    
    The last statement is a direct consequence of \cref{prop:bijection_skew_closed_monoidal_structures} and the definition of a normal gabi-algebra.
  \end{proof}

  Note also that
	\[
	k \boxtimes X = (A \odot X) \otimes_A k \cong (A \odot X) \otimes_A A/A^+ \cong (A \odot X)/(A \odot X)A^+,
	\] 
	where $A^+ \coloneqq \ker(\counit)$ is the augmentation ideal of $A$.

	\subsection{A family of non-trivial examples: one-sided Hopf algebras}\label{ssec:rightHopf}
		
		We already saw in \cref{ex:onesidedHopf} that right Hopf algebras with
    anti-mul\-ti\-plicative and anti-comultiplicative one-sided antipode are examples of gabi-algebras, whence their categories of modules are left skew-closed.
    It is interesting to remark explicitly that these provide an example
    of left skew-closed categories which are neither left normal nor associative normal.
		Let $(B,\Delta,\counit)$ be a $k$-bialgebra admitting an anti-multiplicative and anti-comultiplicative right antipode $S$ which is not a left antipode. Then $B$ is a left gabi-algebra with respect to $\counit$ and $\delta(b) \coloneqq b\sweedler{1} \otimes S(b\sweedler{2})$ for all $b \in B$, whence the category $\hmodM[B]$ is a left skew-closed category with respect to $[M,N] \coloneqq \hmodM[k](M,N)$ with action $(b \cdot f)(m) \coloneqq b\sweedler{1}f\big(S(b\sweedler{2})m\big)$ for all $b \in B$, $m \in M$, $f \in \hmodM[k](M,N)$ and for all $M,N$ in $\hmodM[B]$. Now, consider the $k$-linear map
		\begin{equation}\label{eq:betacan}
		\beta \colon B \otimes B \to B \otimes B, \qquad a \otimes b \mapsto a\sweedler{1} \otimes S(a\sweedler{2})b,
		\end{equation}
		and endow the domain with the diagonal $B$-module structure $a \cdot (b \otimes c) \coloneqq a\sweedler{1}b \otimes a\sweedler{2}c$ and the codomain with the left regular $B$-module structure $a \cdot (b \otimes c) \coloneqq ab \otimes c$ for all $a,b,c\in B$. It is clear that $\beta$ is not $B$-linear, otherwise the relation
		\[a\sweedler{1} \otimes S(a\sweedler{2})a\sweedler{3} = \beta(a \cdot (1 \otimes 1)) = a\cdot \beta(1 \otimes 1) = a \otimes 1\]
		would entail that $S$ is also a left antipode, contradicting our choice. Nevertheless,
		\begin{align*}
		a\sweedler{1}\cdot \beta\left(S\left(a\sweedler{2}\right)\cdot (b \otimes c)\right) & = a\sweedler{1}\cdot \beta\left(S\left(a\sweedler{3}\right)b \otimes S\left(a\sweedler{2}\right)c\right) \\
		& = a\sweedler{1}\cdot \left(S\left(a\sweedler{3}\right)\sweedler{1}b\sweedler{1} \otimes S\left(S\left(a\sweedler{3}\right)\sweedler{2}b\sweedler{2}\right)S\left(a\sweedler{2}\right)c\right) \\
		& = a\sweedler{1}S\left(a\sweedler{4}\right)b\sweedler{1} \otimes S\left(a\sweedler{2}S\left(a\sweedler{3}\right)b\sweedler{2}\right)c = \counit(a)\beta(b \otimes c)
		\end{align*}
		implies that the assignment $1_k \mapsto \beta$ is an element in $\hmodM[B]\left(k,[B \otimes B, B \otimes B]\right)$ which does not come from an element in $\hmodM[B](B \otimes B, B\otimes B)$ via the morphism in \ref{item:N1}. Therefore, the skew-closed structure is not left normal. It is not associative normal either, or \cref{thm:assnorm} would imply that $\beta$ is invertible, which is not the case.

		As a final observation remember that, since $\hmodM[B]$ is not left normal as a skew-closed category, \cref{prop:bijection_skew_closed_monoidal_structures} entails that the unitor $\lambda_M$ from \eqref{eq:assunitors} cannot be a natural isomorphism. It is interesting to see explicitly why this is the case in our concrete example.
		To this aim, notice that even if $\beta$ from \eqref{eq:betacan} is not $B$-linear, it induces a left $B$-linear map
		\[
		\hat\beta\colon \frac{B \odot ({.B} \otimes {.B})}{(B \odot ({.B} \otimes {.B}))B^+} \to \frac{B \odot ({.B} \otimes B)}{(B \odot ({.B} \otimes B))B^+}, \qquad \overline{a \otimes (b\otimes c)} \mapsto \overline{a \otimes (b\sweedler{1} \otimes S(b\sweedler{2})c)},
		\]
		because for all $x \in B^+$ we have
		\begin{gather*}
		\overline{ax\sweedler{1} \otimes (S(x\sweedler{3})\sweedler{1}b\sweedler{1} \otimes S\big(S(x\sweedler{3})\sweedler{2}b\sweedler{2}\big)S(x\sweedler{2})c)} = \overline{ax\sweedler{1} \otimes (S(x\sweedler{2})b\sweedler{1} \otimes S(b\sweedler{2})c)} \\
		= \varepsilon(x) \overline{a \otimes (b\sweedler{1} \otimes S(b\sweedler{2})c)} = 0.
		\end{gather*}
		If, in this framework, 
		\[\lambda_{{.B} \otimes {.B}} \colon \frac{B \odot ({.B} \otimes {.B})}{(B \odot ({.B} \otimes {.B}))B^+} \to {.B} \otimes {.B}, \qquad \overline{a \otimes (b\otimes c)} \mapsto a\sweedler{1}b \otimes a\sweedler{2}c,\]
		is invertible, with inverse necessarily given by
		\[\lambda_{{.B} \otimes {.B}}^{-1} \colon {.B} \otimes {.B} \to \frac{B \odot ({.B} \otimes {.B})}{(B \odot ({.B} \otimes {.B}))B^+}, \qquad a \otimes b \mapsto \overline{a\sweedler{1} \otimes (1 \otimes S(a\sweedler{2})b)},\]
		then we have the equality
		\[\overline{a \otimes (1 \otimes 1)} = \overline{a\sweedler{1} \otimes (1 \otimes S(a\sweedler{2})a\sweedler{3})}\]
		in $B \odot ({.B} \otimes {.B})/(B \odot ({.B} \otimes {.B}))B^+$. By applying $\hat\beta$, the latter is an equality in $B \odot ({.B} \otimes {B})/(B \odot ({.B} \otimes {B}))B^+$ as well, but now we may apply $\counit \otimes \counit \otimes \id$ to it (it is well-defined) to conclude that $\counit(a) = S(a\sweedler{1})a\sweedler{2}$. A contradiction.
 
		Modules over one-sided Hopf algebras also offer an example of a monoidal and left skew-closed category whose structures are not compatible, that is, which does not form a left skew-closed skew-monoidal category in the sense of \cref{def:left_skew_closed_skew_monoidal}. Indeed, if $B$ is as above, then $(\hmodM[B],\otimes,k)$ is a monoidal category and $(\hmodM[B],\hmodM[k](-,-),k)$ is a left skew-closed category, but the functor $- \otimes M$ is not left adjoint to the functor $\hmodM[k](M,-)$: while the unit 
		\[\coev^{M}_N \colon N \to \hmodM[k](M, N \otimes M), \qquad n \mapsto (m \mapsto n \otimes m)\]
		is left $B$-linear, the unit
		\[\ev^{M}_N \colon \hmodM[k](M, N) \otimes M \to N,\qquad f \otimes m \mapsto f(m)\]
    is not, unless $S$ is a two-sided antipode.

	\section{When are gabi-algebras Hopf algebras?}\label{sec:gabi-Hopf}
	
	We saw (cf.\ \cref{ex:hopf_algebras_are_gabi}) that Hopf algebras are examples of gabi-algebras, but also that not any gabi-algebra is Hopf (see \cref{ex:onesidedHopf}). This section is devoted to answer the question when the gabi and Hopf notions coincide.

\subsection{Finite-dimensional double gabi-algebras are Hopf}\label{ssec:GabiHopf}
	
	  We first establish an easy criterion for tricocycloids to give Hopf algebras.

  \begin{lemma}
    \label{prop:inverse_lax_tricocycloid}
    Let $(A, v)$ be a lax tricocycloid in a symmetric monoidal category.
    \begin{enumerate}[label=(\arabic*),ref=(\arabic*),leftmargin=0.8cm]
      \item\label{item:tri1}
        If $v$ is invertible, then $(A, v\inv)$ is a lax tricocycloid as well.

      \item\label{item:tri2}
        Assume that there are morphisms $\eta \colon \tensUnit \to A$ and $\counit \colon
        A \to \tensUnit$ such that
        \begin{align*}
          \counit \tensor A = (A \tensor \counit) \circ v, \quad
          \eta \tensor A = v \circ (A \tensor \eta) , \quad
          \counit \circ \eta = 1
          \ .
        \end{align*}
        If $v$ is invertible and $(A \tensor \counit) \circ v\inv \circ (A \tensor \eta) = A$
        holds, then $(A, v\inv, \eta, \counit)$ is an augmented tricocycloid, and thus
        equips $A$ with the structure of a Hopf algebra.
    \end{enumerate}
  \end{lemma}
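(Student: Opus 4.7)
My plan is to treat the two parts of the lemma in turn, exploiting the fact that the tricocycloid equation becomes self-dual under inversion precisely when the braiding is symmetric.

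For part \ref{item:tri1}, I would simply invert both sides of the tricocycloid equation for $v$. Since inversion reverses the order of composition, the left-hand side of \eqref{eq:tricocycloid_equation} becomes
\[
(v^{-1} \tensor A) \circ (A \tensor c_{A,A}^{-1}) \circ (v^{-1} \tensor A),
\]
and the right-hand side becomes
\[
(A \tensor v^{-1}) \circ (v^{-1} \tensor A) \circ (A \tensor v^{-1}).
\]
Here is where the symmetry hypothesis is essential: $c_{A,A}^{-1} = c_{A,A}$, so the first expression is exactly the left-hand side of \eqref{eq:tricocycloid_equation} with $v$ replaced by $v^{-1}$. Thus $(A, v^{-1})$ is a lax tricocycloid. (In a merely braided setting, one would instead obtain the tricocycloid equation with the inverse braiding, and the statement would fail.)

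For part \ref{item:tri2}, I need to verify the four axioms \eqref{eq:augmented_tricocycloid_equations} for the data $(A, v^{-1}, \eta, \counit)$. The first axiom is exactly the hypothesis $(A \tensor \counit) \circ v^{-1} \circ (A \tensor \eta) = A$, and the fourth is $\counit \circ \eta = 1$, which holds by assumption. The second and third axioms follow directly by pre- (resp.\ post-) composing the given identities $\counit \tensor A = (A \tensor \counit) \circ v$ and $\eta \tensor A = v \circ (A \tensor \eta)$ with $v^{-1}$. Combined with part \ref{item:tri1}, this shows that $(A, v^{-1}, \eta, \counit)$ is an augmented tricocycloid in the sense of the definition.

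The final conclusion is obtained by invoking \cite[Proposition 2.3]{Lack-Street-skew-monoidales} (quoted after the definition of tricocycloid): augmented lax tricocycloids correspond to bialgebras with respect to the inverse braiding, and such a bialgebra is a Hopf algebra precisely when the tricocycloid morphism is invertible. Since $v^{-1}$ is invertible (with inverse $v$), the resulting bialgebra structure is automatically Hopf. There is no real obstacle here — the content is entirely in the observation that symmetry of the braiding makes the tricocycloid equation stable under inversion.
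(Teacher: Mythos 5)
Your proof is correct and follows essentially the same route as the paper: part (1) is the same inversion argument using $c_{A,A}^{-1}=c_{A,A}$ (which the paper leaves as "a simple computation"), and part (2) verifies the four augmentation axioms for $v^{-1}$ exactly as the paper does, concluding via the Lack--Street correspondence. No gaps.
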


  \begin{proof}
    \ref{item:tri1} follows directly from a simple computation, using that the braiding
    is its own inverse.
    \ref{item:tri2} is easy as well:
    if $v$ is invertible and satisfies the given equations, then $v\inv$ satisfies
    \begin{align*}
      (\counit \tensor A) \circ v\inv = A \tensor \counit, \quad
      v\inv \circ (\eta \tensor A) = A \tensor \eta
      \ .
    \end{align*}
    Together with $\counit \circ \eta = 1$ and the assumption $(A \tensor \counit) \circ
    v\inv \circ (A \tensor \eta) = A$, these are exactly the defining equations
    \eqref{eq:augmented_tricocycloid_equations} making the tricocycloid $(A, v\inv)$
    augmented.
  \end{proof}

  \begin{example}
    \label{ex:hopf_tricocycloid_2}
    Continuing \Cref{ex:hopf_tricocycloid_1}, $(A, v', 1, \counit)$ satisfies the
    assumptions of the second point of \Cref{prop:inverse_lax_tricocycloid}.
  \end{example}
	
	Define now for a gabi-algebra $A$ the \emph{canonical map} 
	\begin{equation}\label{eq:can}
	\beta\colon A \otimes A \to A \otimes A, \qquad a \tensor b \mapsto a_+ \tensor a_- b\ .
	\end{equation}
  If $\beta$ is invertible, we set $\Delta(a) = \beta\inv(a \tensor 1)$.
  We can now show that gabi-algebras with invertible canonical map are quite close to
  Hopf algebras. This will also be helpful in proving \cref{prop:leftnorm}.

  \begin{theorem}
    \label{prop:gabi_algebra_tricocycloid_sufficient_hopf}
    Let $A$ be a gabi-algebra.
    If $\beta$ is invertible and $\Delta$ is left counital, i.e.\ $(\counit \tensor A)
    \circ \Delta = A$, then $(A, \Delta, \counit)$ is a coalgebra.
    In fact, $A$ is a Hopf algebra whose antipode is the antipode $\sigma$ of $A$.
  \end{theorem}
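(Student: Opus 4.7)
The plan is to apply \Cref{prop:inverse_lax_tricocycloid}\ref{item:tri2} to the lax tricocycloid $(A,v)$ supplied by \Cref{prop:ga-bialgebras_are_tricocycloids}, namely $v(a \tensor b) = b_+ \tensor b_- a$, with augmentation data $\eta = 1_A$ and the given $\counit$. The crucial elementary observation that makes this work is that $\beta = v \circ \mathsf{tw}$, where $\mathsf{tw}$ denotes the usual twist; hence $\beta$ is invertible if and only if $v$ is, and in that case $v\inv = \mathsf{tw} \circ \beta\inv$. In particular, $v\inv(a \tensor 1) = \mathsf{tw}(\Delta(a))$.

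First I would verify the augmentation equations for $(v, 1_A, \counit)$. The identity $(A \tensor \counit) \circ v = \counit \tensor A$ follows at once from \eqref{eq:gabi_algebra_counitality} and the multiplicativity of $\counit$, since $(A \tensor \counit)(v(a \tensor b)) = b_+ \counit(b_-) \counit(a) = b \counit(a)$. The identity $v \circ (A \tensor \eta) = \eta \tensor A$ follows from $\delta(1_A) = 1_A \tensor 1_A$ (as $\delta$ is an algebra map), whence $v(a \tensor 1) = 1 \tensor a$. Unitality $\counit(1_A) = 1$ is part of $\counit$ being an algebra map. Finally, the extra invertibility condition $(A \tensor \counit) \circ v\inv \circ (A \tensor \eta) = A$ becomes, via $v\inv(a \tensor 1) = \mathsf{tw}(\Delta(a))$, exactly the standing hypothesis $(\counit \tensor A) \circ \Delta = A$.

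With these in hand, \Cref{prop:inverse_lax_tricocycloid}\ref{item:tri2} supplies an augmented tricocycloid $(A, v\inv, 1_A, \counit)$, which by the correspondence recalled just above \Cref{ex:hopf_tricocycloid_1} endows $A$ with a Hopf algebra structure whose underlying algebra is the original one; in particular $(A, \Delta, \counit)$ is a coalgebra. Unpacking that correspondence, the comultiplication read off from $v\inv$ is
\[\Delta(a) = \mathsf{tw}(v\inv(a \tensor 1)) = \beta\inv(a \tensor 1),\]
exactly the $\Delta$ in the statement; and the resulting antipode $S$ must satisfy $\delta(a) = a\sweedler{1} \tensor S(a\sweedler{2})$, as in \Cref{ex:hopf_algebras_are_gabi}. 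Applying $\counit \tensor A$ to this last identity gives
\[\sigma(a) = \counit(a_+) a_- = \counit(a\sweedler{1}) S(a\sweedler{2}) = S(a),\]
so the antipode of the resulting Hopf algebra is precisely $\sigma$. Coassociativity of $\Delta$ and the antipode equations come for free from the (augmented) tricocycloid axioms.

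The main obstacle I anticipate is this last step, namely carefully tracing Street's correspondence to confirm that the Hopf-algebra data furnished by the lemma really coincide with the $\Delta$ and $\sigma$ in the statement; some vigilance about conventions for twists and opposite structures is needed because one passes from $v$ to $v\inv$. All the other computations are short and routine.
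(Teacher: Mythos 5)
Your proposal is correct and follows essentially the same route as the paper's own proof: both verify the augmentation identities for the tricocycloid $(A, v=\beta\circ\mathsf{tw})$ of \Cref{prop:ga-bialgebras_are_tricocycloids}, observe that the extra condition in \Cref{prop:inverse_lax_tricocycloid}\ref{item:tri2} is exactly the left-counitality hypothesis, and then identify the resulting antipode with $\sigma$ (the paper via the formula $S=(\counit\otimes A)\circ v\circ(\eta\otimes A)$, you via the equivalent relation $\delta(a)=a\sweedler{1}\otimes S(a\sweedler{2})$).
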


  \begin{proof}
    Let $\mathsf{tw}$ be the flip map in $\hmodM[k]$, i.e.\ the canonical symmetric braiding.
    By \Cref{prop:ga-bialgebras_are_tricocycloids}, $(A, v = \beta \circ \mathsf{tw})$ is a lax
    tricocycloid.
    Clearly, $v$ is invertible if and only if $\beta$ is.
    We also have $\counit \circ \eta = 1$, and one computes
    \begin{align*}
      ((A \tensor \counit) \circ v)(a  \tensor  b)
      = b_+ \counit(b_- a)
      = (\counit  \tensor  A)(a  \tensor  b)
      \ ,
    \end{align*}
    and
    \begin{align*}
      (v \circ (A \tensor \eta))(a)
      = 1_+  \tensor  1_- a
      = 1  \tensor  1 a
      = (\eta  \tensor  A)(a)
      \ .
    \end{align*}
    Finally, we have
    \begin{align*}
      (A  \tensor  \counit) \circ v\inv \circ (A  \tensor  \eta) = A
      \iff
      (\counit  \tensor  A) \circ \beta\inv \circ (A  \tensor  \eta) = A
      \ ,
    \end{align*}
    and thus all the assumptions for the second point of
    \Cref{prop:inverse_lax_tricocycloid} are satisfied.
    This shows that we indeed get a Hopf algebra.
    By the general theory of augmented lax tricocycloids with invertible structure map,
    the antipode is given by
    \begin{align*}
      S(a) 
      &= ((\counit \tensor A) \circ v \circ (\eta \tensor A))(a)
      = ((\counit \tensor A) \circ \beta \circ (A \tensor \eta))(a)
      \\
      &= ((\counit \tensor A) \circ \beta)(a \tensor 1)
      = (\counit \tensor A)(a_+ \tensor a_- 1)
      = \counit(a_+) a_-
      = \sigma(a)
      \ ,
    \end{align*}
    as claimed.
  \end{proof}
	
The previous result allows us to reduce a first class of gabi-algebras to Hopf algebras.	
	
	\begin{corollary}\label{fdinvanti}
    Let $k$ be a field and $A$ be a finite-dimensional gabi-algebra over
    $k$ with invertible antipode $\sigma$.
    Define $\Delta(a) = a_+ \tensor \sigma\inv (a_-)$.
    Then $(A, m, 1, \Delta, \counit, \sigma)$ is a Hopf algebra.
  \end{corollary}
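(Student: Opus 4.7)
The plan is to reduce the corollary to \cref{prop:gabi_algebra_tricocycloid_sufficient_hopf}: it suffices to show that the canonical map $\beta$ of \eqref{eq:can} is invertible and that $\beta\inv(a \otimes 1)$ coincides with the $\Delta$ displayed in the statement. Because $k$ is a field and $A$ is finite-dimensional, $A \otimes A$ is a finite-dimensional $k$-vector space, so any left inverse of $\beta$ on $A \otimes A$ will automatically be a two-sided inverse; thus we only need to exhibit a left inverse.

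The candidate is $\tilde\beta \colon A \otimes A \to A \otimes A$, $a \otimes b \mapsto a_+ \otimes \sigma\inv(a_-) b$. Checking $\tilde\beta \circ \beta = \id$ reduces to proving
\[
a_{++} \otimes \sigma\inv(a_{+-}) \, a_- = a \otimes 1,
\]
which I would derive in two moves. First, apply $\id \otimes \counit \otimes \id$ to \eqref{eq:gabi_algebra_associativity}: the right-hand side collapses to $\counit(a_-) a_+ \otimes 1 = a \otimes 1$ by \eqref{eq:gabi_algebra_counitality}, while on the left the middle factor becomes $\counit(a_{-+}) a_{--} = \sigma(a_-)$ directly from \cref{def:antipode}, yielding
\[
a_{++} \otimes \sigma(a_-) \, a_{+-} = a \otimes 1.
\]
Second, since $\sigma \colon A \to A\op$ is an algebra morphism, both $\sigma$ and its inverse are anti-multiplicative as self-maps of $A$; applying $\id \otimes \sigma\inv$ to the previous identity and expanding $\sigma\inv(\sigma(a_-) a_{+-}) = \sigma\inv(a_{+-}) \, a_-$ delivers the required identity.

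Finite-dimensionality then promotes $\tilde\beta \circ \beta = \id$ to $\beta\inv = \tilde\beta$, so $\beta\inv(a \otimes 1) = a_+ \otimes \sigma\inv(a_-) = \Delta(a)$ matches the statement, and left counitality is immediate from $k$-linearity of $\sigma\inv$ together with \cref{def:antipode}: $(\counit \otimes \id)\Delta(a) = \counit(a_+) \sigma\inv(a_-) = \sigma\inv(\counit(a_+) a_-) = \sigma\inv(\sigma(a)) = a$. Invoking \cref{prop:gabi_algebra_tricocycloid_sufficient_hopf} then concludes the proof, with $\sigma$ identified as the antipode of the resulting Hopf algebra. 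The only genuinely non-trivial point is the invertibility of $\beta$: finite-dimensionality is what allows us to bypass the more delicate verification of $\beta \circ \tilde\beta = \id$ directly from \eqref{eq:gabi_algebra_associativity}, while the invertibility of $\sigma$ is exactly what turns the GA3-plus-counit identity into one that exhibits $\tilde\beta$ as a left inverse of $\beta$.
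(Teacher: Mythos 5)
Your proof is correct and follows essentially the same route as the paper's: apply $\id\otimes\counit\otimes\id$ to \eqref{eq:gabi_algebra_associativity} to get $a_{++}\otimes\sigma(a_-)a_{+-}=a\otimes 1$, use anti-multiplicativity of $\sigma\inv$ to turn this into a left inverse of $\beta$, invoke finite-dimensionality for two-sided invertibility, verify left counitality, and conclude via \cref{prop:gabi_algebra_tricocycloid_sufficient_hopf}. The only cosmetic difference is that you name the candidate inverse $\tilde\beta$ explicitly, whereas the paper exhibits the composite map directly.
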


  \begin{proof}
    In $A$, we have the identity
    \begin{align*}
      a \tensor 1 
      ~~\oversetEq[{\eqref{eq:gabi_algebra_associativity}}]~~ 
      a_{++} \tensor \counit(a_{-+}) a_{--} a_{+-}
      = a_{++} \tensor \sigma(a_-) a_{+-}
      \ ,
    \end{align*}
    which upon applying $\sigma\inv$ to the second leg becomes
    $
      a \tensor 1 = a_{++} \tensor \sigma\inv(a_{+-}) a_-
      .
    $
    Thus the composition
    \begin{align*}
      a \tensor b 
      \overset{\beta}{\mapsto}
      a_+ \tensor a_- b
      \mapsto
      a_{++} \tensor \sigma\inv(a_{+-}) a_- b
    \end{align*}
    is the identity, i.e.\ the canonical map $\beta$ has a left inverse.
    By finite-dimensionality of $A$, $\beta$ is invertible.
    Note that $\Delta(a) = \beta\inv(a \tensor 1)$.
    If $\Delta$ is left counital, then by
    \Cref{prop:gabi_algebra_tricocycloid_sufficient_hopf} we are done.
    So we compute
    \begin{align*}
      ((\counit \tensor A) \circ \Delta)(a)
      = \counit(a_+) \sigma\inv(a_-)
      = \sigma\inv( \sigma(a) )
      = a
      \ ,
    \end{align*}
    exactly as needed.
  \end{proof}

    Our next aim is to state a sufficient condition for the antipode of a gabi-algebra to be
    invertible.
    First, note that for any \emph{closed monoidal} category $\cat$ with left and right
    internal homs $[-,-]^l$ and $[-,-]^r$, respectively, one has natural isomorphisms
    \begin{align*}
      \cat(X, [Y, Z]^r) \cong \cat(X \tensor Y, Z) \cong \cat(Y, [X, Z]^l)
      \ .
    \end{align*}
    One can now generalize this to the closed (non-monoidal) setting by simply
    demanding two closed structures with the same unit on the category such that an
    isomorphism between the two outer hom-sets hold.
    Let us apply this to the gabi-algebra setting.
    
    \begin{lemma}
    Let $(A,\counit)$ be an augmented algebra endowed with two gabi-algebra structures denoted by $\delta(a) = a_+ \tensor a_-$ and $\delta'(a)= a_{+'} \tensor a_{-'}$. Denote the associated closed structures on $\hmodM[A]$ by $[-,-]$ and $[-,-]'$ respectively.
    Then, the following are equivalent:
    \begin{enumerate}[label=(\arabic*),ref=(\arabic*),leftmargin=0.8cm]
      \item\label{item:5.5.1}
        The linear map
        \begin{align*}
          \phi \colon \hmodM[A](X, [Y, Z]) \to \hmodM[A](Y, [X, Z]')
          , \quad \phi(f)(y)(x) = f(x)(y)
        \end{align*}
        and its obvious inverse 
				\[\phi\inv\colon \hmodM[A](Y, [X, Z]') \to \hmodM[A](X, [Y, Z]), \quad \phi\inv(g)(x)(y) = g(y)(x)\] 
				are well-defined, entailing that $\hmodM[A](X, [Y, Z]) \cong \hmodM[A](Y, [X, Z]')$.

      \item\label{item:5.5.2}
        $a_{+'} a_{-'+} \tensor a_{-'-} = 1 \tensor a$ and $a_{+} a_{-+'} \tensor a_{--'} = 1 \tensor a$ hold for all $a \in A$.
    \end{enumerate}
    If one of these equivalent conditions holds, then we call $A$ a {\em double} gabi-algebra.
    \end{lemma}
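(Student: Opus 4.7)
The plan is to reduce the statement to manipulating concrete identities in $A \otimes A$ by first unpacking what $A$-linearity of $\phi(f)$ really means, then running a Yoneda-style specialization to free modules in the ``only if'' direction while merely exploiting the relations (2) tensor-linearly in the ``if'' direction.

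First I would translate the well-definedness of $\phi$ into an elementwise condition. For $f \in \hmodM[A](X,[Y,Z])$, on the one hand $\phi(f)(ay)(x)=f(x)(ay)$; on the other hand, combining the action on $[X,Z]'$ with the $A$-linearity of $f$ and the action on $[Y,Z]$ recalled in \Cref{prop:gabi_algebra_defining_properties},
\begin{equation*}
(a\cdot \phi(f)(y))(x) \;=\; a_{+'}\,f(a_{-'}x)(y) \;=\; a_{+'}\,a_{-'+}\,f(x)(a_{-'-}y).
\end{equation*}
Thus $A$-linearity of $\phi(f)$ is equivalent to the pointwise identity $f(x)(ay) = a_{+'} a_{-'+}\, f(x)(a_{-'-}y)$ holding in $Z$ for all $x,y,a$. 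The analogous unwinding for $\phi\inv$, obtained by exchanging the roles of the primed and unprimed structures, yields $g(y)(ax) = a_{+}\,a_{-+'}\,g(y)(a_{--'}x)$ for every $A$-linear $g\colon Y \to [X,Z]'$.

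For the implication (2) $\To$ (1), my plan is to apply to both sides of the hypothesis $1\otimes a = a_{+'}a_{-'+}\otimes a_{-'-}$ the $k$-linear map $A\otimes A \to Z$, $b\otimes c \mapsto b\cdot f(x)(c\cdot y)$, which is well defined precisely because $f(x)$ is merely $k$-linear and the action on $Z$ is $k$-linear. The output is exactly the pointwise identity above, which establishes the well-definedness of $\phi$; the second relation in (2) handles $\phi\inv$ by the same trick.

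For the converse (1) $\To$ (2), I would specialize the pointwise identity to $X=A$ with the regular action. By the free--forgetful adjunction, $A$-linear maps $A \to [Y,Z]$ correspond bijectively to $k$-linear maps $h\colon Y\to Z$ via $f(a)(y) = a_{+}h(a_{-}y)$ (and $h=f(1)$), so evaluating at $x=1$ produces $h(ay) = a_{+'}a_{-'+}h(a_{-'-}y)$ for every $Y,Z\in \hmodM[A]$ and every $k$-linear $h$. Choosing $Y=A$ with the regular action, $Z=A\otimes A$ with the regular action on the first tensorand, $h(c)=1\otimes c$, and finally $y=1$, this specializes exactly to $1\otimes a = a_{+'}a_{-'+}\otimes a_{-'-}$. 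A completely symmetric argument for $\phi\inv$, with the two comultiplications swapped, yields the second identity in (2). I do not foresee any deep obstacle: the whole proof is an unwinding of actions plus a standard Yoneda-type specialization, and the only real care needed is notational, keeping the primed and unprimed structures straight and recording on which tensorand each action of $A$ is placed.
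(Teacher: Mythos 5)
Your proposal is correct and follows essentially the same route as the paper's proof: the same unwinding of the two module actions to the pointwise identity $f(x)(ay) = a_{+'}a_{-'+}f(x)(a_{-'-}y)$, and the same specialization to $X=Y={}_AA$, $Z={}_AA\otimes A$ with the map determined by $f(1)(a)=1\otimes a$ for the converse. The only cosmetic difference is that you make explicit (via the map $b\otimes c\mapsto b\cdot f(x)(c\cdot y)$) why the tensor identity implies the pointwise one, which the paper dispatches with ``clearly''.
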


    \begin{proof}
		We first show that $\phi$ is well defined if and only if the left-hand side relation in \ref{item:5.5.2} holds. For all $f$ in $\hmodM[A](X, [Y, Z])$, $\phi(f)$ is $A$-linear if and only if the left-most term in
		\begin{align*}
		\big(a.\phi(f)(y)\big)(x) & = a_{+'}\phi(f)(y)(a_{-'}x) = a_{+'}f(a_{-'}x)(y) = a_{+'}\big(a_{-'}.f(x)\big)(y) \\
		& = a_{+'}a_{-'+}f(x)(a_{-'-}y)
		\end{align*}
equals
\[\phi(f)(ay)(x) = f(x)(ay)\]
for all $x \in X$, $y \in Y$, $a \in A$. Clearly, if $a_{+'} a_{-'+} \tensor a_{-'-} = 1 \tensor a$ holds for all $a \in A$, then this is the case. Conversely, assume that $\phi$ in \ref{item:5.5.1} is well-defined. Let $X = Y = {}_AA$ and $Z = {}_A A \tensor A$ and consider $f \in \hmodM[A](A, [A, A \ot A])$ uniquely determined by
    $f(1)(a) \coloneqq 1 \tensor a$ for all $a \in A$.
    Then we have
    \begin{align*}
      1 \tensor a & = f(1)(a) = \phi(f)(a)(1) = \big(a.\phi(f)(1)\big)(1) = a_{+'} \phi(f)(1)(a_{-'}) 
      = a_{+'} f(a_{-'})(1) \\
			& = a_{+'} \big(a_{-'}.f(1)\big)(1) = a_{+'}a_{-'+} f(1)(a_{-'-}) = a_{+'}a_{-'+} \ot a_{-'-}
    \end{align*}
    for all $a \in A$, as desired. A straightforward adaptation of this argument shows that $\phi\inv$ is well defined if and only if the right-hand side relation in \ref{item:5.5.2} holds. To conclude, it is clear that if $\phi$ and $\phi\inv$ are well-defined, then they are each other inverses.
  \end{proof}

    \begin{lemma}\label{doubleinvantipode}
    Let $A$ be a double gabi-algebra, then the antipode $\sigma$ is invertible.
\end{lemma}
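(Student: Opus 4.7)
The plan is to use the second gabi-algebra structure $\delta'$ to produce an explicit inverse for $\sigma$. Concretely, define $\sigma'\colon A\to A$ by $\sigma'(a) \coloneqq \counit(a_{+'}) a_{-'}$; this is the antipode associated with $\delta'$ in the sense of \Cref{def:antipode}, and, being the composition $(\counit \otimes \id)\circ \delta'$ of the two algebra maps $\counit$ and $\delta'$, it is automatically $k$-linear (in fact an algebra map). I claim that $\sigma$ and $\sigma'$ are mutually inverse.

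To verify $\sigma \circ \sigma' = \id_A$, start from the first identity in \ref{item:5.5.2},
\[
a_{+'}\, a_{-'+} \tensor a_{-'-} = 1 \tensor a
\qquad (a \in A),
\]
and apply $\counit \otimes \id_A$ to both sides. Using that $\counit$ is an algebra morphism,
\[
\counit(a_{+'})\,\counit(a_{-'+})\, a_{-'-} = a.
\]
The middle factor and the rightmost tensorand together form $\counit(a_{-'+})\, a_{-'-} = \sigma(a_{-'})$, so by $k$-linearity of $\sigma$ we obtain
\[
\sigma(\sigma'(a)) \;=\; \sigma\bigl(\counit(a_{+'})\, a_{-'}\bigr) \;=\; \counit(a_{+'})\,\sigma(a_{-'}) \;=\; a.
\]
For the reverse composition $\sigma' \circ \sigma = \id_A$, apply the completely analogous argument to the second identity $a_+\, a_{-+'} \tensor a_{--'} = 1 \tensor a$: applying $\counit \otimes \id_A$ and using multiplicativity of $\counit$ yields $\counit(a_+)\,\sigma'(a_-) = a$, which by linearity of $\sigma'$ reads $\sigma'(\sigma(a)) = a$.

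Thus $\sigma$ admits $\sigma'$ as a two-sided inverse, proving invertibility. There is essentially no obstacle here: the proof is a direct manipulation of the two identities characterising a double gabi-algebra, and the only ingredient beyond bookkeeping is that $\counit$ is multiplicative, which allows us to split $\counit(xy)$ into $\counit(x)\counit(y)$ and recognise $\sigma$ (respectively $\sigma'$) inside the resulting expression.
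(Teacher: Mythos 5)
Your proof is correct and follows essentially the same route as the paper: the paper also takes $\sigma'$ to be the antipode of the second structure and verifies $\sigma(\sigma'(a)) = \counit(a_{+'}a_{-'+})a_{-'-} = a$ and $\sigma'(\sigma(a)) = \counit(a_{+}a_{-+'})a_{--'} = a$ directly from the two defining identities of a double gabi-algebra. Your write-up merely spells out the intermediate steps (multiplicativity of $\counit$ and linearity of $\sigma$, $\sigma'$) that the paper leaves implicit.
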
    
    
    \begin{proof}
    In this case, one has
    \begin{align*}
      \sigma(\sigma'(a)) = \counit(a_{+'} a_{-'+}) a_{-'-} = a
      \qandq
      \sigma'(\sigma(a)) = \counit(a_{+} a_{-+'}) a_{--'} = a
      \ ,
    \end{align*}
    so that the antipode is invertible.
\end{proof}    

Combining \cref{doubleinvantipode} with \cref{fdinvanti} we immediately obtain the following.
    
	\begin{corollary}
	A finite-dimensional double gabi-algebra is a Hopf algebra.
	\end{corollary}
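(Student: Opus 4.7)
The plan is to combine the two results that immediately precede this corollary, namely \cref{doubleinvantipode} and \cref{fdinvanti}. First, I would apply \cref{doubleinvantipode} to the double gabi-algebra $A$: it produces an explicit two-sided inverse of the antipode $\sigma$, namely the antipode $\sigma'$ associated to the second gabi-algebra structure $\delta'$, through the identities $\sigma \circ \sigma' = \id_A = \sigma' \circ \sigma$ read off from condition (2) of the definition of a double gabi-algebra by applying $\counit$ to the left tensor factor.

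Having established that $\sigma$ is invertible, I would then feed $A$ -- equipped with its underlying augmented algebra and the gabi-algebra structure $\delta$ -- into \cref{fdinvanti}. Since $A$ is finite-dimensional over the field $k$ and $\sigma$ is now known to be invertible, both hypotheses of that corollary are met, and we obtain a Hopf algebra structure on $A$ with comultiplication $\Delta(a) = a_+ \otimes \sigma\inv(a_-)$, counit $\counit$, and antipode $\sigma$.

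I do not anticipate any substantive obstacle: the statement is a direct logical composition of the two preceding results, with no new calculation required beyond what those proofs already perform. The only mild observation worth flagging is that a double gabi-algebra carries two compatible gabi-algebra structures while \cref{fdinvanti} only consumes one of them; the role of the second structure is precisely to supply the invertibility of $\sigma$ via \cref{doubleinvantipode}, after which the first structure alone suffices to extract the Hopf algebra. A symmetric argument using $\delta'$ in place of $\delta$ would, of course, yield the same Hopf algebra structure, with the two comultiplications $\Delta$ and $\Delta'$ related by the exchange $\sigma \leftrightarrow \sigma'$.
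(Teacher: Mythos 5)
Your proposal is correct and follows exactly the paper's argument: the paper also deduces the corollary by combining \cref{doubleinvantipode} (invertibility of $\sigma$ from the double structure) with \cref{fdinvanti} (finite-dimensional gabi-algebra with invertible antipode is Hopf). No discrepancies to report.
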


	\subsection{Commutative gabi-algebras are Hopf algebras}

  Throughout this section, $A$ shall always be a gabi-algebra over the commutative ring
  $k$.
  
  \begin{proposition}
    \label{prop:commutative_gabi_are_hopf}
    If the gabi-algebra $A$ is commutative, then it is a Hopf algebra with
    comultiplication
    $
      \Delta(a) := a\sweedler{1} \tensor a\sweedler{2} := a_+ \tensor \sigma(a_-)
    $
    and antipode $\sigma$.
  \end{proposition}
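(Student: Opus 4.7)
The plan is to apply \Cref{prop:gabi_algebra_tricocycloid_sufficient_hopf}, which requires checking two things: the canonical map $\beta$ of \eqref{eq:can} is bijective, and the comultiplication $\Delta(a)=\beta\inv(a\tensor 1_A)$ so produced is left counital. A preliminary identity, $\sigma^2=\id_A$ in a commutative gabi-algebra, will be the key observation, and it will also pin down $\Delta(a)$ explicitly as $a_+\tensor\sigma(a_-)$.

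To prove $\sigma^2=\id_A$, I will apply the morphism ``multiply first and third tensor legs'' (well-defined because $A$ is commutative) to \eqref{eq:gabi_algebra_associativity} and use \eqref{eq:gabi_algebra_antipode_axioms} to rewrite $a_{++}a_{+-}=\counit(a_+)1_A$, obtaining
\[
\counit(a_+)\, a_{--}\tensor a_{-+} \;=\; a_+\tensor a_-.
\]
Applying $\id\tensor\counit$ to this identity, the right-hand side collapses to $a$ by \eqref{eq:gabi_algebra_counitality}, while the left-hand side equals $\counit(a_+)\sigma(a_-)=\sigma\bigl(\counit(a_+)a_-\bigr)=\sigma^2(a)$ by the defining formula $\sigma(x)=\counit(x_+)x_-$ and $k$-linearity of $\sigma$.

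Next, I set $\gamma(a\tensor b)\coloneqq a_+\tensor \sigma(a_-)\, b$. For $\beta\circ\gamma=\id$ it suffices to show $a_{++}\tensor a_{+-}\sigma(a_-) = a\tensor 1_A$: expanding $\sigma(a_-)=\counit(a_{-+})a_{--}$ and using commutativity to rewrite $a_{+-}a_{--}=a_{--}a_{+-}$, the claim becomes $\counit(a_{-+})\, a_{++} \tensor a_{--}a_{+-} = a \tensor 1_A$, which is precisely \eqref{eq:gabi_algebra_associativity} after applying $\counit$ to its middle leg and then using \eqref{eq:gabi_algebra_counitality}. For $\gamma\circ\beta=\id$ I need $a_{++}\tensor \sigma(a_{+-})a_- = a\tensor 1_A$; using that $\sigma$ is an algebra endomorphism of the commutative algebra $A$ and $\sigma^2=\id$, one rewrites $\sigma(a_{+-})a_- = \sigma\bigl(a_{+-}\sigma(a_-)\bigr)$, and applying $\id\tensor\sigma$ reduces the claim to the identity just established.

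Hence $\beta$ is invertible with inverse $\gamma$, so $\Delta(a)=\beta\inv(a\tensor 1_A)=a_+\tensor\sigma(a_-)$, and left counitality $(\counit\tensor\id)\Delta(a)=\counit(a_+)\sigma(a_-)=\sigma^2(a)=a$ is immediate from the first step. \Cref{prop:gabi_algebra_tricocycloid_sufficient_hopf} then upgrades $A$ to a Hopf algebra with antipode $\sigma$, as asserted. The main obstacle is the bookkeeping in the proof of $\gamma\circ\beta=\id$, since it requires simultaneously juggling commutativity, the involutivity $\sigma^2=\id$, and the fact that in the commutative case $\sigma$ is an algebra endomorphism of $A$ (not merely $A\to A\op$); once $\sigma^2=\id$ is in hand, however, everything else is a direct consequence of the three gabi-axioms.
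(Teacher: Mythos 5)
Your proof is correct and follows essentially the same route as the paper: establish $\sigma^2=\id_A$ from \eqref{eq:gabi_algebra_associativity}, \eqref{eq:gabi_algebra_antipode_axioms} and commutativity, verify that $a\tensor b\mapsto a_+\tensor\sigma(a_-)b$ is a two-sided inverse of $\beta$ (with the second composite reduced to the first via $\id\tensor\sigma$ and anti-multiplicativity of $\sigma$), check left counitality, and invoke \Cref{prop:gabi_algebra_tricocycloid_sufficient_hopf}. The only differences are cosmetic (you isolate $\sigma^2=\id$ as a preliminary step, whereas the paper interleaves it), and all your computations check out.
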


  \begin{proof}
	Since $A$ is commutative, on the one hand we have that
	\[a_{++} \otimes a_{+-}\sigma(a_-) = a_{++} \otimes \counit(a_{-+})a_{+-}a_{--} = a_{++}\counit(a_{-+}) \otimes a_{--}a_{+-} \stackrel{\eqref{eq:gabi_algebra_associativity}}{=} a \otimes 1,\]
	and on the other the computation
	\[\sigma^2(a) = \counit(a_+)\sigma(a_-) \stackrel{\eqref{eq:gabi_algebra_antipode_axioms}}{=} a_{++}a_{+-}\counit(a_{-+})a_{--} = a_{++}\counit(a_{-+})a_{--}a_{+-} \stackrel{\eqref{eq:gabi_algebra_associativity}}{=} a\]
	entails that also
	\[a_{++} \otimes \sigma(a_{+-})a_- = a_{++} \otimes \sigma\big(\sigma(a_-)a_{+-}\big) = a \otimes 1\]
	for all $a \in A$, whence the canonical map $\beta$ from \eqref{eq:can} is invertible with inverse $\beta^{-1}(a \otimes b) \coloneqq  a_+ \tensor \sigma(a_-)b$. If we set $\Delta(a) = a_+ \otimes \sigma(a_-)$, then it is left counital because 
	\[(\counit \tensor A)(\Delta(a)) = \counit(a_+)\sigma(a_-) = \sigma^2(a) = a\]
	and so the statement follows from \cref{prop:gabi_algebra_tricocycloid_sufficient_hopf}.
  \end{proof}

  \begin{corollary}
    A commutative algebra lifts the closed structure of $\hmodM[k]$ if and only if it
    lifts the closed monoidal structure.
  \end{corollary}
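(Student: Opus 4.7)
The plan is to show both implications of the biconditional and observe that one is immediate, while the other reduces to the preceding \Cref{prop:commutative_gabi_are_hopf}.

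For the ``if'' direction, there is essentially nothing to prove: if $A$ lifts the full closed monoidal structure of $\hmodM[k]$ to $\hmodM[A]$, then in particular it lifts the closed structure. (In Tannakian terms, this is the classical observation recalled in the introduction that lifting the closed monoidal structure along $\omega\colon\hmodM[A]\to\hmodM[k]$ corresponds to $A$ carrying a Hopf algebra structure, which in particular yields a gabi-algebra structure by \Cref{ex:hopf_algebras_are_gabi}, and hence lifts the closed structure by \Cref{prop:gabi_algebra_defining_properties}.)

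For the ``only if'' direction, assume $A$ is commutative and lifts the closed structure of $\hmodM[k]$ to $\hmodM[A]$. By \Cref{prop:gabi_algebra_defining_properties}, $A$ is a (left) gabi-algebra with some augmentation $\counit$ and comultiplication-like map $\delta(a)=a_+\otimes a_-$. Since $A$ is commutative, \Cref{prop:commutative_gabi_are_hopf} applies and equips $A$ with a Hopf algebra structure $(A, m, 1, \Delta, \counit, \sigma)$, where $\Delta(a)=a_+\otimes\sigma(a_-)$ and the antipode coincides with the gabi-antipode $\sigma$. As recalled in the introduction, the category of modules over a Hopf algebra is closed monoidal and this closed monoidal structure is lifted from $\hmodM[k]$ along $\omega$, so $A$ lifts the closed monoidal structure.

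There is no real obstacle: the content of the corollary is entirely absorbed by \Cref{prop:commutative_gabi_are_hopf}, which is the nontrivial step. The corollary just packages that proposition together with the well-known Tannakian correspondence between Hopf algebra structures and liftings of the closed monoidal structure. The only point requiring a minor verification is that the Hopf algebra structure produced by \Cref{prop:commutative_gabi_are_hopf} induces \emph{the same} closed structure on $\hmodM[A]$ as the original gabi-algebra structure, which is clear since both assign to $[M,N]=\hmodM[k](M,N)$ the action $(a\cdot f)(m)=a_+f(a_-m)$ described in \Cref{prop:gabi_algebra_defining_properties}.
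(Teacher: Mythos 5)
Your proof is correct and follows exactly the route the paper intends: the corollary is stated without proof precisely because it is the immediate combination of \Cref{prop:commutative_gabi_are_hopf} with \Cref{ex:hopf_algebras_are_gabi} and the classical Tannakian fact that Hopf algebra structures correspond to liftings of the closed monoidal structure. Your closing remark that the two closed structures agree is a worthwhile extra check (it uses $\sigma^2=\id$, established in the proof of \Cref{prop:commutative_gabi_are_hopf}, so that the gabi-structure induced by $\Delta(a)=a_+\otimes\sigma(a_-)$ via \Cref{ex:hopf_algebras_are_gabi} is again $a_+\otimes a_-$).
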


	\subsection{Gabi-algebras and normality}
	
	We are now in the position of establishing which additional properties on $A$ correspond to associative and left normality of the skew-closed structure, as promised in \cref{ssec:defandex}.
	
	\begin{proposition}\label{thm:assnorm}
	Let $A$ be a gabi-algebra over $k$. Then the skew-closed structure on $\hmodM[A]$ is associative normal if and only if the canonical map $\beta$ from \eqref{eq:can} is invertible.
	\end{proposition}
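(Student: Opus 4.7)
By \cref{prop:bijection_skew_closed_monoidal_structures}, the lifted skew-closed structure on $\hmodM[A]$ is associative normal precisely when the natural transformation $p_{X,Y,Z}$ of \eqref{eq:associativity_between_inner_hom_and_tensor} is invertible. I would first unravel $p$ in our setting: arguing as in the proof of \cref{prop:skew-monoidal-structure-of-gabi} and composing with the standard tensor-hom adjunction $\hmodM[k](X,\hmodM[k](Y,Z)) \cong \hmodM[k](X \otimes Y, Z)$, the map $p_{X,Y,Z}$ identifies with precomposition by the $k$-linear map
\[
\psi_{X,Y} \colon X \otimes Y \to X \boxtimes Y, \qquad x \otimes y \mapsto (1_A \odot y) \otimes_A x.
\]
The plan is therefore to establish the double equivalence: $p$ is a natural isomorphism $\iff$ $\psi_{X,Y}$ is a $k$-linear isomorphism for every $X,Y \in \hmodM[A]$ $\iff$ $\beta$ is invertible.

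For the forward direction, I would specialise to $X = Y = A$ and $Z = A \otimes A$, the latter endowed with the regular left $A$-action on the first tensorand. The standard right-$A$-linear isomorphism $A \boxtimes A = (A \odot A) \otimes_A A \xrightarrow{\sim} A \odot A$, $(b \odot y) \otimes_A a \mapsto b a_+ \otimes a_- y$, identifies $\psi_{A,A}$ with the canonical map $\beta$ of \eqref{eq:can}. Consequently, $p_{A,A,A \otimes A}$ becomes the endomorphism $h \mapsto h \circ \beta$ of $\hmodM[k](A \otimes A, A \otimes A)$. Its surjectivity on $\id_{A \otimes A}$ produces $g$ with $g \circ \beta = \id$, while its injectivity applied to $\beta \circ g - \id$ (which is killed by right-composition with $\beta$) forces $\beta \circ g = \id$. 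Hence $\beta$ is invertible.

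For the converse, assume $\beta$ is invertible and set $\Delta(a) = a\sweedler{1} \otimes a\sweedler{2} \coloneqq \beta\inv(a \otimes 1)$; a short check then shows $\beta\inv(a \otimes b) = a\sweedler{1} \otimes a\sweedler{2} b$. For each $Y \in \hmodM[A]$, I would then introduce the $k$-linear map
\[
\iota_Y \colon Y \otimes A \to A \odot Y, \qquad y \otimes a \mapsto a_+ \otimes a_- y,
\]
and verify (i) that it is right $A$-linear with respect to the standard right action on the source and the twisted bimodule structure on the target recalled in \cref{ssec:tensorhom}, which reduces to $\delta \colon A \to A \otimes A\op$ being an algebra map, and (ii) that $b \otimes y \mapsto b\sweedler{2} y \otimes b\sweedler{1}$ is a two-sided inverse, using the defining identities $\beta \circ \Delta = \id_A \otimes 1_A$ and $\Delta \circ \beta = \id_{A \otimes A}$. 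Tensoring $\iota_Y\inv$ over $A$ with $X$ and composing with the flip $Y \otimes X \cong X \otimes Y$ then produces a $k$-linear two-sided inverse to $\psi_{X,Y}$, completing the proof.

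The main obstacle is the bookkeeping around $\iota_Y$: one has to keep distinct the standard right $A$-action on $Y \otimes A$ from the twisted action $(b \otimes y) \cdot c = bc_+ \otimes c_- y$ on $A \odot Y$, and to combine the algebra-map property of $\delta$ with the gabi-algebra axioms \eqref{eq:gabi_algebra_counitality}--\eqref{eq:gabi_algebra_associativity} and the defining equations for $\Delta$ in order to read off the inverse formula cleanly; the descent from $\iota_Y$ to $\psi_{X,Y}$ through $\otimes_A X$ and the canonical flip is then straightforward.
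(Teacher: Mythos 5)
Your proof is correct, but it travels a genuinely different road from the one in the paper. The paper invokes \cref{prop:bijection_skew_closed_monoidal_structures} on the \emph{monoidal} side and works directly with the associator $\alpha_{L,M,N}$ of \eqref{eq:reasso}: for one implication it writes down an explicit inverse of $\alpha$ in terms of $\beta\inv$, and for the other it specialises to $\alpha_{A,A,A}$, repackages it as a map $(A\otimes A)\otimes A \to A\otimes(A\otimes A)$, and extracts a two-sided inverse of $\beta$ from ${\alpha'}\inv(1\otimes x\otimes 1)$. You instead stay on the \emph{hom} side, using the criterion that $p$ from \eqref{eq:associativity_between_inner_hom_and_tensor} be a natural isomorphism, and you localise the whole question in the single comparison map $\psi_{X,Y}\colon X\otimes Y \to X\boxtimes Y$, which you further reduce to the right $A$-module isomorphism $\iota_Y\colon Y\otimes A \to A\odot Y$ and hence to $\beta$ itself. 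Your forward direction replaces the paper's manipulation of ${\alpha'}\inv$ by a clean surjectivity/injectivity argument on $-\circ\beta$, and your backward direction inverts one two-variable map rather than a three-variable associator; both arguments check out (I verified that $\kappa_Y(b\otimes y)=b\sweedler{2}y\otimes b\sweedler{1}$ is indeed a two-sided inverse of $\iota_Y$ using the identities $\smash{{a\sweedler{1}}_+\otimes {a\sweedler{1}}_-a\sweedler{2}=a\otimes 1={a_+}\sweedler{1}\otimes{a_+}\sweedler{2}a_-}$, which are exactly the paper's \eqref{eq:antipode}). What your route buys is conceptual clarity — associative normality becomes the statement that the lifted tensor $\boxtimes$ agrees with the underlying $\otimes$ via the canonical comparison — at the cost of routing through the adjunction identifications; the paper's route keeps everything inside the skew-monoidal structure and yields the explicit inverse associator as a by-product. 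Both proofs ultimately rest on the same algebraic kernel, namely $\beta\inv(a\otimes b)=a\sweedler{1}\otimes a\sweedler{2}b$ and the unit/multiplicativity of $\delta$.
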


	\begin{proof}
	Recall from \cref{prop:bijection_skew_closed_monoidal_structures} that the closed structure on $\hmodM[A]$ is associative normal if and only if every component of the associative constraint from \eqref{eq:reasso}, that is
	\[
    \begin{gathered}
      \alpha_{L, M, N} \colon (L \boxtimes M) \boxtimes N
      \to L \boxtimes (M \boxtimes N), \\
      (a \odot n) \otimes_A \big((b \odot m) \otimes_A l \big)
      \mapsto
      \big(a b_+ \odot \big((1_A \odot b_- n) \otimes_A m\big)\big) \otimes_A l
      \ ,
    \end{gathered}
		\]
		is an isomorphism. Therefore, we are going to show that $\alpha_{L,M,N}$ is an isomorphism for every $L,M,N$ in $\hmodM[A]$ if and only if $\beta$ admits an inverse, that we write $\beta^{-1}(a \otimes b) = a\sweedler{1} \otimes a\sweedler{2}b$ for all $a,b \in A$ by a slight abuse of notation that will be soon justified (see \cref{prop:leftnorm}). 
		In such a case,
		\begin{equation}\label{eq:antipode}
		\begin{gathered}
		{a\sweedler{1}}_+ \otimes {a\sweedler{1}}_-{a\sweedler{2}} = a \otimes 1 = {a_+}\sweedler{1} \otimes {a_+}\sweedler{2}a_- \\
		\text{and} \qquad a_+ \otimes {a_-}\sweedler{1} \otimes {a_-}\sweedler{2} = a_{++} \otimes a_- \otimes a_{+-}
		\end{gathered}
		\end{equation}
		for all $a \in A$ (the latter follows by applying $\id \otimes \beta$ to both sides and comparing the results). 
		Observe that since $\beta(a \otimes 1) = \delta(a) = a_+ \otimes a_-$ is unital and multiplicative, we have that
		\begin{align*}
		\beta(a\sweedler{1}b\sweedler{1} \otimes a\sweedler{2}b\sweedler{2}) & = (a\sweedler{1}b\sweedler{1})_+ \otimes (a\sweedler{1}b\sweedler{1})_-a\sweedler{2}b\sweedler{2} = {a\sweedler{1}}_+{b\sweedler{1}}_+ \otimes {b\sweedler{1}}_-{a\sweedler{1}}_-a\sweedler{2}b\sweedler{2} \\
		& = a{b\sweedler{1}}_+ \otimes {b\sweedler{1}}_-b\sweedler{2} = ab \otimes 1 = \beta\beta^{-1}(ab \otimes 1),
		\end{align*}
		whence $A \to A \otimes A, a \mapsto \beta^{-1}(a \otimes 1)$ is a morphism of algebras.
		
		To begin with, let us slightly simplify \eqref{eq:reasso}. Note that
		\[(L \boxtimes M) \boxtimes N = (A \odot N) \otimes_A (L \boxtimes M) = (A \odot N) \otimes_A \big((A \odot M) \otimes_A L\big) \cong  \big((A \odot N) \odot M\big) \otimes_A L\]
		via $(a \odot n) \otimes_A \big((b \odot m) \otimes_A l \big)
      \mapsto
      \big((a b_+ \odot b_- n) \odot m\big) \otimes_A l$ in one direction, for all $a,b \in A$, $m \in M$, $n \in N$, $l \in L$, and via $\big((a  \odot n) \odot m\big) \otimes_A l \mapsto (a \odot n) \otimes_A \big((1_A \odot m) \otimes_A l \big)$ in the other. Up to the latter isomorphism,
			\[
    \begin{aligned}
      \alpha_{L, M, N} \colon \big((A \odot N) \odot M\big) \otimes_A L
      & \to \big( A \odot \big((A \odot N) \otimes_A M\big)\big) \otimes_A L, \\
      \big((a \odot n) \odot m\big) \otimes_A l
      & \mapsto
      \big(a \odot \big((1_A \odot n) \otimes_A m\big)\big) \otimes_A l
      \ ,
    \end{aligned}
		\]
		
		Suppose now that $\beta$ is invertible and consider
		\[
    \begin{aligned}
       \big( A \odot \big((A \odot N) \otimes_A M\big)\big) \otimes_A L
      & \to \big((A \odot N) \odot M\big) \otimes_A L, \\
      \big(a \odot \big((b \odot n) \otimes_A m\big)\big) \otimes_A l
      & \mapsto
      \big((a \odot b\sweedler{2}n) \odot b\sweedler{1}m\big) \otimes_A l
      \ ,
    \end{aligned}
		\]
		which is well-defined because of \eqref{eq:antipode}. By a direct check, it turns out to be a two-sided inverse of $\alpha_{L,M,N}$, so one implication is proved.
		
		For the other implication, suppose that $\alpha$ is a natural isomorphism and consider the isomorphism
		\[
    \begin{aligned}
      (A \odot A) \odot A \cong \big((A \odot A) \odot A\big) \otimes_A A
      & \xrightarrow{\alpha_{A, A, A}} \big( A \odot \big((A \odot A) \otimes_A A\big)\big) \otimes_A A \cong A \odot (A \otimes A), \\
      (a \odot b) \odot c
      & \mapsto
      a \odot (c_+ \otimes c_-b)
      \ ,
    \end{aligned}
		\]
		where $A \otimes A$ is a left $A$-module with regular left $A$-action on the left-most tensorand. If we look at it as a $k$-linear morphism
		\[\alpha' \colon (A \otimes A) \otimes A \to A \otimes (A \otimes A), \qquad (a \otimes b) \otimes c \mapsto a \otimes (c_+ \otimes c_-b),\]
		its inverse is uniquely determined by 
		\[(x'\otimes x'') \otimes x''' \coloneqq {\alpha'}^{-1}\big(1 \otimes (x \otimes 1)\big)\]
		for all $x \in A$. The latter satisfies
		\[
		1 \otimes x \otimes 1 = x'\otimes {x'''}_+ \otimes {x'''}_-x'' \qquad \text{and} \qquad 1 \otimes 1 \otimes y = {y_+}'\otimes {y_+}''y_- \otimes {y_+}'''
		\]
		for all $x,y \in A$. In particular,
		\[x \otimes 1 = {x'''}_+ \otimes {x'''}_-\varepsilon(x')x''\]
		entails that $a \otimes b \mapsto a''' \otimes \varepsilon(a')a''b$ is a section of $\beta$, while 
		\[y \otimes 1 = {y_+}''' \otimes \varepsilon\big({y_+}'\big){y_+}''y_-\]
		entails that it is a retraction of $\beta$ as well, hence it is its inverse.
	\end{proof}
	
	\begin{proposition}\label{prop:leftnorm}
	Let $A$ be a gabi-algebra whose canonical morphism $\beta$ is invertible. Then the closed structure on $\hmodM[A]$ is left normal, too, if and only if $A$ is a Hopf algebra with comultiplication $\beta^{-1}(a \otimes 1)$, counit $\counit$ and antipode $\sigma(a) = \counit(a_+)a_-$ for all $a \in A$.
	\end{proposition}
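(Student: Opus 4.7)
The plan is to deduce both implications from the tricocycloid machinery of \cref{prop:gabi_algebra_tricocycloid_sufficient_hopf} together with the standard fact that the module category of a Hopf algebra is closed monoidal. First, I would observe that the ``if'' direction is essentially a repackaging of the closed monoidal structure on modules over a Hopf algebra, whereas the ``only if'' direction reduces via \cref{prop:gabi_algebra_tricocycloid_sufficient_hopf} to verifying left counitality of $\Delta := \beta^{-1}(-\otimes 1)$, which, remarkably, will follow directly from the gabi axioms.

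For the ``if'' direction, I would assume that $A$ carries the claimed Hopf algebra structure with $\Delta(a) = \beta^{-1}(a \otimes 1)$, counit $\counit$, and antipode $\sigma$. A short verification shows that the Hopf-induced gabi structure from \cref{ex:hopf_algebras_are_gabi}, namely $a \mapsto a\sweedler{1} \otimes \sigma(a\sweedler{2})$, coincides with the original $\delta$: applying $\beta^{-1}$ to both, one checks that $\beta^{-1}(a\sweedler{1} \otimes \sigma(a\sweedler{2})) = a \otimes 1 = \beta^{-1}(\delta(a))$ using the Hopf antipode axiom and coassociativity, so injectivity of $\beta^{-1}$ forces equality. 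Consequently, the lifted skew-closed structure on $\hmodM[A]$ coincides with the right closed side of the classical closed monoidal structure on the modules of $A$ as a Hopf algebra. By the normality-preservation clause of \cref{prop:bijection_skew_closed_monoidal_structures}, a closed monoidal category is normal-closed, hence in particular left normal.

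For the ``only if'' direction, I would invoke \cref{prop:gabi_algebra_tricocycloid_sufficient_hopf}: since $\beta$ is already invertible, it suffices to verify that $\Delta(a) := \beta^{-1}(a \otimes 1)$ is left counital. The key observation is that \eqref{eq:gabi_algebra_antipode_axioms} together with associativity of the multiplication on $A$ yield $\mu_A \circ \beta = \counit \otimes \id_A$, since $\mu_A(\beta(x \otimes y)) = x_+ x_- y = \counit(x) y$. Applying $\mu_A$ to the defining identity $\beta(\Delta(a)) = a \otimes 1$ will then give $(\counit \otimes \id_A)(\Delta(a)) = \mu_A(a \otimes 1) = a$, which is precisely left counitality. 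Then \cref{prop:gabi_algebra_tricocycloid_sufficient_hopf} produces the Hopf algebra structure with the claimed data.

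The main conceptual subtlety, which the plan above exposes, is that left normality is not actively needed in the forward derivation of Hopfness: invertibility of $\beta$ together with \eqref{eq:gabi_algebra_antipode_axioms} alone already force $\Delta = \beta^{-1}(-\otimes 1)$ to be left counital, and hence $A$ to be Hopf. Left normality therefore enters the proposition as a consequence of Hopfness via the backward implication. Combined with \cref{thm:assnorm}, this means that normal-closedness of $\hmodM[A]$, invertibility of the canonical map $\beta$, and the existence of a compatible Hopf algebra structure on $A$ are three equivalent characterizations for a gabi-algebra $A$.
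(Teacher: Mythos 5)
Your proof is correct, and your ``only if'' direction takes a genuinely different route from the paper's. The paper extracts left counitality of $\Delta=\beta^{-1}(-\otimes 1)$ from left normality itself: invertibility of $\lambda_A$ forces $\overline{a\odot b}=\overline{1_A\odot ab}$ in $(A\odot A)/(A\odot A)A^+$, which, combined with the identity $\overline{a\odot b}=\overline{1_A\odot \counit(a\sweedler{1})a\sweedler{2}b}$ coming from $\beta\circ\beta^{-1}=\id$, yields $\counit(a\sweedler{1})a\sweedler{2}=a$, after which \cref{prop:gabi_algebra_tricocycloid_sufficient_hopf} applies. You instead observe that \eqref{eq:gabi_algebra_antipode_axioms} together with associativity gives $\mu_A\circ\beta=\counit\otimes\id_A$, so that evaluating this on $\beta^{-1}(a\otimes 1)$ produces left counitality with no reference to $\lambda$ at all. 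This two-line identity is correct, and it genuinely strengthens the result: it shows that for a gabi-algebra the invertibility of $\beta$ alone already forces $A$ to be a Hopf algebra, hence (by the ``if'' direction) left normality comes for free once associative normality holds; it also renders the left-counitality hypothesis of \cref{prop:gabi_algebra_tricocycloid_sufficient_hopf} redundant and trivialises the corresponding verifications in \cref{fdinvanti} and \cref{prop:commutative_gabi_are_hopf}. What the paper's argument buys is a transparent accounting of exactly where each normality condition enters; what yours buys is the sharper statement, which you correctly draw out in your last paragraph, that for gabi-algebras the conditions ``$\beta$ invertible'', ``normal-closed'' and ``Hopf'' all coincide, refining \cref{thm:assnorm} and \cref{cor:urka}. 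Your ``if'' direction (identifying $\delta$ with $a\mapsto a\sweedler{1}\otimes\sigma(a\sweedler{2})$ via $\beta^{-1}(x\otimes y)=x\sweedler{1}\otimes x\sweedler{2}y$, coassociativity and the antipode axiom, and then quoting the closed monoidal structure of Hopf module categories together with \cref{prop:bijection_skew_closed_monoidal_structures}) is a correct expansion of what the paper dismisses as clear.
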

	
	\begin{proof}
	Clearly, if the gabi-algebra structure on $A$ comes from a Hopf algebra structure, then the claim in the statement holds. Thus, let us focus on the other implication.
	Recall from \cref{prop:bijection_skew_closed_monoidal_structures} that the closed structure on $\hmodM[A]$ is left normal if and only if every component 
	\[
      \lambda_M \colon (A \odot M)/(A \odot M)A^+ \to M,
      \quad \overline{a \odot m} \mapsto a m ,
    \]
	 of the left unit constraint from \eqref{eq:leftunitor}	is an isomorphism. Since, by hypothesis, $\beta$ is invertible with inverse $\beta^{-1}(a \otimes b) = a\sweedler{1} \otimes a\sweedler{2}b$ for all $a,b \in A$, we know that
	\[\overline{a \odot m} = \overline{{a\sweedler{1}}_+ \odot {a\sweedler{1}}_-a\sweedler{2}m} = \overline{1_A \odot \counit(a\sweedler{1})a\sweedler{2}m}\]
	for all $a \in A$, $m \in M$. In particular, if $\lambda_A$ is invertible, then
	\[\overline{1_A \odot ab} = \overline{a \odot b} = \overline{1_A \odot \counit(a\sweedler{1})a\sweedler{2}b}\]
	and so $ab = \counit(a\sweedler{1})a\sweedler{2}b$ for all $a,b \in A$. The conclusion now follows from \cref{prop:gabi_algebra_tricocycloid_sufficient_hopf}.
	\end{proof}

We are now ready to formulate the main theorem of this paper, the proof of which now follows directly from the result above.

\begin{theorem}\label{cor:urka}
Let $A$ be an algebra. Then there is a bijective correspondence between normal gabi-algebra structures on $A$ and Hopf algebra structures on $A$.

In other words, $\hmodM[A]$ is a normal closed category with strictly closed forgetful functor $\hmodM[A] \to \hmodM[k]$ if and only if $\hmodM[A]$ is closed monoidal and the forgetful functor $\hmodM[A] \to\hmodM[k]$ is strictly closed monoidal.
\end{theorem}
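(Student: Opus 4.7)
The plan is to assemble the theorem directly from \cref{thm:assnorm} and \cref{prop:leftnorm}, which together convert the external normality conditions on the lifted closed structure into the internal algebraic data of a Hopf structure. Unfolding definitions, a normal gabi-algebra is a gabi-algebra whose lifted skew-closed structure on $\hmodM[A]$ satisfies all three normality conditions; right normality is automatic (as noted at the end of \cref{ssec:defandex}), so only associative and left normality contribute.

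For the direction \emph{normal gabi $\To$ Hopf}, I would apply \cref{thm:assnorm} to translate associative normality into the invertibility of the canonical map $\beta(a\otimes b)=a_+\otimes a_- b$, and then \cref{prop:leftnorm} to promote left normality into the conclusion that $A$ is a Hopf algebra with $\Delta(a)\coloneqq \beta^{-1}(a\otimes 1)$, counit $\counit$, and antipode $\sigma(a)=\counit(a_+)a_-$.

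For the converse \emph{Hopf $\To$ normal gabi}, \cref{ex:hopf_algebras_are_gabi} provides the gabi structure $\delta(a)=a\sweedler{1}\otimes S(a\sweedler{2})$ on any Hopf algebra $A$; its canonical map $\beta$ is invertible with inverse $a\otimes b\mapsto a\sweedler{1}\otimes a\sweedler{2}b$ (by the antipode axiom), so \cref{thm:assnorm} delivers associative normality, and left normality follows from \cref{prop:leftnorm} together with the classical fact that the closed monoidal structure on $\hmodM[A]$ recalled in the introduction is normal-closed via \cref{prop:bijection_skew_closed_monoidal_structures}.

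The bijectivity of the two constructions is the last step I would address. The round-trip Hopf $\to$ gabi $\to$ Hopf is immediate: $\beta^{-1}(a\otimes 1)=\Delta(a)$ and $\sigma(a)=\counit(a\sweedler{1})S(a\sweedler{2})=S(a)$ by counitality. The reverse round-trip is where I expect the main bookkeeping: starting from a normal gabi $(\delta,\counit)$, the Hopf algebra of \cref{prop:leftnorm} induces a new gabi $\delta'(a)=a\sweedler{1}\otimes\sigma(a\sweedler{2})$, and one must check $\delta=\delta'$. Applying $\beta^{-1}$, using $\beta^{-1}(a\otimes b)=a\sweedler{1}\otimes a\sweedler{2}b$, coassociativity of $\Delta$, and the antipode axiom yields $\beta^{-1}(\delta'(a))=a\sweedler{1}\otimes a\sweedler{2}\sigma(a\sweedler{3}) = a\sweedler{1}\counit(a\sweedler{2})\otimes 1 = a\otimes 1 = \beta^{-1}(\delta(a))$, whence $\delta=\delta'$ by invertibility of $\beta$. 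The equivalent reformulation of the theorem then follows since, by the classical Tannaka-Kre\u\i n theory recalled in the introduction, Hopf algebra structures on $A$ are precisely what classifies strict closed monoidal lifts of the structure of $\hmodM[k]$ along the forgetful functor $\hmodM[A]\to\hmodM[k]$.
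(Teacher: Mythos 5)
Your proposal is correct and follows essentially the same route as the paper, which deduces the theorem directly from \cref{thm:assnorm} and \cref{prop:leftnorm} (with \cref{ex:hopf_algebras_are_gabi} supplying the Hopf-to-gabi direction). Your explicit round-trip verification that the two constructions are mutually inverse is a welcome addition that the paper leaves implicit.
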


\begin{remark}
One could wonder under which conditions the last theorem can be extended to the more general setting of algebras in a closed monoidal category (possibly different from $\hmodM[k]$). To this end, let us make the following observation, which allows us to recover the above result in such a more general setting. Consider two (strong) closed monoidal categories $\mathcal C$ and $\mathcal D$, and let $F:\mathcal C\to\mathcal D$ be an (essentially) surjective-on-objects, strict closed functor. Then we have the following natural isomorphisms:
\begin{align*}
& [F(X\ot Y), FZ] =  F[X\ot Y,Z] \cong F[X,[Y,Z]] \\
 & = [FX,F[Y,Z]] = [FX,[FY,FZ]] \cong [FX\ot FY,FZ],
\end{align*}
where the equalities follow from the fact that the functor $F$ is strict closed, and the isomorphisms follow from the closed monoidal structures of $\mathcal C$ and $\mathcal D$. Since $F$ is surjective on objects and by invoking enriched Yoneda (see, e.g., \cite[\S1.9]{Kelly}), we obtain a natural isomorphism $F(X\ot Y)\cong  FX\ot FY$, so $F$ is also strong monoidal. 

Then, consider a normal gabialgebra $A$ in a closed monoidal category $\mathcal D$, let $\mathcal C$ be the closed monoidal category of $A$-modules (see \cref{prop:skew-monoidal-structure-of-gabi})  and $F:\hmodM[A]\to \mathcal D$ the forgetful functor. We showed that modules are skew closed monoidal, and normality of $A$ means that it is closed monoidal. Moreover $F$ is strict closed by construction and essentially surjective on objects since thanks to the augmentation map $\counit$, we can endow any object of $\mathcal D$ with a trivial $A$-module structure. Then by the above $F$ is strong monoidal and therefore usual Tannaka-Krein duality implies that $A$ is Hopf.
\end{remark}


\addtocontents{toc}{\SkipTocEntry}
\section*{Acknowledgements}  

Johannes Berger worked as a postdoctoral researcher within the framework of the ARC project ``From algebras to combinatorics, and back'' while collaborating on this project.

Paolo Saracco is a Charg\'e de Recherches of the Fonds de la Recherche Scientifique - FNRS and a member of the ``National Group for Algebraic and Geometric Structures and their Applications'' (GNSAGA-INdAM).

Joost Vercruysse thanks the Fédération Wallonie-Bruxelles for support via the aforementioned ARC project.

The authors thank Gabriella B\"ohm for the interesting discussions and for her contribution \cite{Boehm-private} to the project.


\pagebreak

\appendix

\section{Mixed liftings}\label{appendix}
  Consider categories $\cat[D], \cat[E]$, and $\cat$.
  On them, consider a comonad $(V, \Delta, \counit)$ and monads $(W, m^W, u^W)$ and $(T,
  m^T, u^T)$, respectively.
  Now let $F \colon \cat[D] \times \cat[E] \to \cat$ be a functor.
  The question is:
  when does this lift to a functor $\cat[D]^V \times \cat[E]^W \to \cat^T$?

  To answer it, let us introduce the following notion.

  \begin{definition}
    A natural transformation of the form $\nu_{X, Y} \colon TF(VX, Y) \to F(X, WY)$ is
    said to satisfy the \emph{lifting property of $F$ with respect to $(V, W, T)$} if the
    the following hold:
    \begin{itemize}[leftmargin=0.8cm]
      \item
        \emph{mixed (co)unitality:} 
        \begin{equation*}
          \begin{tikzcd}
            {F(VX, Y)} 
            \ar[r, "{u^T_{F(VX, Y)}}"] 
            \ar[dr, swap, "{F(\counit_X, u^W_Y)}"]
            & {TF(VX, Y)} \ar[d, "\nu_{X, Y}"]
            \\
            & {F(X, WY)}
          \end{tikzcd}
        \end{equation*}
        and
      \item
        \emph{mixed (co)multiplicativity:}
        \begin{equation*}
          \begin{tikzcd}[column sep=large]
            {T^2F(VX, Y)} \ar[rrr, "{m^T_{F(VX, Y)}}"] 
            \ar[d, swap, "{T^2 F(\Delta_X, Y)}"]
            & & & {TF(VX, Y)} \ar[d, "\nu_{X, Y}"]
            \\
            {T^2 F(V^2 X, Y)} \ar[r, swap, "{T \nu_{VX, Y}}"]
            & {TF(VX, WY)} \ar[r, swap, "\nu_{X, WY}"]
            & {F(X, W^2 Y)} \ar[r, swap, "{F(X, m^W_Y)}"]
            & {F(X, WY)}
          \end{tikzcd}
          \ .
        \end{equation*}
    \end{itemize}
  \end{definition}

  Similar to the usual lifting theorem for monads, we can use these to classify liftings
  in our current situation.

  \begin{theorem}
    \label{prop:mixed_liftings}
    Let $\cat, \cat[D], \cat[E], V, W, T,$ and $F$ be as above.
    The liftings of $F$ are in one-to-one correspondence with natural transformations
    satisfying the lifting property of $F$ w.r.t.\ $(V, W, T)$.

    More precisely, if $\nu$ satisfies the lifting property, and $((X, x), (Y, y)) \in
    \cat[D]^V \times \cat[E]^W$, then $F(X, Y)$ is equipped with the $T$-action 
    \begin{align*}
      \triangleright^\nu_{x, y}
      = TF(X, Y) \xrightarrow{TF(x, Y)}
      TF(VX, Y) \xrightarrow{\nu_{X, Y}}
      F(X, WY) \xrightarrow{F(X, y)}
      F(X, Y)
      \ .
    \end{align*}
    The action on morphisms is determined by $F$.
    Conversely, if the functor lifts, i.e.\ every pair of (co)actions $x, y$ lifts to an
    action $\triangleright_{x, y} \colon TF(X, Y) \to F(X, Y)$, then the natural
    transformation $\nu^{\triangleright}_{X, Y}$ satisfying the lifting property is
    obtained as the composition
    \begin{align*}
      TF(VX, Y)
      \xrightarrow{TF(VX, u^W_Y)} TF(VX, WY)
      & \xrightarrow{\triangleright_{\Delta_X, m^W_Y}} F(VX, WY)
      \xrightarrow{F(\counit_X, WY)} F(X, WY)
      \ .
    \end{align*}
  \end{theorem}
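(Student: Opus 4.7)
The plan is to construct both directions of the correspondence explicitly, then check mutual invertibility using naturality of $\nu$ together with the (co)algebra axioms and the (co)unit/(co)multiplication laws of $V$, $W$, $T$.

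First, assume $\nu$ satisfies the lifting property. For $(X,x)\in\cat[D]^V$ and $(Y,y)\in\cat[E]^W$, I would verify that $\triangleright^\nu_{x,y}$ is a genuine $T$-algebra on $F(X,Y)$. Unitality $\triangleright^\nu_{x,y}\circ u^T_{F(X,Y)}=\id$ follows by naturality of $u^T$ together with the mixed unitality triangle of $\nu$, the coalgebra counit axiom $\counit_X\circ x=\id_X$, and the algebra unit axiom $y\circ u^W_Y=\id_Y$. Associativity $\triangleright^\nu_{x,y}\circ T\triangleright^\nu_{x,y}=\triangleright^\nu_{x,y}\circ m^T_{F(X,Y)}$ is obtained by pasting the mixed multiplicativity square of $\nu$ along the pair $(x,y)$ against the coassociativity $Vx\circ x=\Delta_X\circ x$, the associativity $y\circ m^W_Y=y\circ Wy$, and naturality of $\nu$ in both variables. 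Functoriality on morphisms of (co)algebras is immediate from bifunctoriality of $F$ and naturality of $\nu$.

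Conversely, given a lifting, the formula for $\nu^\triangleright_{X,Y}$ stated in the theorem is natural in $X$ and $Y$ by functoriality of the lifting combined with naturality of $\Delta$, $\counit$, $u^W$. Mixed unitality and mixed multiplicativity of $\nu^\triangleright$ follow from the unit and multiplication axioms for the specific $T$-action $\triangleright_{\Delta_X,m^W_Y}$ on the cofree-free pair $(VX,\Delta_X),(WY,m^W_Y)$, together with naturality.

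The main obstacle, and the real content, is the mutual inversion. For $\nu^{\triangleright^\nu}=\nu$, I would expand
\[\triangleright^\nu_{\Delta_X,m^W_Y}=F(VX,m^W_Y)\circ\nu_{VX,WY}\circ TF(\Delta_X,WY)\]
inside the formula for $\nu^{\triangleright^\nu}_{X,Y}$, then use naturality of $\nu$ to push $F(\counit_X,-)$ and $F(-,u^W_Y)$ through to the other side, where the comonad counit law $\counit_{VX}\circ\Delta_X=\id_{VX}$ and the monad unit law $m^W_Y\circ Wu^W_Y=\id_{WY}$ collapse the composite to $\nu_{X,Y}$. For $\triangleright^{\nu^\triangleright}_{x,y}=\triangleright_{x,y}$, the key observation is that $x\colon(X,x)\to(VX,\Delta_X)$ is a morphism of $V$-coalgebras and $y\colon(WY,m^W_Y)\to(Y,y)$ is a morphism of $W$-algebras (since $Vx\circ x=\Delta_X\circ x$ and $y\circ m^W_Y=y\circ Wy$ are precisely the (co)associativity axioms). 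Applying $T$-equivariance of the lifting along the morphism $F(x,y)$ expresses $\triangleright_{x,y}$ through the action at the (co)free pair, and the splittings $\counit_X\circ x=\id_X$, $y\circ u^W_Y=\id_Y$ then identify the resulting composite with $\triangleright^{\nu^\triangleright}_{x,y}$. The bookkeeping here is where I expect the most care is needed, because several uses of bifunctoriality of $F$ and naturality of the structural transformations must be interleaved in the correct order.
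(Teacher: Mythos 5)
Your proposal is correct, and its overall architecture --- explicit constructions in both directions, verification of the $T$-algebra axioms and of functoriality, verification of the lifting property for $\nu^\triangleright$, and the two mutual-inversion computations --- coincides with the paper's. The direction $\nu^{\triangleright^\nu}=\nu$ is handled identically (expand, push $F(\counit_X,-)$ and $F(-,u^W_Y)$ through $\nu$ by naturality, collapse with the comonad counit and monad unit laws). The genuine difference is in which equivariances of the lifting you exploit in the converse direction. The paper factors these steps through a preparatory lemma asserting that $F(\counit_X,Y)$ and $F(X,u^W_Y)$ intertwine the lifted $T$-actions, on the grounds that $\counit_X\colon VX\to X$ and $u^W_Y\colon Y\to WY$ are (co)algebra morphisms; you instead use equivariance along $x\colon(X,x)\to(VX,\Delta_X)$ and $y\colon(WY,m^W_Y)\to(Y,y)$. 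Your choice is the more robust one: $x$ and $y$ genuinely are morphisms of $V$-coalgebras and $W$-algebras (they are the unit, resp.\ counit, of the (co)free adjunctions, the required identities being exactly the (co)associativity axioms, as you observe), whereas $\counit_X$ and $u^W_Y$ are merely splittings of them in the base categories and are not (co)algebra morphisms in general, so the intermediate identities in the paper's route only become valid after the compensating post-compositions with $F(X,y)$, $F(\counit_X,Y)$, etc. Concretely, equivariance along $(x,\id_{WY})$ and $(\id_X,y)$ gives
\begin{gather*}
F(x,WY)\circ\triangleright_{x,m^W_Y}=\triangleright_{\Delta_X,m^W_Y}\circ TF(x,WY),\qquad
F(X,y)\circ\triangleright_{x,m^W_Y}=\triangleright_{x,y}\circ TF(X,y),
\end{gather*}
and combining these with $\counit_X\circ x=\id_X$ and $y\circ u^W_Y=\id_Y$ collapses $\triangleright^{\nu^\triangleright}_{x,y}$ to $\triangleright_{x,y}$ exactly as you describe. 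The one place where your outline underestimates the work is the mixed (co)multiplicativity of $\nu^\triangleright$: there the actions $\triangleright_{\Delta_X,m^W_{WY}}$ and $\triangleright_{\Delta_{VX},m^W_Y}$ at \emph{different} (co)free pairs enter, and relating them to $\triangleright_{\Delta_X,m^W_Y}$ needs, besides the multiplication axiom at a single pair and naturality of the structural transformations, the $T$-equivariance of $F(f,g)$ along (co)algebra morphisms between (co)free objects. That ingredient appears elsewhere in your proposal, so this is a matter of bookkeeping rather than a gap, but it should be invoked explicitly at that step.
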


  Before proving \Cref{prop:mixed_liftings}, let us record the following useful lemma.

  \begin{lemma}
    \label{prop:mixed_liftings_lemma}
    Let everything be as in \Cref{prop:mixed_liftings}, and assume that $F$ lifts.
    Also, let $(X, x)$ and $(Y, y)$ be a $V$-coalgebra and a $W$-algebra, respectively.
    Then
    \begin{enumerate}[leftmargin=0.8cm]
      \item 
        $
          F(\counit_X, Y) \in \cat^T 
          \big(
            \left(
              F(VX, Y),
              \triangleright_{\Delta_X, y}
            \right),\
            \left(
              F(X, Y), \triangleright_{x, y}
            \right)
          \big)
          $

      \item 
        $
          F(X, u^W_Y) \in \cat^T 
          \big(
            \left(
              F(X, Y),
              \triangleright_{x, y}
            \right),\
            \left(
              F(X, WY), \triangleright_{x, m^W_Y}
            \right)
          \big)
          $

      \item
        The natural transformation built from the lifting satisfies
        \begin{align*}
          \nu^\triangleright_{X, Y} 
          = F(\counit_X, u^W_Y) \circ \triangleright_{\Delta_X, y}
          = F(X, u^W_Y) \circ \triangleright_{x, y} \circ TF(\counit^X, Y)
          = \triangleright_{x, m^W_Y} \circ TF(\counit_X, u^W_Y)
        \end{align*}
    \end{enumerate}
  \end{lemma}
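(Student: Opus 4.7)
The strategy is to leverage the functoriality of the lifting $\hat F \colon \cat[D]^V \times \cat[E]^W \to \cat^T$ together with the (co)unit axioms of $V$ and $W$. The key observation is that although the counit $\counit_X$ and the monad unit $u^W_Y$ are not themselves (co)algebra morphisms, they are one-sided (co)inverses of the structure maps $x \colon X \to VX$ and $y \colon WY \to Y$; and these latter \emph{are} morphisms in $\cat[D]^V$, respectively $\cat[E]^W$. Indeed, $x$ is a $V$-coalgebra morphism $(X, x) \to (VX, \Delta_X)$ by the coassociativity axiom, and $y$ is a $W$-algebra morphism $(WY, m^W_Y) \to (Y, y)$ by the associativity axiom. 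The lift $\hat F$ therefore sends these to $T$-linear maps, and the (co)unit identities $\counit_X \circ x = \id_X$ and $y \circ u^W_Y = \id_Y$ provide the essential ``retraction'' needed to manoeuvre around the fact that $\counit_X$ and $u^W_Y$ are not themselves morphisms in $\cat[D]^V$, $\cat[E]^W$.

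For part (1), I plan to begin from the $T$-linearity of $F(x, Y) \colon (F(X, Y), \triangleright_{x, y}) \to (F(VX, Y), \triangleright_{\Delta_X, y})$, a direct consequence of applying $\hat F$ to the coalgebra morphism $x$. Pre-composing with $F(\counit_X, Y)$ and simplifying via $F(\counit_X, Y) \circ F(x, Y) = F(\counit_X \circ x, Y) = \id_{F(X, Y)}$ yields an expression $\triangleright_{x, y} = F(\counit_X, Y) \circ \triangleright_{\Delta_X, y} \circ TF(x, Y)$. To extract from this the desired identity $F(\counit_X, Y) \circ \triangleright_{\Delta_X, y} = \triangleright_{x, y} \circ TF(\counit_X, Y)$, I would then exploit the naturality of the assignment $(Z, z) \mapsto \triangleright_{z, y}$ with respect to the coalgebra morphisms $\Delta_X \colon (VX, \Delta_X) \to (V^2 X, \Delta_{VX})$ and $V\counit_X \colon (V^2 X, \Delta_{VX}) \to (VX, \Delta_X)$, and invoke the comonad identity $V\counit_X \circ \Delta_X = \id_{VX}$ to collapse the resulting composite. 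Part (2) proceeds by the symmetric argument, using $y$ in place of $x$ and $u^W_Y$ as a one-sided section of $y$, closed by the monad identity $m^W_Y \circ W u^W_Y = \id_{WY}$.

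For part (3), the three alternative expressions for $\nu^\triangleright_{X, Y}$ will be obtained by direct application of parts (1) and (2) to the definition
\[
\nu^\triangleright_{X, Y} = F(\counit_X, WY) \circ \triangleright_{\Delta_X, m^W_Y} \circ TF(VX, u^W_Y).
\]
Concretely, the third equality arises by commuting $F(\counit_X, WY)$ past $\triangleright_{\Delta_X, m^W_Y}$ via part (1) applied to the algebra $(WY, m^W_Y)$, and then merging $TF(\counit_X, WY) \circ TF(VX, u^W_Y) = TF(\counit_X, u^W_Y)$ by bifunctoriality of $F$. The first equality is obtained analogously by commuting $F(VX, u^W_Y)$ past $\triangleright_{\Delta_X, m^W_Y}$ via part (2) applied to the coalgebra $(VX, \Delta_X)$, yielding the merged morphism $F(\counit_X, u^W_Y) = F(\counit_X, WY) \circ F(VX, u^W_Y)$ in front of $\triangleright_{\Delta_X, y}$. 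The middle equality then interpolates: substituting the just-proved first equality into the definition of $\nu^\triangleright$ and invoking part (1) on $(Y, y)$ relocates $F(\counit_X, -)$ past the action.

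The principal obstacle is the elimination step within the proofs of parts (1) and (2): since the functoriality of $\hat F$ does not apply directly to $\counit_X$ and $u^W_Y$, the argument must delicately combine it with the (co)unit axioms and the coassociativity/associativity of $x$ and $y$. Once these two intertwining identities are secured, part (3) follows by a transparent bookkeeping of compositions.
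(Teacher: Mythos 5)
You are right to be suspicious of $\counit_X$ and $u^W_Y$: they are indeed not morphisms of (co)algebras, and on this point you are more careful than the paper, whose entire proof of (1) is the assertion that $\counit_X\colon(VX,\Delta_X)\to(X,x)$ \emph{is} a morphism of $V$-coalgebras — that would require $x\circ\counit_X=\id_{VX}$, which fails unless $x$ is invertible. However, your repair does not close the gap. What your first step genuinely yields is the composite identity $\triangleright_{x,y}=F(\counit_X,Y)\circ\triangleright_{\Delta_X,y}\circ TF(x,Y)$, and the subsequent ``extraction'' is exactly where the argument breaks: substituting this into $\triangleright_{x,y}\circ TF(\counit_X,Y)$ leaves the stray factor $TF(x\circ\counit_X,Y)=TF(\counit_{VX}\circ Vx,Y)$, and the naturality you propose to invoke (the $T$-linearity of $F(\Delta_X,Y)$ and $F(V\counit_X,Y)$ together with $V\counit_X\circ\Delta_X=\id_{VX}$) composes only to the tautology $\triangleright_{\Delta_X,y}=\triangleright_{\Delta_X,y}$; it never relates $\counit_{VX}$ to $V\counit_X$. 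The dual problem afflicts your proof of (2), and (3) inherits it.

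No refinement can succeed, because parts (1) and (2) — and hence the three displayed formulas in (3) — are false in this generality. Take the situation of \cref{prop:mixed_liftings_corollary} with $\cat=\hmodM[k]$, $T=A\tensor-$ for $A=kG$ a nontrivial group algebra, $F=\hmodM[k](-,-)$ and the classical Hopf lifting $(a.f)(m)=a\sweedler{1}f(S(a\sweedler{2})m)$. Here $\counit_X=u_X\colon X\to A\tensor X$, $x\mapsto 1\tensor x$, and $F(\counit_X,Y)$ is precomposition with $u_X$. For $X=Y={}_AA$, $g\in G\setminus\{1\}$ and $f\colon A\tensor A\to A$ the $k$-linear map $b\tensor m\mapsto\pi(b)m$ (with $\pi$ the augmentation of $kG$), one finds $\big((g.f)\circ u_X\big)(m)=gm$ while $\big(g.(f\circ u_X)\big)(m)=m$, so $F(\counit_X,Y)$ does not intertwine $\triangleright_{\Delta_X,y}$ with $\triangleright_{x,y}$. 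What does survive is precisely what you derive along the way: the two composites $\triangleright_{x,y}=F(\counit_X,Y)\circ\triangleright_{\Delta_X,y}\circ TF(x,Y)$ and $\triangleright_{x,y}=F(X,y)\circ\triangleright_{x,m^W_Y}\circ TF(X,u^W_Y)$, plus the genuine $T$-linearity of $F(x,Y)$, $F(X,y)$ and of $F$ applied to morphisms of (co)free objects. These weaker statements are also all that the proof of \cref{prop:mixed_liftings} actually uses — in the verification of $\triangleright^{\nu^\triangleright}_{x,y}=\triangleright_{x,y}$, for instance, the maps $F(\counit_X,-)$ and $F(-,u^W_Y)$ only ever occur with $TF(x,Y)$ attached, and the chain goes through once one commutes $F(x,WY)$ (a legitimate morphism of $T$-algebras) past the action instead of $F(\counit_X,WY)$. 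So the correct course is not to prove the lemma as stated but to replace it by these composite identities.
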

  \begin{proof}
    (1) follows immediately, since $F$ lifts by assumption and $\counit_X \colon (VX,
    \Delta_X) \to (X, x)$ as $V$-comodules.
    (2) is completely analogous.
    Finally, (3) follows from appropriate applications of (1) and (2).
  \end{proof}

  \begin{proof}[Proof of \Cref{prop:mixed_liftings}]
    We first establish the bijection.
    Let $\nu$ be given.
    Then
    \begin{align*}
      \nu^{\triangleright^\nu}_{X, Y}
      &= 
      F(\counit_X, W Y) 
      \circ \triangleright^\nu_{\Delta_X, m^W_Y}
      \circ TF (VX, u^W_Y)
      \\ &=
      F(\counit_X, W Y) 
      \circ F(VX, m^W_Y)
      \circ \nu_{VX, WY}
      \circ TF(\Delta_X, WY)
      \circ TF (VX, u^W_Y)
      \\ &=
      F(\counit_X, W Y) 
      \circ F(VX, m^W_Y)
      \circ \nu_{VX, WY}
      \circ TF (V^2 X, u^W_Y)
      \circ TF(\Delta_X, Y)
      \\ &=
      F(\counit_X, W Y) 
      \circ F(VX, m^W_Y)
      \circ TF (VX, W u^W_Y)
      \circ \nu_{VX, Y}
      \circ TF(\Delta_X, Y)
      \\ &=
      F(\counit_X, W Y) 
      \circ \nu_{VX, Y}
      \circ TF(\Delta_X, Y)
      \\ &=
      \nu_{X, Y}
      \circ TF(V \counit_X, Y) 
      \circ TF(\Delta_X, Y)
      \\ &=
      \nu_{X, Y}
      \ ,
    \end{align*}
    where we have used nothing but functoriality and the (co)unitality axioms.

    On the other hand, given $\triangleright$, we have
    \begin{align*}
      \triangleright^{\nu^\triangleright}_{x, y}
      &=
      F(X, y) 
      \circ \nu^\triangleright_{X, Y}
      \circ TF(x, Y)
      \\ &=
      F(X, y) 
      \circ F(\counit_X, WY)
      \circ \triangleright_{\Delta_X, m^W_Y}
      \circ TF(VX, u^W_Y)
      \circ TF(x, Y)
      \\ &\oversetEq[(i)]
      F(X, y) 
      \circ F(\counit_X, WY)
      \circ F(VX, u^W_Y)
      \circ \triangleright_{\Delta_X, y}
      \circ TF(x, Y)
      \\ &\oversetEq
      F(\counit_X, Y)
      \circ \triangleright_{\Delta_X, y}
      \circ TF(x, Y)
      \\ &\oversetEq[(ii)]
      \triangleright_{x, y}
      \circ TF(\counit_X, Y)
      \circ TF(x, Y)
      \\ &\oversetEq
      \triangleright_{x, y}
      \ .
    \end{align*}
    Both $(i)$ and $(ii)$ use \Cref{prop:mixed_liftings_lemma}.
    The other steps are again naturality and the (co)unitality axioms.

    Thus the bijection is established, and we are left with showing that everything is
    well-defined.
    Let first $\nu$ be given.
    We check unitality of $\triangleright^\nu$:
    \begin{align*}
      \triangleright^\nu_{x, x} \circ u^T_{F(X, Y)}
      &=
      F(X, y)
      \circ \nu_{X, Y}
      \circ TF(x, Y)
      \circ u^T_{F(X, Y)}
      \\ &=
      F(X, y)
      \circ \nu_{X, Y}
      \circ u^T_{F(VX, Y)}
      \circ F(x, Y)
      \\ &\oversetEq[(*)]
      F(X, y)
      \circ F(\counit_X, u^W_Y)
      \circ F(x, Y)
      \\ &=
      F(X, Y)
      \ ,
    \end{align*}
    where the only non-trivial step $(*)$ uses the mixed (co)unitality of $\nu$.
    To see the associativity of $\triangle^\nu$, we compute
    \begin{align*}
      \triangleright^\nu_{x, y}
      \circ m^T_{F(X, Y)}
      &=
      F(X, y)
      \circ \nu_{X, Y}
      \circ TF(x, Y)
      \circ m^T_{F(X, Y)}
      \\ &=
      F(X, y)
      \circ \nu_{X, Y}
      \circ m^T_{F(VX, Y)}
      \circ T^2 F(x, Y)
      \\ &\oversetEq[(i)]
      F(X, y)
      \circ F(X, m^W_Y)
      \circ \nu_{X, WY}
      \circ T \nu_{VX, Y}
      \circ T^2 F(\Delta_X, Y)
      \circ T^2 F(x, Y)
      \\ &\oversetEq[(ii)]
      F(X, y)
      \circ F(X, W y)
      \circ \nu_{X, WY}
      \circ T \nu_{VX, Y}
      \circ T^2 F(V x, Y)
      \circ T^2 F(x, Y)
      \\ &\oversetEq[(iii)]
      F(X, y)
      \circ F(X, W y)
      \circ \nu_{X, WY}
      \circ T F(x, W Y)
      \circ T \nu_{X, Y}
      \circ T^2 F(x, Y)
      \\ &\oversetEq[(iv)]
      F(X, y)
      \circ \nu_{X, Y}
      \circ T F(V X, y)
      \circ T F(x, W Y)
      \circ T \nu_{X, Y}
      \circ T^2 F(x, Y)
      \\ &\oversetEq
      F(X, y)
      \circ \nu_{X, Y}
      \circ T F(x, Y)
      \circ T 
      \big(
        F(X, y)
        \circ \nu_{X, Y}
        \circ T F(x, Y)
      \big)
      \\ &\oversetEq
      \triangleright^\nu_{x, y}
      \circ T \triangleright^\nu_{x, y}
      \ ,
    \end{align*}
    as desired.
    Here, $(i)$ uses mixed (co)associativity, $(ii)$ uses (co)associativity of (co)monad
    (co)actions, and $(iii)$ and $(iv)$ use naturality of $\nu$.

    Finally we need to check that for $(f, g) \colon ((X, x), (Y, y)) \to ((X', x'), (Y',
    y'))$, the morphism $F(f, g)$ indeed intertwines the $T$-actions.
    We have
    \begin{align*}
      F(f, g) \circ \triangleright^\nu_{x, y}
      &=
      F(f, g) 
      \circ F(X, y)
      \circ \nu_{X, Y}
      \circ TF(x, Y)
      \\ &=
      F(X', y')
      \circ F(f, W g) 
      \circ \nu_{X, Y}
      \circ TF(x, Y)
      \\ &=
      F(X', y')
      \circ \nu_{X', Y'}
      \circ TF(X f, g) 
      \circ TF(x, Y)
      \\ &=
      F(X', y')
      \circ \nu_{X', Y'}
      \circ TF(x', Y')
      \circ TF(f, g) 
      \\ &=
      \triangleright^\nu_{x', y'}
      \circ TF(f, g)
      \ ,
    \end{align*}
    exactly as needed.

    So now assume that $F$ lifts.
    In particular, the actions $\triangleright$ are given.
    We check the mixed (co)unitality of $\nu^\triangleright$:
    \begin{align*}
      \nu^\triangleright_{X, Y} 
      \circ u^T_{F(VX, Y)}
      &= 
      F(\counit_X, WY)
      \circ \triangleright_{\Delta_X, m^W_Y}
      \circ TF(VX, u^W_Y)
      \circ u^T_{F(VX, Y)}
      \\ &=
      F(\counit_X, WY)
      \circ \triangleright_{\Delta_X, m^W_Y}
      \circ u^T_{F(VX, WY)}
      \circ F(VX, u^W_Y)
      \\ &=
      F(\counit_X, u^W_Y)
      \ ,
    \end{align*}
    using unitality of $T$-actions.
    For the mixed (co)multiplicativity, we compute
    \begin{align*}
      &
      F(X, m^W_Y)
      \circ \nu^\triangleright_{X, WY}
      \circ T \nu^\triangleright_{VX, Y}
      \circ T^2 F(\Delta_X, Y)
      \\ &\oversetEq[(\star)]
      F(X, m^W_Y)
      \circ F(X, u^W_{WY})
      \circ \triangleright_{x, m^W_Y}
      \circ TF(\counit_X, WY)
      \circ T \nu^\triangleright_{VX, Y}
      \circ T^2 F(\Delta_X, Y)
      \\ &\oversetEq[(\star)]
      \triangleright_{x, m^W_Y}
      \circ TF(\counit_X, WY)
      \circ TF(VX, u^W_Y)
      \circ T \triangleright_{\Delta_X, y}
      \circ T^2 F(\counit_{VX}, Y)
      \circ T^2 F(\Delta_X, Y)
      \\ &\oversetEq
      \triangleright_{x, m^W_Y}
      \circ TF(\counit_X, WY)
      \circ TF(VX, u^W_Y)
      \circ T \triangleright_{\Delta_X, y}
      \\ &\oversetEq[(\star)]
      F(\counit_X, u^W_Y)
      \circ \triangleright_{\Delta_X, y}
      \circ T \triangleright_{\Delta_X, y}
      \\ &\oversetEq
      F(\counit_X, u^W_Y)
      \circ \triangleright_{\Delta_X, y}
      \circ m^T_{F(VX, Y)}
      \\ &\oversetEq[(\star)]
      \nu^\triangleright_{X, Y}
      \circ m^T_{F(VX, Y)}
      \ .
    \end{align*}
    Here, \Cref{prop:mixed_liftings_lemma} was used in all steps marked $(\star)$.

    Finally, we show that $\nu^\triangleright$ is natural.
    Let $(f, g) \colon (X, Y) \to (X', Y')$ in $\cat[D] \times \cat[E]$.
    Then
    \begin{align*}
      \nu^\triangleright_{X', Y'} \circ TF(Vf, g)
      &=
      F(\counit_{X'}, WY')
      \circ \triangleright_{\Delta_{X'}, m^W_{Y'}}
      \circ TF(VX', u^W_{Y'})
      \circ TF(Vf, g)
      \\ &=
      F(\counit_{X'}, WY')
      \circ \triangleright_{\Delta_{X'}, m^W_{Y'}}
      \circ TF(Vf, W g)
      \circ TF(VX, u^W_Y)
      \\ &\oversetEq[(*)]
      F(\counit_{X'}, WY')
      \circ F(Vf, W g)
      \circ \triangleright_{\Delta_{X}, m^W_{Y}}
      \circ TF(VX, u^W_Y)
      \\ &\oversetEq
      F(f, W g)
      \circ F(\counit_{X}, WY)
      \circ \triangleright_{\Delta_{X}, m^W_{Y}}
      \circ TF(VX, u^W_Y)
      \\ &\oversetEq
      F(f, W g) \circ \nu^\triangleright_{X, Y}
      \ ,
    \end{align*}
    where in $(*)$ we used that $(Vf, Wg)$ is a morphism in $\cat[D]^V \times \cat[E]^W$
    of (co)free (co)algebras, and thus $F(Vf, Wg)$ is a morphism in $\cat^T$, intertwining
    the actions $\triangleright_{\Delta_X, m^W_Y}$ and $\triangleright_{\Delta_{X'},
    m^W_{Y'}}$.
    This finishes the proof.
  \end{proof}
  
  Since a monad on a category is the same as a comonad on the opposite category, we
  immediately get the following.

  \begin{corollary}
    \label{prop:mixed_liftings_corollary}
    Let $(T, m, u)$ be a monad on a category $\cat$.
    Then a functor $F \colon \cat\op \times \cat \to \cat$ lifts to a functor $F^\#
    \colon (\cat^T)\op \times \cat^T \to \cat^T$ if and only if there is a natural
    transformation $\nu_{X, Y} \colon T F(TX, Y) \to F(X, TY)$ satisfying
    \begin{align*}
      \nu_{X, Y} \circ u_{F(TX, Y)} = F(u_X, u_Y)
    \end{align*}
    and
    \begin{align*}
      \nu_{X, Y} \circ m_{F(TX, Y)} 
      = F(X, m_Y) \circ \nu_{X, TY} \circ T \nu_{TX, Y} \circ T^2 F(m_X, Y)
      .
    \end{align*}
  \end{corollary}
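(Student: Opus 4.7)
The plan is to deduce this corollary as a direct instance of Theorem~\ref{prop:mixed_liftings} via the standard self-duality between monads and comonads. A monad $(T, m, u)$ on $\cat$ is, tautologically, a comonad on $\cat\op$: the multiplication $m \colon T^2 \to T$ in $\cat$ becomes the comultiplication $\Delta \colon T \to T^2$ in $\cat\op$, and the unit $u \colon \id \to T$ in $\cat$ becomes the counit $\counit \colon T \to \id$ in $\cat\op$. Moreover, the Eilenberg-Moore category $(\cat\op)^T$ of coalgebras for this comonad coincides canonically with $(\cat^T)\op$. This is exactly the setting needed to invoke Theorem~\ref{prop:mixed_liftings}.

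Concretely, I would apply the theorem to $\cat[D] = \cat\op$ with comonad $V = T$, $\cat[E] = \cat$ with monad $W = T$, and target $(\cat, T)$. A lifting of $F \colon \cat\op \times \cat \to \cat$ to $\cat[D]^V \times \cat[E]^W \to \cat^T$ is exactly what the corollary calls a lifting $F^\# \colon (\cat^T)\op \times \cat^T \to \cat^T$. Theorem~\ref{prop:mixed_liftings} then asserts that such liftings correspond bijectively to natural transformations $\nu_{X, Y} \colon TF(VX, Y) \to F(X, WY)$ satisfying mixed (co)unitality and mixed (co)multiplicativity. Under the substitution $V = W = T$ the source and target read $\nu_{X, Y} \colon TF(TX, Y) \to F(X, TY)$, as in the corollary.

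It then remains to verify that the two axioms of Theorem~\ref{prop:mixed_liftings} transcribe exactly to the two displayed equations of the corollary. For mixed (co)unitality, $\counit_X$ becomes $u_X$ and $u^W_Y$ is $u_Y$, yielding $\nu_{X, Y} \circ u_{F(TX, Y)} = F(u_X, u_Y)$; the variances are consistent because $u_X \colon X \to TX$ in $\cat$, regarded as a morphism in $\cat\op$ via the contravariance of $F$ in its first slot, gives $F(u_X, -) \colon F(TX, -) \to F(X, -)$, matching the direction $F(VX, Y) \to F(X, WY)$. For mixed (co)multiplicativity, $\Delta_X$ becomes $m_X$, and $F(\Delta_X, Y) \colon F(VX, Y) \to F(V^2 X, Y)$ from the general statement translates via the same contravariance to $F(m_X, Y) \colon F(TX, Y) \to F(T^2 X, Y)$. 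Plugging these identifications into the multiplicativity diagram of Theorem~\ref{prop:mixed_liftings} reproduces $\nu_{X, Y} \circ m_{F(TX, Y)} = F(X, m_Y) \circ \nu_{X, TY} \circ T \nu_{TX, Y} \circ T^2 F(m_X, Y)$ verbatim.

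The only real obstacle is bookkeeping: one must keep straight how $m$ and $u$ swap roles between $\cat$ and $\cat\op$ (multiplication versus comultiplication, unit versus counit), and how the contravariance of $F$'s first argument interacts with morphisms viewed either in $\cat$ or in $\cat\op$. Once this dictionary is fixed, the corollary is a transparent specialization of Theorem~\ref{prop:mixed_liftings} requiring no additional computation.
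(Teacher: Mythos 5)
Your proposal is correct and follows exactly the paper's route: the paper derives the corollary in one line from \Cref{prop:mixed_liftings} by observing that a monad on $\cat$ is a comonad on $\cat\op$ and that $(\cat\op)^T$ identifies with $(\cat^T)\op$. Your more explicit tracking of how $\counit_X$, $\Delta_X$ become $u_X$, $m_X$ under the contravariance of the first argument of $F$ is just the bookkeeping the paper leaves implicit, and it checks out.
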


\end{document}